\title{Stable Hamiltonian structures in dimension three are supported
  by open books}
\author{K.~Cieliebak and E.~Volkov\footnote{Supported by DFG
    grant CI 45/5-1, FNRS grant FRFC 2.4655.06, and the ESF
    Research Networking Programme {\em Contact and Symplectic Topology
      (CAST)}}}
\theoremstyle{plain}
\newtheorem{theorem}{Theorem}[section]
\newtheorem{corollary}[theorem]{Corollary}
\newtheorem{cor}[theorem]{Corollary}
\newtheorem{proposition}[theorem]{Proposition}
\newtheorem{prop}[theorem]{Proposition}
\newtheorem{lemma}[theorem]{Lemma}
\theoremstyle{remark}
\newtheorem{remark}[theorem]{Remark}
\newtheorem{example}[theorem]{Example}
\newcommand{\id}{{{\mathchoice {\rm 1\mskip-4mu l} {\rm 1\mskip-4mu l}
{\rm 1\mskip-4.5mu l} {\rm 1\mskip-5mu l}}}}
\newcommand{\p}{\partial}
\newcommand{\om}{\omega}
\newcommand{\eps}{\varepsilon}
\newcommand{\la}{\langle}
\newcommand{\ra}{\rangle}
\newcommand{\N}{{\mathbb{N}}}
\newcommand{\Z}{{\mathbb{Z}}}
\newcommand{\R}{{\mathbb{R}}}
\newcommand{\C}{{\mathbb{C}}}
\newcommand{\im}{{\rm im }}        
\newcommand{\st}{{\rm st}}
\newcommand{\const}{{\rm const}}
\renewcommand{\min}{{\rm min}}
\newcommand{\inn}{{\rm int\,}}
\newcommand{\LL}{\mathcal{L}}
\newcommand{\FF}{\mathcal{F}}
\newcommand{\PP}{\mathcal{P}}
\begin{document}

\maketitle

\section{Introduction}\label{sec:intro}

In dimension three, (cooriented) contact structures are closely
related to open books via the following results, the first due to
Thurston-Winkelnkemper~\cite{TW} and the other ones due to
Giroux~\cite{Gir}:

(1) Every open book supports a contact structure.

(2) Any two contact structures supported by the same open book are
  connected by a contact isotopy supported by the open book.

(3) Every contact structure is supported by an open book. 

(4)  Two open books supporting the same contact structure are
  isotopic after finitely many stabilizations. 

In~\cite{CV} we proved analogues of the first two results for stable
Hamiltonian structures in dimension three:

(1') Every open book supports a stable Hamiltonian structure
  realizing a given cohomology class and given signs at the
binding components.     

(2') Any two stable Hamiltonian structures supported by the same open
book in the same cohomology class and with the same signs at
the binding components are connected by a stable homotopy supported by
the open book. 

In this paper we prove the analogue of the third result.  

\begin{theorem}\label{thm:Giroux}
Every stable Hamiltonian structure on a closed oriented $3$-manifold
is stably homotopic to one which is supported by an open book.
\end{theorem}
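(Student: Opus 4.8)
The plan is to reduce, by a stable homotopy, to a normal form for which an adapted open book can be constructed by hand, and then to use (1'), (2') and Giroux's result (3) to organise the argument. Recall that $(\lambda,\omega)$ is \emph{supported} by an open book $(B,\pi)$ when $\omega$ is positive on the interiors of the pages (the fibres of $\pi$) and $\lambda$ has a definite sign on each component of the binding $B$, with the standard model near $B$. By (1') every open book carries such a supported structure in any prescribed cohomology class and with any prescribed binding signs, and by (2') it is unique up to stable homotopy once the class and the signs are fixed. Since a stable homotopy preserves both $[\omega]\in H^2(M;\R)$ and the homotopy class of the cooriented plane field $\xi=\ker\lambda$, it suffices to exhibit, for the given $(\lambda,\omega)$, an open book $(B,\pi)$ whose associated plane field lies in the class of $\xi$, together with a stable homotopy from $(\lambda,\omega)$ to the structure that (1') produces on $(B,\pi)$ with cohomology class $[\omega]$ and appropriate signs.

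To build such a homotopy I would first bring $(\lambda,\omega)$ to a normal form: using what is known about stable Hamiltonian structures on closed oriented $3$-manifolds (partly from \cite{CV}), arrange that $M$ is cut along a finite (possibly empty) family of disjoint embedded tori into pieces on each of which the structure is either \emph{(i)} a contact structure of either sign, or \emph{(ii)} an $\R$-invariant piece --- a surface bundle over $S^1$, or a twisted $S^1$-bundle, with $\omega$ fibrewise symplectic and $\lambda$ a connection $1$-form --- together with the standard collar model near the cutting tori. When the family is empty and the structure is contact we are done directly: Giroux's result (3) gives an open book supporting $\xi=\ker\lambda$ in the contact sense, and, $d\lambda$ being a nowhere-vanishing multiple of $\omega$, $(\lambda,\omega)$ is stably homotopic to the supported structure $(\lambda,d\lambda)$ (with orientations and signs reversed in the negative case). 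When the family is empty and the structure is $\R$-invariant, the fibration over $S^1$ is already an open book with empty binding to which $\omega$ is page-positive; if a nonempty binding is required, one passes to another open book on $M$ with the same associated plane field class, applies (1'), and concludes as above.

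In the mixed case I would glue the local open books across the cutting tori --- by iterated Murasugi sums, so that the result is a genuine fibration on the complement of the binding with the standard model near it --- producing a global open book $(B,\pi)$ on $M$, and then feed $(B,\pi)$ into (1') and assemble the stable homotopy piece by piece via a relative version of (2'), matched along the collar model of the tori. \textbf{The main obstacle} is the compatibility of this gluing with the forms themselves: $\omega$ must stay page-positive across the cutting tori, where the positive-contact, negative-contact, and $\R$-invariant local shapes of $d\lambda=g\,\omega$ meet --- so the sign changes of $g$ have to be confined to the $\R$-invariant and collar regions --- while $\lambda$ must simultaneously keep a definite sign along every binding component, and near $B$ these two requirements pull against one another. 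A secondary but indispensable point is that the open book be chosen so that its associated plane field lies in the homotopy class of $\xi=\ker\lambda$ (possible because every homotopy class of cooriented plane fields on $M$ is realised by an overtwisted contact structure, hence by an open book), and that the pair $\bigl(\text{class of }\xi,\ [\omega]\bigr)$ be a complete stable homotopy invariant, so that what (1') produces on $(B,\pi)$ is genuinely stably homotopic to $(\lambda,\omega)$.
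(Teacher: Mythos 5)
Your decomposition into contact, $\R$-invariant, and collar pieces broadly mirrors the paper's structure theorem (which splits $M$ into positive/negative contact regions $N^\pm$, flat regions $N^0$, and integrable collar regions $U_j\cong[0,1]\times T^2$), and invoking Giroux's theorem (3) on the contact pieces is indeed the right first move. But there are two genuine gaps in how you propose to close the argument.

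First, the closing assertion that the pair $\bigl(\text{homotopy class of }\ker\lambda,\,[\omega]\bigr)$ is a complete stable-homotopy invariant is not available, and if it were, the entire theorem would reduce to a triviality: pick \emph{any} open book $(B,\pi)$ realizing the given plane-field class (via an overtwisted model plus (3)), apply (1') to get a supported SHS in the class $[\omega]$, and invoke completeness. The paper goes through a long explicit construction precisely because no such completeness statement is known; indeed the stable-homotopy classification of SHS is exactly what is at stake. Result (2') is also not a substitute here: it only compares two SHS \emph{already supported by the same open book}, so to invoke it you must first exhibit a stable homotopy from $(\omega,\lambda)$ to a supported SHS --- which is the content of the theorem, not an input to it. Your ``secondary but indispensable point'' is therefore circular.

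Second, the gluing mechanism is underspecified. Murasugi sum (plumbing) of open books is an operation at bindings, not a tool for identifying local open books along embedded $2$-tori, and across a cutting torus the page fibrations on the two sides restrict to \emph{different} primitive integer classes in $H^1(T^2;\Z)$ in general, so they simply do not match up. This is the hard part of the proof, and the paper resolves it by introducing intermediate subdivision points $r_1<\dots<r_n$ in each integrable region, a sequence of primitive classes $\gamma_k$ interpolating between $\gamma_a$ and $\gamma_b$, small auxiliary contact regions around each $r_k$, and a relative Giroux theorem with controlled rotation numbers $\{0,-1\}$ at the boundary tori (Proposition~4.1), together with a ``$\theta$--$\phi$--$\theta$'' trick to keep $[\omega_t]$ fixed across the integrable pieces. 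None of this is a technicality one can wave away by ``iterated Murasugi sums'' and ``a relative version of (2')''; it is where the sign and cohomology constraints you flag as ``the main obstacle'' are actually confronted and resolved. Finally, on the flat pieces one cannot simply ``pass to another open book on $M$'' mid-construction --- the fibration must be produced from the closed $1$-form $\bar\lambda$ after perturbing it to a rational class, while keeping $\bar\lambda\wedge\omega>0$ and keeping it $T^2$-invariant near $\partial N^0$, which is again a concrete step your sketch does not carry out.
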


{\bf Definitions. }
Let us first explain the notions appearing in the statement;
see~\cite{CV} for more details and background. 

Let $M$ denote a closed oriented $3$-manifold. A {\em stable
  Hamiltonian structure (SHS)} on $M$ is a pair $(\om,\lambda)$
consisting of a closed 2-form $\om$ and a 1-form $\lambda$ such that 
\begin{equation}\label{eq:def}
   \lambda\wedge\om>0 \qquad\text{and}\qquad  
   \ker(\om)\subset\ker(d\lambda).
\end{equation}
It induces a canonical {\em Reeb vector field} $R$
generating $\ker(\om)$ and normalized by $\lambda(R)=1$. 
Note that in dimension 3 the second condition in~\eqref{eq:def} is   
equivalent to 
$$
   d\lambda=f\om
$$
for some $f\in C^{\infty}(M)$ which we write as $f=d\lambda/\om$.
A {\em stable homotopy} is a smooth family of SHS $(\om_t,\lambda_t)$
{\em such that the cohomology class of $\om_t$ remains constant}. (The
condition on the cohomology class is natural for many reasons, e.g.~to
get invariance of symplectic field theory under stable
homotopies). A SHS $(\om,\lambda)$ with $0=[\om]\in H^2(M;\R)$ is
called {\em exact}. 
Note that each contact form $\lambda$ (i.e.~satisfying
$\lambda\wedge d\lambda>0$) induces an exact stable Hamiltonian
structure $(d\lambda,\lambda)$. 

An {\em open book decomposition} of $M$ is a pair $(B,\pi)$, where 
$B\subset M$ is an oriented link (called the {\em binding}) and  
$\pi:M\setminus B\rightarrow S^1$ is a fibration 
satisfying the following condition near $B$: Each connected
component $B_l$ of $B$ has a tubular neighbourhood $S^1\times
D^2\hookrightarrow M$ with orienting coordinates 
$(\phi,r,\theta)$, where $\phi$ is an orienting coordinate along
$S^1\cong B_l$ and $(r,\theta)$ are polar coordinates on
$D^2$, such that on $(S^1\times D^2)\setminus B_l$ we have
$\pi(\phi,r,\theta)=\theta$. It follows that the closure of each fibre  
$\pi^{-1}(\theta)$ is an embedded compact oriented surface
$\Sigma_\theta$ (called a {\em page}) with boundary $B$.  

We call a SHS $(\om,\lambda)$ {\em supported by the open book
  $(B,\pi)$} if $\om$ is positive on each fibre $\pi^{-1}(\theta)$. It
follows that $\lambda$ is nowhere vanishing on the binding. Thus for
every binding component $B_l$ we have a sign $s(B_l)$ which is $+1$
iff $\lambda$ induces the orientation of $B_l$ as boundary of a page. 
We emphasize that this {\em differs from the common notion in contact
  topology} where one requires that all signs are $+1$. The results
(i-iv) above in the contact case have to be understood for this more
restrictive notion to which we will refer as {\em positively
  supported}. It is shown in~\cite{CV} that in
Theorem~\ref{thm:Giroux} we cannot achieve ``positively supported'': 
There exists a stable Hamiltonian structure on $S^3$ which is not
stably homotopic to one that is positively supported by an open book. 

On the other hand, if an exact SHS is positively supported by an open
book, then by results (1) and (2') above it is stably homotopic to a
positive contact structure (i.e.~to a SHS of the form
$(d\lambda,\lambda)$ for a positive contact form $\lambda$). 
 
\begin{example}\label{ex:confol}
Consider an exact SHS $(\om,\lambda)$ for which $\lambda$ defines a
{\em confoliation}, i.e.~$\lambda\wedge d\lambda\geq 0$
(see~\cite{ET}). Then $(\om,\lambda)$ is stably homotopic to a
positive contact structure. To see this, note that 
the confoliation condition is equivalent to $f=d\lambda/\om\geq
0$. This allows us to achieve positive signs at all binding components
of the open book in Theorem~\ref{thm:Giroux} (see
Remark~\ref{rem:positive}), so the resulting SHS  
is positively supported by the open book and hence stably
homotopic to a positive contact structure. The stable homotopy can
also be constructed explicitly as follows: Write $\om=d\alpha$. Then
$\lambda_t:=\lambda+t\alpha$, $t\in[0,\eps]$, defines for small
$\eps>0$ a homotopy of stabilizing forms from $\lambda$ to the
positive contact form $\lambda_\eps$, and
$\bigl((1-t)\om+t\,d\lambda_\eps,\lambda_\eps\bigr)$ yields a stable
homotopy from $(\om,\lambda_\eps)$ to $(d\lambda_\eps,\lambda_\eps)$. 
\end{example}

\medskip

{\bf Sketch of proof. }
The proof of Theorem~\ref{thm:Giroux} departs from a structure
theorem proved in~\cite{CV} (see Section~\ref{sec:structure}): For
each SHS $(\om,\lambda)$ on a closed 
oriented 3-manifold $M$ we can change the 1-form $\lambda$ such that
$M=\bigcup_iN_i\cup\bigcup_jU_j$ is a union of compact regions such
that $(\om,\lambda)$ is $T^2$-invariant on $U_j\cong[0,1]\times T^2$,
and $d\lambda=c_i\om$ on $N_i$ with constants
$c_i\in\R$. We refer to the regions $U_j$ as {\em integrable regions}, 
and to the regions $N_i$ with $c_i=0$ (resp.~$>0$, $<0$) as
{\em flat (resp.~positive / negative contact) regions}. 

On a flat region we perturb and rescale $\lambda$ to make it integral
and obtain a fibration over $S^1$. On a (positive or negative) contact
region we use a relative version of Giroux's existence theorem
(3) above to produce an open book supporting the contact form
$\lambda$ (and hence the SHS $(\om,\lambda)$) which induces a
fibration $T^2\to S^1$ on each boundary torus. Finally, we use
techniques from~\cite{CV} to extend the SHS and open books over the
integrable regions $U_j$ to a SHS and supporting open book on $M$
(Section~\ref{sec:proof}).  

To prove the relative version of (3) we collapse a circle direction
transverse to the Reeb direction $\p N_i$ to obtain
a closed contact manifold $(\bar N_i,\bar\lambda)$
(Section~\ref{sec:relGiroux}). 
Each boundary torus $T_j$ gives rise to a transverse knot $L_j$. We
use Giroux's existence theorem (3) to find a supporting open book for
$(\bar N_i,\bar\lambda)$ and apply a result of Pavelescu~\cite{Pav} to
braid the link $L=\cup L_j$ around its binding
(Section~\ref{sec:braiding}). After standardizing $\bar\lambda$ near
the resulting link (Section~\ref{sec:standard}) we replace its
components back by 2-tori to obtain the desired open book on $N_i$. 
\medskip

{\bf Acknowledgements. }
We thank Y.~Eliashberg and J.~Etnyre for fruitful discussions.

\section{Braiding transverse knots around the binding of 
a contact open book}\label{sec:braiding}

We will use the following terminology. A contact form $\alpha$ is {\em
  supported by an open book $(B,\pi)$} if $\alpha$ restricts
positively to the binding and $d\alpha$ restricts positively to the
pages. A contact structure $\xi$ is {\em supported by an open book} if there
exists a contact form defining $\xi$ which is supported by the open
book. An oriented link $L$ is {\em (positively) transverse to the
  contact structure $\xi$} if a defining contact form $\alpha$
restricts positively to $L$. An oriented link $L$ is {\em braided
  around an open book $(B,\pi)$} if $L$ is disjoint from the binding
and positively transverse to the pages, i.e.~$d\pi$ restricts
positively to $L$. 

The goal of this section is to explain the following result from
E.~Pavelescu's thesis~\cite{Pav}. 

\begin{theorem}
\label{thm:braiding}
Let $M$ be a closed oriented $3$-manifold and $\xi$ be a cooriented
contact structure on $M$ supported by an open book $(B,\pi)$, and 
$L\subset M$ be a link positively transverse to $\xi$. 
Then there exist isotopies
$(B_t,\pi_t,L_t)_{t\in[0,1]}$ of open books $(B_t,\pi_t)$ supporting
$\xi$ and links $L_t$ transverse to $\xi$ such that
$(B_0,\pi_0,L_0)=(B,\pi,L)$ and $L_1$ is braided around $(B,\pi)$. 

Moreover, for every collection of suficiently large natural numbers
$k_1,\dots,k_\ell\geq K$, where $\ell$ is the number of components of
$L$ and $K$ a constant depending on $(B,\pi,\xi,L)$, we can arrange
that the intersection number of the $i$-th component of $L_1$ with a
page of $(B_1,\pi_1)$ equals $k_i$. 
\end{theorem}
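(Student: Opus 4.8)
The plan is to work in the complement of the binding and use the fibration $\pi:M\setminus B\to S^1$ together with the contact condition to push $L$ into braided position. First I would fix a contact form $\alpha$ defining $\xi$ and supported by $(B,\pi)$, so that $d\alpha>0$ on each page and $\alpha>0$ on $B$. Since $L$ is positively transverse to $\xi$, after a $C^0$-small isotopy I may assume $L$ is disjoint from $B$; the point is then to make $d\pi$ restrict positively to $L$. The obstruction to this is exactly the set of points where $L$ is tangent to a page or crosses it ``the wrong way.'' Near such a bad arc, $L$ runs (up to reparametrisation) along a page; I would like to replace this arc by one that stays transverse to the pages. The key geometric input is that on a page $\Sigma_\theta$ the form $d\alpha$ is an area form, and the Reeb-type directions transverse to $\Sigma_\theta$ are controlled by $\alpha$: there is a local model near any page in which $\pi$ is the ``angular'' coordinate and $\partial_\theta$ is positively transverse to $\xi$, so that small perturbations of $L$ in the $\partial_\theta$-direction keep $L$ transverse to $\xi$ while improving its position relative to $\pi$. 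Carrying this out on each bad arc — finitely many, after a preliminary perturbation to put $L$ in general position with respect to $\pi$ — yields an isotopy $L_t$ through transverse links (keeping $(B,\pi)$ fixed, so $B_t\equiv B$, $\pi_t\equiv\pi$) ending at a link $L_1$ braided around $(B,\pi)$.

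The main obstacle is the case of a bad arc that cannot be removed by a local push because it ``wraps'' around the binding in a topologically essential way, i.e.\ an arc on which $L$ is tangent to pages and which links a binding component. This is where one must allow the open book itself to move: the remedy is a \emph{positive stabilisation} of $(B,\pi)$ supported away from $L$, which enlarges the binding by a Hopf band and gives extra room to isotope the problematic arc across the new page without ever touching the binding. Since stabilisation is an isotopy of supporting open books for the fixed contact structure $\xi$ (this is where support by an open book, not just by the contact structure, is used), this produces the required isotopy $(B_t,\pi_t,L_t)$ with $(B_0,\pi_0,L_0)=(B,\pi,L)$ and $L_1$ braided around $(B_1,\pi_1)$.

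For the ``moreover'' statement I would first observe that once $L_1$ is braided, its $i$-th component meets a generic page in some positive number $k_i^0$ of points. To raise $k_i^0$ to a prescribed $k_i\geq K$ I use \emph{positive stabilisation of the braid}: inside a flow-box of the fibration $\pi$ traversed by the $i$-th component, insert a small ``zig-zag'' (a finger move adding one positive strand, i.e.\ a positive Markov stabilisation of the braid closure) without changing the transverse isotopy class of $L$ and without touching $B$. Each such move increases the intersection number of that component with a page by exactly $1$, so iterating $k_i-k_i^0$ times on the $i$-th component achieves the prescribed intersection numbers; the constant $K$ is simply $\max_i k_i^0$, which depends only on $(B,\pi,\xi,L)$. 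A minor technical point to check is that these finger moves on different components can be performed in disjoint flow-boxes so that they do not interfere; this is arranged by choosing the insertion regions near distinct points of a common page.

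Throughout, the genuinely delicate step is the first paragraph's local straightening together with the global stabilisation trick of the second paragraph — i.e.\ showing that \emph{every} obstruction to being braided is either locally removable or removable after a stabilisation disjoint from $L$. The remaining steps (general position with respect to $\pi$, the counting behaviour of Markov stabilisations, disjointness of flow-boxes) are routine.
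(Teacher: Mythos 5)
Your sketch does not follow the paper's approach and has two serious gaps, one in each of the first two paragraphs.

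The paper's proof (following Pavelescu) works by deforming the contact form by $\alpha_t=\alpha+tf\,d\theta$ near the binding and using Gray stability: the resulting vector field $X_t$ is shown to be tangent to the pages and to be a positive multiple of a Liouville field $X$ for $d\alpha|_{\Sigma_\theta}$, so it expands area on each page. The dynamics of $X$ are then analyzed via the Poincar\'e--Bendixson theorem: every flow line either enters a standard neighbourhood $V$ of the binding in finite time or converges to a hyperbolic zero, and the ``stuck'' set $S$ (zeros, closed orbits, stable manifolds) is a stratified set of positive codimension. After a genericity argument and a ``wrinkling'' operation one makes the bad arcs of $L$ avoid $S$, the escape lemma (Lemma~\ref{lem:escape}) pushes them into $V$ by the Gray flow, and inside $V$ Bennequin's theorem finishes the braiding. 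Your first paragraph replaces this entire mechanism with a claimed local $\partial_\theta$-push that ``keeps $L$ transverse to $\xi$ while improving its position relative to $\pi$.'' This is not justified: near a bad arc the contact planes and the page tangent planes can be arbitrarily close, so a small perturbation transverse to the page has no a priori reason to make the arc positively transverse to the pages while staying transverse to $\xi$. The nontrivial content of the theorem is exactly the global isotopy that your local push cannot produce, and nothing in your argument engages with the dynamics that make this work.

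Your second paragraph's use of positive stabilization is a wrong turn. Stabilizing an open book is \emph{not} an isotopy of open books: it changes the topological type of the page (genus, number of binding components), whereas the theorem explicitly asks for an \emph{isotopy} $(B_t,\pi_t)$ — precisely because allowing stabilizations would make the statement much weaker and closer to the ordinary Giroux correspondence. The paper only allows a $C^1$-small isotopy of the open book (arising from the standardization in Step~0 and Gray's theorem), not a stabilization. Moreover, even if stabilization were allowed, your claim that it removes an ``essentially wrapping'' bad arc is asserted without argument; it is not clear why a Hopf-band plumbing supported away from $L$ would give any room for the offending arc. Your third paragraph on the ``moreover'' statement is essentially correct and close to the paper's argument (which pulls the component through a binding circle to raise its page-intersection number), but this cannot rescue the rest of the proof.
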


\begin{remark}
Pavelescu claims the stronger result that the open book
$(B_t,\pi_t)=(B,\pi)$ can be fixed. As the proof in~\cite{Pav}
contains some gaps, we repeat it below with some more details. 
The deformation of the open book is needed 
for technical reasons; it can be made $C^1$-small, and presumably be
avoided with more work. 
\end{remark}

The proof in~\cite{Pav} is based on the following construction.  
Consider a contact form $\alpha$ supported by an open book
$(B,\pi)$. Suppose that 
$$
   \alpha=T(1-r^2)(d\phi+r^2d\theta)
$$
in a tubular neighbourhood $W=\{r\leq r_0\}$ of $B$, where $T>0$ is
some constant and $(\phi,r,\theta)$ are (polar) coordinates near the
respective binding components in which $\pi=\theta$. 
We define another neighbourhood 
$$
  V:=\{r\le r_0/2\}\subset\{r\le r_0\}=W
$$ 
of $B$. Let $f:[0,r_0]\to[0,1]$ be a nondecreasing function which
equals $T(1-r^2)r^2$ for $r\leq r_0/2$ and $1$ near $r=r_0$. It
extends by $1$ over $M\setminus W$ to a function on $M$ that we also
denote by $f$. Consider the family of contact structures 
\begin{equation}\label{eq:defcontflow}
  \xi_t:=\ker\alpha_t, \qquad \alpha_t:=\alpha+tf\,d\theta, \qquad
  t\in[0,\infty). 
\end{equation}

By Gray's stability theorem we know that there is a family of
diffeomorphisms $\{\Psi_t\}_{t\in [0,\infty)}$ such that $\Psi_t$
pulls back $\alpha_t$ to a multiple of $\alpha_0=\alpha$.  
We need to analyse $\{\Psi_t\}$ more closely. 
Recall that $\Psi_t$ is given as the flow of the time-dependent
vector field $X_t\in\xi_t$ defined by 
\begin{equation}\label{eq:Gray}
   i_{X_t}d\alpha_t = h_t\alpha_t-\dot\alpha_t,\qquad
   h_t:=\dot\alpha_t(R_t), 
\end{equation}
where $R_t$ denotes the Reeb vector field of $\alpha_t$ and
$\dot\alpha_t$ denotes the time derivative of $\alpha_t$. 

Consider a page $\Sigma$ and note that
$\alpha_t|_\Sigma=\alpha_\Sigma$, so all the $\alpha_t$ define the
{\em same} characteristic foliation $\FF=\xi\cap T\Sigma =
\xi_t\cap\Sigma$ on $\Sigma$. Since $d\alpha_t|_\Sigma=d\alpha_\Sigma$
is positive, $R_t$ is positively transverse to $\Sigma$ and 
$\dot\alpha_t=f\,d\theta$ yields $h_t=f\,d\theta(R_t)>0$. 
Contracting equation \eqref{eq:Gray} with any vector $v\in\xi\cap T\Sigma$
we obtatain $d\alpha(X_t,v)=0$, and since $v,X_t\in\xi$ and $d\alpha|_\xi$ is
nondegenerate this implies that $v$ and $X_t$ are collinear. This
shows that $X_t$ is tangent to the pages and spans the charateristic
foliation. Moreover, 
the restriction of equation~\eqref{eq:Gray} to $\Sigma$ gives
\begin{equation}\label{eq:Graypage}
   i_{X_t}d\alpha|_\Sigma=h_t\alpha|_\Sigma.
\end{equation}
Thus $X_t$ is determined by equation~\eqref{eq:Graypage}. In
particular, each $X_t=h_tX$ is a positive multiple of the Liouville
field $X$ tangent to the pages defined by 
$$
   i_Xd\alpha|_\Sigma=\alpha|_\Sigma.
$$
A short computation shows $X=-\frac{1-r^2}{2r}\p_r$ on $W$, so $X$
points into $V$ along $\p V$. 
The key property of $X$ is that $L_Xd\alpha|_\Sigma= 
d(i_Xd\alpha|_\Sigma)=d\alpha|_\Sigma$, i.e.~$X$ expands the positive
area form $d\alpha|_\Sigma$ on the page. 
This has the following dynamical consequences:
\begin{enumerate}
\item Each closed orbit of $X$ is repelling.
\item At each zero $p\in\Sigma$ of $X$ the linearization
$d_pX|_{T\Sigma}$ has an eigenvalue with positive real part. 
If the eigenvalues are non-real, or both real and positive, this
implies that $p$ is nondegenerate and $X$ flows out of $p$; we call
such $p$ {\em elliptic}.  
If the eigenvalues are real with one positive and one non-positive we
call $p$ {\em hyperbolic}; in this case there may be one or two
flow lines converging to $p$ in forward time. 
\item As a consequence of (i-ii) and the Poincar\'e-Bendixson Theorem,
every flow line of $X$ which is not a zero or a closed orbit either
enters $V$ in finite time or converges to a hyperbolic zero in forward
time. 
\end{enumerate}
Now consider the $S^1$-family of pages
$\Sigma_\theta:=\pi^{-1}(\theta)$, $\theta\in S^1$. For any $\theta\in
S^1$ let $S_\theta$ denote the subset of $\Sigma_\theta$ consisting of
zeroes, closed orbits, and flow lines converging to hyperbolic zeros
in forward time. We set 
\begin{equation}\label{eq:nogo}  
   S:=\cup_{\theta\in S^1}S_\theta
\end{equation}

The following statement is crucial for the proof.

\begin{lemma}\label{lem:escape}
Let $U_S$ be an open neighbourhood of $S$ in $M\setminus B$. 
Then there exists $\tau\in [0,+\infty)$ such that 
$\Psi_\tau(M\setminus U_S)\subset V$. 
\end{lemma}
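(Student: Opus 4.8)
The statement asserts that the Gray flow $\Psi_t$ eventually pushes every point outside a given neighbourhood $U_S$ of the "bad set" $S$ into the small neighbourhood $V$ of the binding. I would prove this by a compactness argument built on the dynamical consequences (i)–(iii) of the expansion property $L_X d\alpha|_\Sigma = d\alpha|_\Sigma$, combined with the fact that $\Psi_t$ is, on each page, just a reparametrised flow of the single Liouville vector field $X$ (since $X_t = h_t X$ with $h_t > 0$).

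First I would record the reduction of the $3$-dimensional problem to the $2$-dimensional page dynamics. The diffeomorphisms $\Psi_t$ preserve the pages $\Sigma_\theta$ (because $X_t$ is tangent to the pages), so it suffices to analyse, for each fixed page $\Sigma = \Sigma_\theta$, the flow of $X$ on $\Sigma$. Because $X_t = h_t X$ differs from $X$ only by a positive time-reparametrisation, the $\Psi_t$-orbit of a point $p \in \Sigma$ traces out exactly the forward $X$-orbit of $p$, just at a different speed; hence "$\Psi_\tau(p) \in V$ for some $\tau$" is equivalent to "the forward $X$-orbit of $p$ enters $V$". The content of (iii) is precisely that every forward $X$-orbit on $\Sigma$ starting outside $\bar V$ either enters $V$ in finite time or converges to a hyperbolic zero; and a point whose forward orbit converges to a hyperbolic zero, or is itself a zero or a closed orbit, lies in $S_\theta \subset S$ by definition. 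So for $p \in \Sigma \setminus (V \cup S_\theta)$ the forward $X$-orbit enters $V$ in finite time.

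Next I would upgrade this pointwise statement to a uniform one over the compact set $K := M \setminus U_S$. Fix $p \in K$; by the previous paragraph its forward orbit enters the open set $V$ at some finite time $\tau_p$, i.e.\ $\Psi_{\tau_p}(p) \in \inn V$ (one has to be slightly careful that entering $V$ in "$X$-time" corresponds to entering $V$ in "$\Psi$-time", which follows since $h_t$ is bounded away from $0$ on the compact region between $\p W$ and $\p V$ where the relevant dynamics takes place — or simply because $V$ is positively invariant under $X$, so once an orbit enters it stays, and the speed change cannot undo that). Since $\Psi$ is continuous, there is an open neighbourhood $O_p$ of $p$ in $M$ with $\Psi_{\tau_p}(O_p) \subset V$; and because $X$ points strictly inward along $\p V$, the set $V$ is forward-invariant under all $\Psi_t$, $t \geq 0$, so in fact $\Psi_t(O_p) \subset V$ for all $t \geq \tau_p$. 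Covering the compact set $K$ by finitely many such $O_{p_1}, \dots, O_{p_m}$ and setting $\tau := \max_j \tau_{p_j}$ gives $\Psi_\tau(K) \subset V$, as desired.

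The main obstacle is the step asserting that forward $X$-orbits starting outside $U_S$ genuinely enter $V$ in uniformly bounded time — i.e.\ making the passage from a pointwise finite-entry-time statement to a uniform bound genuinely rigorous. The subtlety is that the escape time $\tau_p$ could a priori blow up as $p$ approaches $S$ from outside $U_S$; this is exactly why one restricts to the compact complement $K = M\setminus U_S$ and why one needs the neighbourhood $U_S$ to be \emph{open} containing all of $S$. One must also confirm that $S$ is closed in $M \setminus B$ (so that $K$ is compact) — this uses that zeros, closed orbits, and forward-limit sets of hyperbolic zeros vary semicontinuously as $\theta$ varies over $S^1$, which is where the precise structure of $X$ near $W$ (namely $X = -\frac{1-r^2}{2r}\p_r$, with no zeros and everything pointing into $V$) and the finiteness/nondegeneracy of elliptic zeros enter. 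Once $K$ is compact and the pointwise escape statement is in hand, the finite subcover argument closes the proof.
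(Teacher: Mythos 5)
Your reduction to the page dynamics is correct, and the finite-subcover step for the compact set $K=M\setminus U_S$ (which is compact simply as a closed subset of $M$; the closedness of $S$ is not needed for this) together with the forward invariance of $V$ under all $X_t=h_tX$ is fine. The genuine gap is in the step where you pass from the autonomous $X$-flow to the Gray flow $\Psi_t$: you claim that ``$\Psi_\tau(p)\in V$ for some $\tau$'' is equivalent to ``the forward $X$-orbit of $p$ enters $V$'', because $X_t=h_tX$ is a positive time-reparametrisation. But $h_t\to 0$ pointwise as $t\to\infty$ (the paper shows $h_t=f\,d\theta(R_t)\sim t^{-1}$), so the $\Psi_t$-orbit of $p$ traces out only the portion of the $X$-orbit corresponding to $X$-time $\int_0^t h_s(\Psi_s(p))\,ds$, and if this integral were bounded the $\Psi_t$-orbit would \emph{never} cover the whole forward $X$-orbit and could stall before reaching $V$. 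Your parenthetical justifications do not close this: ``$h_t$ is bounded away from $0$ on the compact region between $\p W$ and $\p V$'' is false as a statement in $t$ (for fixed position $h_t\to 0$), and ``$V$ is positively invariant under $X$, so once an orbit enters it stays'' addresses the irrelevant worry that an orbit might leave $V$, not the relevant worry that the reparametrised orbit might never arrive.

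The missing ingredient, which is precisely what the paper's proof supplies, is the quantitative estimate on the decay of $h_t$. From $1=\alpha_t(R_t)=\alpha(R_t)+tf\,d\theta(R_t)$ and $R_t\to 0$ one gets $h_t=f\,d\theta(R_t)\sim t^{-1}$, hence $\int_1^\infty h_t\,dt=\infty$; the divergence of this reparametrisation integral is exactly what guarantees that the $\Psi_t$-flow eventually covers any prescribed finite amount of $X$-time, and hence reaches $V$. With that estimate your compactness argument does go through (indeed the paper uses compactness to get a uniform $X$-time bound $\tau_0$ first, and then converts to a uniform $\Psi$-time bound $\tau$ via the divergent integral), but without it the central pointwise claim of your proof is unsupported.
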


\begin{proof}
By property (iii) above and compactness, there exists a constant
$\tau_0>0$ such that each point in $M\setminus U_S$ reaches $V$ in
time $\tau_0$ under the flow of $X$.
To get the corresponding statement with the time-dependent 
vector field $X_t$ in place of $X$ we need to look at the behaviour of
the length of $X_t$ as $t\to +\infty$. 
Since the $d\theta$ component of $\alpha_t$
goes to $\infty$ as $t\to 0$, we get that $R_t\to 0$ as $t\to\infty$.  
>From 
$$
   1 = \alpha_t(R_t) = \alpha(R_t) + tf\,d\theta(R_t)
$$
and $\alpha(R_t)\to 0$ as $t\to\infty$ we see that $h_t =
f\,d\theta(R_t) =  O(t^{-1})$ as $t\to \infty$ on $M\setminus B$. 
Since $\int_1^{\infty}\frac{1}{t}dt=\infty$, there exists a constant
$\tau>0$ such that each point in $M\setminus U_S$ reaches $V$ in
time $\tau$ under the flow $\Psi_t$ generated by $X_t$.
\end{proof}

Besides the preceding discussion, we will also use in the proof the
following 3 lemmas which correspond to Lemmas 4.9, 4.10 and 4.12 in~\cite{CV}, 
respectively. 

\begin{lemma}\label{lm:cutoff}
For all $\delta,\varepsilon>0$ there exists a smooth function
$\rho:[0,\delta]\rightarrow[0,1]$ with the following properties:
$\rho$ is nonincreasing, constant $1$ in a neighbourhood of $0$,
constant $0$ in a neighbourhood of $\delta$, and 
$$
   |x\rho'(x)|<\varepsilon
$$ 
for all $x\in [0,\delta]$. 
\end{lemma}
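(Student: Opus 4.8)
The plan is to linearize the constraint $|x\rho'(x)|<\varepsilon$ by passing to the logarithmic variable $s=\ln x$. If we set $\rho(x)=\tilde\rho(\ln x)$ for $x\in(0,\delta]$, the chain rule gives $x\rho'(x)=\tilde\rho'(s)$, so the required inequality becomes simply $|\tilde\rho'(s)|<\varepsilon$ on the half-line $s\in(-\infty,\ln\delta]$. Since that half-line has infinite length there is unlimited room to the left: I will choose a smooth nonincreasing $\tilde\rho:(-\infty,\ln\delta]\to[0,1]$ which is identically $1$ for $s\le\ln\delta-2/\varepsilon-2$, identically $0$ on a neighbourhood of $\ln\delta$, and satisfies $|\tilde\rho'|\le\varepsilon/2<\varepsilon$ everywhere. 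Such a function is obtained, for instance, by mollifying (with a kernel of small support) the piecewise-linear function that is $1$, then drops to $0$ with slope $-\varepsilon/2$ over an $s$-interval of length $2/\varepsilon$ ending slightly to the left of $\ln\delta$, then stays $0$; mollification leaves the derivative in $[-\varepsilon/2,0]$ and preserves the two constant regions after shrinking them slightly.

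Next I define $\rho(x):=\tilde\rho(\ln x)$ for $x\in(0,\delta]$ and $\rho(0):=1$. For $x$ near $0$ the value $\ln x$ is very negative, hence $\tilde\rho(\ln x)\equiv 1$; so $\rho$ is identically $1$ on a neighbourhood of $0$ and in particular smooth there. On $(0,\delta]$ it is a composition of smooth maps, and $\tilde\rho$ being constant $0$ near $\ln\delta$ makes $\rho$ constant $0$ near $\delta$. Monotonicity of $\rho$ follows from that of $\tilde\rho$ and of $\ln$, the range is $[0,1]$ by construction, and $|x\rho'(x)|=|\tilde\rho'(\ln x)|<\varepsilon$ for all $x\in(0,\delta]$, while at $x=0$ we have $\rho'\equiv 0$ and the bound is trivial.

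There is no real obstacle here; the only point requiring a little care is arranging that the drop region of $\tilde\rho$ lies strictly to the left of $\ln\delta$, so that $\rho$ is genuinely constant near both endpoints rather than merely $C^\infty$-flat. If one prefers to avoid logarithms, an equivalent explicit formula is $\rho(x):=1-\bigl(\int_0^x \chi(t)/t\,dt\bigr)/\bigl(\int_0^\delta \chi(t)/t\,dt\bigr)$, where $\chi:[0,\delta]\to[0,1]$ is a smooth bump supported in $[a,\delta/2]$ and equal to $1$ on $[2a,\delta/4]$; then $x\rho'(x)=-\chi(x)/\int_0^\delta \chi(t)/t\,dt$, so $|x\rho'(x)|\le 1/\ln(\delta/(8a))$, which is $<\varepsilon$ as soon as $a$ is chosen smaller than $(\delta/8)e^{-1/\varepsilon}$.
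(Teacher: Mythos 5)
Your proof is correct. Note that this paper does not actually prove Lemma~\ref{lm:cutoff}; it is quoted from~\cite{CV} (Lemma~4.9 there), so there is no in-text argument to compare against. Your logarithmic-substitution argument is the natural one: writing $\rho(x)=\tilde\rho(\ln x)$ turns the constraint $|x\rho'(x)|<\varepsilon$ into a uniform derivative bound on the half-line $(-\infty,\ln\delta]$, where the infinite length makes the slope bound trivially compatible with a full descent from $1$ to $0$. The alternative explicit formula $\rho(x)=1-\bigl(\int_0^x\chi(t)\,dt/t\bigr)\big/\bigl(\int_0^\delta\chi(t)\,dt/t\bigr)$ with $\chi$ a bump supported in $[a,\delta/2]$ is the same idea in disguise (the denominator is a truncated $\ln$-integral), and it is in fact the more common way such cutoffs are written down; both work, and your estimate $|x\rho'(x)|\le 1/\ln(\delta/(8a))<\varepsilon$ for $a<(\delta/8)e^{-1/\varepsilon}$ checks out. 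All the required properties — smoothness (including at $0$, where $\rho\equiv 1$ near the endpoint), monotonicity, the boundary constancy, and the bound at $x=0$ where $\rho'=0$ — are verified.
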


\begin{lemma}\label{lem:obd-cont}
Let $\alpha$ be a positive contact form on $U=S^1\times D^2$ 
satisfying $d\theta\wedge d\alpha>0$ and
$T:=\int_{S^1\times\{0\}}\alpha>0$. Then 
there exists a homotopy rel $\p U$ of contact forms $\alpha_t$ 
satisfying $d\theta\wedge d\alpha_t>0$ such that $\alpha_0=\alpha$
and 
$$
  \alpha_1=T(r^2d\theta+(1-r^2)d\phi)
$$
near $S^1\times\{0\}$.
\end{lemma}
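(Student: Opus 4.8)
The plan is to localise everything near the core circle $C:=S^1\times\{0\}$, bring $\alpha$ into a rotationally symmetric normal form there, and then connect that normal form to the model $T\bigl((1-r^2)\,d\phi+r^2\,d\theta\bigr)$ by an explicit path, gluing to $\alpha$ away from $C$. The organising principle is that among the two conditions we must preserve, the open book condition $d\theta\wedge d\beta>0$ is \emph{convex} in the $1$-form $\beta$, whereas the contact condition $\beta\wedge d\beta>0$ is quadratic; so straight-line homotopies are automatically fine for the first and only have to be controlled for the second.

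\emph{Step 1: rotational normal form near $C$ (the main obstacle).} I would first homotope $\alpha$, rel $\p U$ and through contact forms satisfying $d\theta\wedge d\alpha>0$, so that on a small neighbourhood $\{r\le\delta\}$ it becomes
$$
   \alpha=f_1(r)\,d\phi+f_2(r)\,d\theta,
$$
where $f_1=F_1(r^2)$ and $f_2=r^2F_2(r^2)$ are smooth, $F_1(0)=T$, $F_2(0)>0$; in these terms the two conditions read $f_1'<0$ for $r>0$ and $f_1f_2'-f_2f_1'>0$, together with $F_1(0)F_2(0)>0$ at $r=0$. To get here, analyse the $1$- and $2$-jets of $\alpha$ transverse to $C$: the inequalities $\alpha\wedge d\alpha>0$ and $d\theta\wedge d\alpha>0$, evaluated along $C$, already pin these jets into a standard shape, and using $T=\int_C\alpha>0$ one arranges, after a preliminary $C^0$-small homotopy, that $C$ is a closed Reeb orbit of $\alpha$ of period $T$. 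A closed Reeb orbit has a standard tubular neighbourhood, and the page fibration $\theta$ near $C$ is likewise standard; reconciling the two gives a diffeomorphism $\Phi$ of $\{r\le\delta\}$, equal to the identity near its outer boundary and isotopic to it through diffeomorphisms preserving the fibres of $\theta$, with $\Phi^*\alpha$ rotationally symmetric (a subsequent rescaling by a function of $r$, which only modifies $f_1,f_2$, normalises the form). Pulling $\alpha$ back along this isotopy gives the required homotopy; it stays in the admissible class precisely because $\theta$ is preserved, and one keeps $\int_C\alpha_t=T$ throughout. The crux is exactly this: producing the rotational model near the binding while never leaving the class of open book supported contact forms, i.e.\ making the standard neighbourhood of the Reeb orbit $C$ compatible with the page fibration.

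\emph{Step 2: interpolation to the model and gluing.} Once $\alpha$ is rotationally symmetric near $C$, I would connect the germ $(f_1,f_2)$ to $(T(1-r^2),\,Tr^2)$ in two sub-steps. First move $f_2$ to $Tr^2$ by $f_2^s:=(1-s)f_2+sTr^2$ with $f_1$ fixed: the $s$-derivative of $f_1(f_2^s)'-f_2^sf_1'$ equals $2Trf_1-Tr^2f_1'=Tr(2f_1-rf_1')>0$ near $C$ because $f_1>0$ and $-f_1'\ge0$ there, so the contact inequality is preserved (and $f_1'<0$ is untouched). Then move $f_1$ to $T(1-r^2)$ by $f_1^s:=(1-s)f_1+sT(1-r^2)$: this keeps $(f_1^s)'<0$, and since $f_1^s>0$ near $C$ it keeps $f_1^s(Tr^2)'-Tr^2(f_1^s)'=Tr\bigl(2f_1^s-r(f_1^s)'\bigr)>0$; the jet conditions at $r=0$ stay valid since $F_1(0)=T$ and $F_2(0)>0$ persist. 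To upgrade these germ homotopies to homotopies on all of $U$ fixed near $\p U$, run the interpolation on a fixed annulus $\{\delta/2\le r\le\delta\}$ and cut it off in $r$; the error terms the cutoff produces in $d\theta\wedge d(\,\cdot\,)$ — which only involve the $d\phi$-component of $d\alpha$ — are controlled by choosing the cutoff function as in Lemma~\ref{lm:cutoff}, and the contact condition is then restored on the transition annulus by a further fibrewise rescaling of the $d\phi$-component, exploiting convexity of the $d\theta$-constraint. This step is essentially explicit once the local model of Step 1 is in place.
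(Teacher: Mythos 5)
The paper does not give its own proof of this lemma (it is imported verbatim from Lemma~4.9 of~\cite{CV}), so your argument has to stand on its own. Your Step~2 is fine: once $\alpha=f_1(r)\,d\phi+f_2(r)\,d\theta$ near $C$, the two linear-in-$s$ interpolations do preserve the contact determinant and the sign of $f_1'$, and the cutoff/gluing with $|r\rho'|\le\eps$ is exactly the Lemma~\ref{lm:cutoff} device the paper itself uses (cf.\ Lemma~\ref{lem:knot-openbook-form}). The genuine gap is in Step~1, and specifically in the sentence ``A closed Reeb orbit has a standard tubular neighbourhood \dots reconciling the two gives a diffeomorphism $\Phi$ \dots with $\Phi^*\alpha$ rotationally symmetric.'' This cannot be extracted from a neighbourhood theorem: $\Phi^*\alpha$ and $\alpha$ have conjugate Reeb flows, so the linearized return map of $\alpha$ along $C$ is a conjugacy invariant; for a rotationally symmetric form $f_1(r)\,d\phi+f_2(r)\,d\theta$ it is an honest rotation, whereas for a generic $\alpha$ it is an arbitrary element of $\SL(2,\R)$ (parabolic or hyperbolic). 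Hence no coordinate change --- $\theta$-preserving or not --- can bring a generic $\alpha$ to rotational normal form near $C$; one needs a genuine homotopy of contact forms, which is precisely what the tubular-neighbourhood theorem does \emph{not} supply. (Those theorems normalize the contact \emph{structure} $\xi$, not the contact \emph{form} $\alpha$.) Moreover, even at the level of $\xi$ the Moser contactomorphism from the transverse-knot theorem is not automatically compatible with the given page coordinate $\theta$; you flag this as ``the crux'' but then appeal to it rather than proving it. Finally, the ``preliminary $C^0$-small homotopy'' making $C$ a Reeb orbit changes the $1$-jet of $\alpha$ along $C$, which is a $C^1$-scale change; since both constraints are conditions on $d\alpha$, this needs a justification rather than a smallness remark.

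The natural repair is to abandon rotational symmetry altogether, in the spirit of the paper's Lemmas~\ref{lem:knot-openbook-structure}--\ref{lem:knot-openbook-form}: (1) run a \emph{page-preserving} Moser argument to produce coordinates near $C$ in which $\xi=\ker(d\phi+r^2d\theta)$ and $\theta$ is still the polar angle --- this is the real content and needs an explicit construction (for instance, using the Liouville vector field on the pages to build the radial coordinate); (2) then $\alpha=g\,\alpha_{\rm st}$ with $g>0$ and, in this chart, $d\theta\wedge d\alpha>0$ reads $\p_r g<0$ for $r>0$; (3) interpolate $g$ to the germ $T(1-r^2)$ on a fixed annulus via a cutoff with $|r\rho'(r)|\le\eps$, controlling the error term $\rho'(r)\bigl(T(1-r^2)-g\bigr)$ as in the proof of Lemma~\ref{lem:knot-openbook-form}; (4) pass from $T(1-r^2)\alpha_{\rm st}$ to the stated target $T\bigl(r^2d\theta+(1-r^2)d\phi\bigr)$ --- they differ by $O(r^4)$ --- via Lemma~\ref{lem:interpol2}, as the paper does in Step~0 of the proof of Theorem~\ref{thm:braiding}. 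Your Step~2 algebra would then be a special case of (3), but the burden of proof sits almost entirely in (1), which your write-up currently replaces with a citation that does not deliver what you need.
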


\begin{lemma}\label{lem:interpol2}
Let $\phi$ be the angular coordinate on
$S^1$ and $(r,\theta)$ be polar coordinates on $D^2$. 
Let $\sigma:[0,1]\to\R$ be a function, constant near $r=0$ and
supported in $[0,\delta]$, with $|r\sigma'(r)|\le \varepsilon$. 
Let $\alpha_0,\alpha_1$ be two contact forms on $S^1\times D^2$
satisfying $d\theta\wedge d\alpha_i\geq \beta>0$, $i=0,1$ and
$\alpha_1-\alpha_0=O(r^2)$ near $\{r=0\}$. Then for $\delta,
\varepsilon$ sufficiently small the 1-form $\alpha :=
(1-\sigma(r))\alpha_0 + \sigma(r)\alpha_1$ is contact and satisfies
$d\theta\wedge d\alpha>0$.  
\end{lemma}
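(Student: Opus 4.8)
The plan is to treat $\alpha=\alpha_0+\sigma(r)\,\eta$, where $\eta:=\alpha_1-\alpha_0$, as a small perturbation of the ``frozen'' interpolation obtained by regarding $\sigma$ as locally constant. Since $\eta$ is a smooth $1$-form whose Cartesian coefficients on $D^2$ vanish to second order along $\{r=0\}$, we have $\eta=O(r^2)$ and hence $d\eta=O(r)$ near $\{r=0\}$. Writing $\gamma_s:=\alpha_0+s\,\eta$, using $d\sigma=\sigma'(r)\,dr$ together with the elementary identity $\eta\wedge dr\wedge\eta=0$, I would first record the pointwise identities
\[
  \alpha\wedge d\alpha=\gamma_{\sigma(r)}\wedge d\gamma_{\sigma(r)}+\alpha_0\wedge d\sigma\wedge\eta,
\]
\[
  d\theta\wedge d\alpha=(1-\sigma)\,d\theta\wedge d\alpha_0+\sigma\,d\theta\wedge d\alpha_1+\sigma'(r)\,d\theta\wedge dr\wedge\eta,
\]
where $\gamma_{\sigma(r)}\wedge d\gamma_{\sigma(r)}$ denotes the quadratic polynomial $s\mapsto\gamma_s\wedge d\gamma_s$ evaluated at $s=\sigma(r)$. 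In each identity the frozen contribution is everything but the last summand, and that last summand is an error carrying the factor $\sigma'(r)$, for which the hypothesis gives $|\sigma'(r)|\le\varepsilon/r$.

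The condition $d\theta\wedge d\alpha>0$ is the easy half. As $\sigma$ takes values in $[0,1]$ (the situation relevant for the application, cf.~Lemma~\ref{lm:cutoff}), the first two summands of the second identity form a genuine convex combination of $d\theta\wedge d\alpha_0\ge\beta$ and $d\theta\wedge d\alpha_1\ge\beta$, hence are $\ge\beta$; and in the error term only the $d\phi$-component of $\eta$, which is $O(r^2)$, survives the wedge with the ``planar'' $2$-form $d\theta\wedge dr$, so using $|d\theta\wedge dr|=O(r^{-1})$ and $|\sigma'(r)|\le\varepsilon/r$ this term is $O(\varepsilon)$ uniformly. Thus $d\theta\wedge d\alpha\ge\beta-C\varepsilon>0$ once $\varepsilon$ is small, with $C$ depending only on $\alpha_0,\alpha_1$.

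For the contact condition I would estimate the frozen term. From $\gamma_s\wedge d\gamma_s-\alpha_0\wedge d\alpha_0=s\bigl(\alpha_0\wedge d\eta+\eta\wedge d\alpha_0\bigr)+s^2\,\eta\wedge d\eta$ together with $\eta=O(r^2)$ and $d\eta=O(r)$, one gets $\gamma_s\wedge d\gamma_s=\alpha_0\wedge d\alpha_0+O(r)$ uniformly for $s\in[0,1]$, while $\alpha_0\wedge d\alpha_0\ge c_0>0$ on the compact solid torus. So I would first shrink $\delta$ until the $O(r^2)$-estimate holds on $\{r\le\delta\}$ and $\gamma_s\wedge d\gamma_s\ge\tfrac12 c_0$ there for every $s\in[0,1]$; since $\mathrm{supp}\,\sigma\subset\{r\le\delta\}$, the frozen term in the first identity is then $\ge\tfrac12 c_0$, while $|\alpha_0\wedge d\sigma\wedge\eta|=O\bigl(|\sigma'(r)|\,|\eta|\bigr)=O(\varepsilon r)$. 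All implied constants being independent of $\delta$, shrinking $\varepsilon$ afterwards yields $\alpha\wedge d\alpha\ge\tfrac14 c_0>0$ on $\{r\le\delta\}$, and off $\mathrm{supp}\,\sigma$ one simply has $\alpha=\alpha_0$, which is contact.

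The step requiring the most care is this last one: the straight segment $\gamma_s$ from $\alpha_0$ to $\alpha_1$ need not consist of contact forms on all of $S^1\times D^2$, so one cannot argue by ``$\alpha$ lies on a path of contact forms''. The hypothesis $\eta=O(r^2)$ is precisely what rescues it --- it makes $\gamma_s\wedge d\gamma_s$ $C^0$-close to $\alpha_0\wedge d\alpha_0$ on the thin shell $\{r\le\delta\}$ where the cutoff actually varies, and simultaneously forces the $d\sigma$-correction to be $O(\varepsilon r)$ rather than merely $O(\varepsilon)$; meanwhile the observation that $d\theta\wedge d(\cdot)$ depends affinely, not just smoothly, on the interpolation parameter is what makes the transversality-to-pages condition come for free. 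The order of the choices is the natural one: first $\delta$ (using only $\alpha_0,\alpha_1,c_0,\beta$), then $\varepsilon$.
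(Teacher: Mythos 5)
Your proof is correct and proceeds by essentially the same route as the paper's own treatment of such interpolations (the paper itself defers this lemma to Lemma 4.12 of~\cite{CV}, and argues identically in Lemma~\ref{lem:knot-openbook-form} and Step 2 of Lemma~\ref{lem:knot-openbook-structure}): isolate the ``frozen'' convex part and control the extra summand carrying $\sigma'$ via $|r\sigma'(r)|\le\varepsilon$ together with $\alpha_1-\alpha_0=O(r^2)$, choosing $\delta$ before $\varepsilon$. Your reading that $\sigma$ takes values in $[0,1]$ is the intended one (it is exactly what Lemma~\ref{lm:cutoff} supplies, and the convexity step --- indeed the statement itself --- can fail for real-valued $\sigma$ of large amplitude), so making that restriction explicit is appropriate rather than a gap.
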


After these preparations, we now turn to the 

\begin{proof}[Proof of Theorem~\ref{thm:braiding}]
{\bf Step 0. }
We first put the open book $(B,\pi)$ into nice position with respect
to $\xi$. 
After a small transverse isotopy (fixing $(B,\pi,\xi)$) we may assume
that the link $L$ does not intersect the binding $B$.  
Let $\alpha$ be a contact form defining $\xi$ and supported
by $(B,\pi)$.  
By Lemma~\ref{lem:obd-cont} we find a homotopy of contact forms
$\{\alpha_t\}_{t\in [0,1]}$ supported by $(B,\pi)$, fixed outside a
neighbourhood of $B$, with $\alpha_0=\alpha$ and  
$$
  \alpha_1|_W=T(r^2d\theta+(1-r^2)d\phi)
$$
for some neighbourhood $W=\{r\le r_0\}$ of $B$ and some locally
constant function $T>0$ on $W$, where $(\phi,r,\theta)$ are the
standard open book coordinates near $B$. 
Using Lemma~\ref{lem:interpol2}, we can further deform the
contact form $\alpha_1$, through contact forms supported by $(B,\pi)$
and fixed outside a neighbourhood of $B$, to a contact form
$\alpha_2$ satisfying  
$$
  \alpha_2|_W=T(1-r^2)(r^2d\theta+d\phi)
$$
(after shrinking $W$). Finally, we perturb $\alpha_2$,
keeping it fixed near $B$, to a contact form $\alpha_3$ for which the
charactistic foliations on the pages are sufficiently generic (in a
sense that is made precise below). 

After applying Gray's theorem, we may assume that $\xi$ and $L$ are
fixed and the open book is moving by an isotopy. We rename the new
open book back to $(B,\pi)$ and 
the new contact form $\alpha_3$ back to $\alpha$. 
\medskip

After this preparatory step, we will keep $(B,\pi,\xi)$ fixed and only
deform the transverse link $L$. More precisely, 
we will construct a family $(\xi_t,L_t)_{t\in[0,A]}$ of contact
structures $\xi_t$ and links $L_t$ in $M$ with the following
properties: 
\begin{description}
\item [(a)] there exists a family of diffeomorphisms $(\psi_t)_{t\in[0,A]}$
    with 
$$
   \psi_0=\id,\quad \psi_t^*\xi_t=\xi,\quad \psi_t(B)=B,\quad
   \pi\circ\psi_t=\pi; 
$$
\item [(b)] $L_0=L$ and $L_t$ is transverse to $\xi_t$ for all $t\in[0,A]$; 
\item [(c)] $L_A$ is braided around $(B,\pi)$ and the intersection number
  the $i$-th component of $L_A$ with a page is $k_i$. 
\end{description}
Then the links $L_t':=\psi_t^{-1}(L_t)$ satisfy $L_0'=L$, $L_t'$ is 
transverse to $\psi_t^*\xi_t=\xi$ for all $t\in[0,A]$, and
$L_A'$ is braided around
$\bigl(\psi_A^{-1}(B),\pi\circ\psi_A\bigr)=(B,\pi)$ with intersection
numbers $k_i$. Thus $L_t'$ is the desired isotopy.   

We construct the family $(\xi_t,L_t)_{t\in[0,A]}$ in 6 steps. 

{\bf Step 1. }
Recall that from Step 0 we have a contact form $\alpha$ defining $\xi$
supported by $(B,\pi)$ which is given by 
$$
   \alpha=T(1-r^2)(d\phi+r^2d\theta)
$$
in a tubular neighbourhood $W=\{r\leq r_0\}$ of $B$ not meeting $L$.
Let us call an arc $\gamma$ of $L$ {\it good} if $\gamma$ is
positively transverse to the pages, and {\em bad} otherwise. Let $L_g$
denote the union of all good arcs and set 
$L_b:=L\setminus L_g$. Since transversality is an open condition,
we know that $L_g$ is a union of open arcs and (after a small isotopy
of $L$) we may assume that $L_b$ is a union of closed arcs. Our
objective is to achieve $L_b\cap S=\emptyset$, so that we can use
Lemma~\ref{lem:escape} to push $L_b$ into a smaller neighbourhood 
$$
   V:=\{r\leq r_0/2\}
$$ 
of $B$. 
 
By the last generic perturbation in Step 0 we can achieve that the set
$S$ is a union of submanifolds of dimensions $0,1,2$ of the following
types: 
\begin{description}
\item [S(1)] finitely many zeroes of birth-death type;
\item [S(2)] nondegenerate (elliptic or hyperbolic) zeroes varying in
  1-dimensional families with $\theta$;
\item [S(3)] isolated flow lines converging in forward time to birth-death
  type zeroes;
\item [S(4)] isolated flow lines converging in forward and backward time to
  nondegenerate hyperbolic zeroes;
\item [S(5)] nondegenerate closed orbits varying in
  1-dimensional families with $\theta$;
\item [S(6)] flow lines converging in forward time to nondegenerate
  hyperbolic zeroes (and in backward time to elliptic zeroes or closed
  orbits) varying in 1-dimensional families with $\theta$. 
\end{description}

Note that, for each stratum $S(n)$, $\overline{S(n)}\setminus S(n)$ is
contained in the union
of strata $S(m)$ with $m<n$. We will successively make $L_b$ disjoint
from $S(1),\dots,S(6)$. To begin, we make $L_b$ disjoint from the
$0$- and $1$-dimensional strata $S(1),\dots,S(4)$ simply by a small
perturbation of $L$. 

{\bf Step 2. } 
Let $\PP=S(5)$ be the union of closed orbits of $X$. Each connected component 
of $\PP$ is an embedded $2$-torus in $M$ fibered over $S^1$ by $\pi$.
Consider a point $p\in\PP$ belonging to a page $\Sigma$. Note that the
three planes $T_p\Sigma$, $\xi_p$ and $T_p\PP$ all contain the line
$\R X(p)$, so their intersection with a plane $E\subset T_pM$
transverse to $X(p)$ gives three lines 
in $E$.

Recall that the contact structures $\xi_t=(\Psi_t)_*\xi$ defined
above (see equation \eqref{eq:defcontflow}) 
converge at $p$ to $T_p\Sigma$ as $t\to\infty$. After applying the
homotopy $\bigl(\xi_t,L_t=\Psi_t(L)\bigr)$, $t\in[0,\tau]$, for
sufficiently large $\tau$ and renaming $(\xi_\tau,L_\tau)$ back to
$(\xi,L)$, we may hence assume that at all points $p\in\PP$ the three
lines corresponding to $\Sigma,\xi,\PP$ (and their coorientations) are
ordered as in Figure~\ref{figure:wrinkling}).

\begin{figure}
\centering
\includegraphics{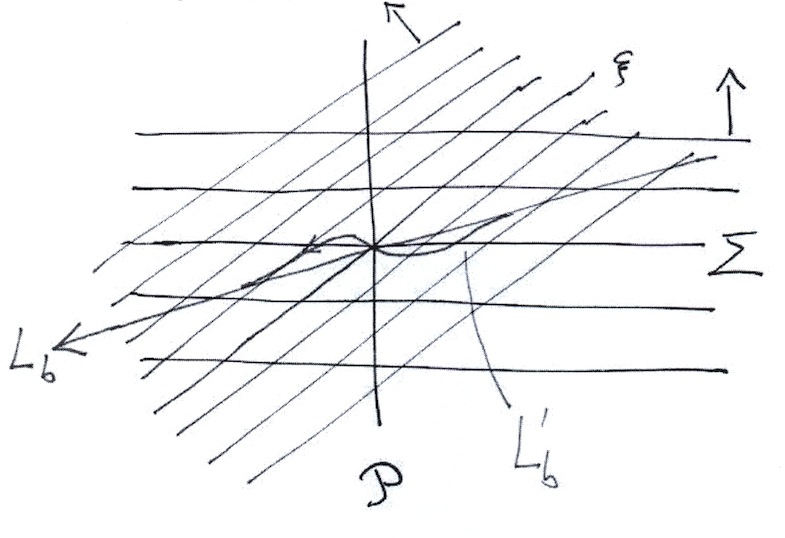}
\caption{Wrinkling}
\label{figure:wrinkling}
\end{figure}

{\bf Step 3. }
After a perturbation of $L$ we may assume that the intersection
$L_b\cap \PP$ is transverse. Consider a point $p\in L_b\cap \PP$. 
Pick local coordinates $(x,y,z)\in\R^3$ near $p=(0,0,0)$ in which
$\pi(x,y,z)=z$ and $\PP=\{y=0\}$, so $X(p)$ points in the
$x$-direction. By Step 2, the intersections of $T_p\Sigma$, $\xi_p$
and $T_p\PP$ with the plane $\{x=0\}$ are ordered as in Figure
\ref{figure:wrinkling}.  
After a further coordinate change near $p$ we may
assume that near $p$ the curve $L_b$ is a straight line segment
contained in the plane $\{x=0\}$. After zooming in near $0$ and
rescaling, we may assume that 
the contact planes in the neighbourhood are $C^0$-close to
$\xi_p$. Now we modify $L_b$ near $p$ within the plane $\{x=0\}$ as
shown in Figure \ref{figure:wrinkling}. The new arc
$L_b'$ is transversely isotopic to $L_b$, by an isotopy fixed outside
the neighbourhood of $p$ and always intersecting $\PP$
transversely at the only point $p$, and $L_b'$ is good near the
intersection point $p$. 

After applying this {\em wrinkling} operation to all intersections
with $\PP$, we may hence assume that $L$ avoids a closed neighbourhood
$U_\PP$ of $S(1)\cup\dots\cup S(5)$.  

{\bf Step 4. }
After Step 3 and a perturbation of $L$, the curve $L_b$ intersects
the 2-dimensional submanifold $S(6)$ transversely in finitely many
points in $M\setminus U_\PP$. Thus we can repeat steps 2 and 3 with
$S(5)$ replaced by $S(6)$ to make $L$ disjoint from an open
neighbourhood $U_S$ of $S$.  

{\bf Step 5. } 
Now we are in the position to use the flow $\Psi_t$ of $X_t$ to push
$L_b$ into $V$. According to Lemma~\ref{lem:escape}
there exists $\tau>0$ such that $\Psi_\tau(M\setminus
U_S)\subset V$ and thus $\Psi_\tau(L_b)\subset V$. Since $\Psi_t$
preserves the binding and pages of the open book $(B,\pi)$, the
homotopy $(\xi_t,L_t)_{t\in[0,\tau]}$ defined by
$$
   \xi_t:=(\Psi_t)_*\xi,\qquad L_t:=\Psi_t(L)
$$
satisfies properties (a) and (b) above. Moreover, the bad
(unbraided) part of $\LL_\tau$ is given by $\Psi_\tau(L_b)$ and hence
contained in $V$. Note that the new contact form $\alpha_\tau$ is
given on $V$ by  
$$
  \alpha_\tau=T(1-r^2)\Bigl(d\phi+(1+\tau)r^2d\theta\Bigr).
$$
{\bf Step 6. }
Consider the contact structure $\xi_\tau=\ker\alpha_\tau$ from Step 5
on $V=S^1\times D^2$. After rescaling in $r$ we may assume that 
$\xi_\tau=\xi_{st}=\ker(d\phi+r^2d\theta)$. By a result of
Bennequin (\cite{Ben}, see also~\cite{Pav} for a short exposition) we
can transversely isotope $L_\tau$ relative to $\p V$ (fixing
$\xi_\tau$) to a link which is disjoint from the binding
$\{r=0\}$ and positively transverse to the pages $\{\theta=const\}$ in
$V$. Since $L_\tau$ was already braided outside $V$, the resulting
link $L_A\subset M$ is braided around $(B,\pi)$. 

Finally, let $K$ be the maximum of the intersection 
numbers of the components of $L_A$ with a page of $(B,\pi)$. Then for
any integer $k_i\geq K$ we can further transversely isotope the $i$-th
component (pulling it through $B$) in such a way that at the end it is
again braided and its intersection number with a page equals $k_i$.  
This concludes the proof of Theorem~\ref{thm:braiding}. 
\end{proof}

\section{Standardization of a contact open book near a 
transverse knot}\label{sec:standard} 

In this section we prove the following improvement of
Theorem~\ref{thm:braiding}. 

\begin{cor}\label{cor:braiding}
Let $(M,\alpha)$ be a closed oriented contact $3$-manifold.
Let $L=\cup_{i=1,\dots,\ell}L_i\subset M$ be
a link (with components $L_i$) positively transverse to the contact
structure $\ker\alpha$. Then there exists a natural number $K$ with the
following property. For every collection of numbers $k_i\ge K$,
$i=1,\dots,\ell$, there exists a contact form $\beta$ with
$\ker\beta=\ker\alpha$ and an open book $(B,\pi)$ supporting $\beta$
such that $L$ is disjoint from $B$ and positively 
transverse to the pages, and the intersection number of $L_i$ with a
page is $k_i$. Moreover, in coordinates $(\theta,r,\phi)\in S^1\times
D^2$ near $L_i=S^1\times\{0\}$ we have 
$$
   \beta=\alpha = f(d\theta+r^2d\phi)\quad \text{and}\quad
   \pi(\theta,r,\phi)= k_i\theta. 
$$
\end{cor}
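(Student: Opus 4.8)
The plan is to upgrade Theorem~\ref{thm:braiding} in two ways: first, to phrase the conclusion in terms of a single contact form with a genuinely standard model near $L$, and second, to get rid of the moving open book by absorbing the isotopy into the contact form. I would start from Theorem~\ref{thm:braiding}: choose any open book $(B_0,\pi_0)$ supporting $\ker\alpha$ (one exists by Giroux's existence theorem~(3)), and apply Theorem~\ref{thm:braiding} to the transverse link $L$ to obtain isotopies $(B_t,\pi_t,L_t)$ with $(B_1,\pi_1)$ still supporting $\ker\alpha$, $L_1$ braided around $(B_1,\pi_1)$, and the $i$-th component of $L_1$ meeting a page in exactly $k_i$ points, provided $k_i\ge K$ for the constant $K$ produced there. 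Pulling back by the final diffeomorphism of the ambient isotopy that carries $(B_1,\pi_1,L_1)$ to $(B_1,\pi_1,L)$ — note the isotopies above move everything, but composing with the reverse ambient isotopy we may instead keep $L$ fixed and move the open book, then rename — we obtain an open book $(B,\pi)$ supporting $\ker\alpha$ and a contact form $\alpha'$ with $\ker\alpha'=\ker\alpha$ such that $L$ is disjoint from $B$, positively transverse to the pages, and $[L_i]\cdot[\text{page}]=k_i$. This already gives everything in the Corollary except the explicit standard form of $\beta$ near each $L_i$.

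\textbf{Standardizing near $L$.} The remaining task is local near each component $L_i$. Since $L_i$ is a transverse knot braided around $(B,\pi)$ with page-intersection number $k_i$, the fibration $\pi$ restricted to a tubular neighbourhood $S^1\times D^2$ of $L_i$ (with $L_i=S^1\times\{0\}$, $S^1$-coordinate $\theta$) is, after an isotopy of the neighbourhood, the map $(\theta,r,\phi)\mapsto k_i\theta$: a $k_i$-fold covering of $S^1$ on the core circle with the meridian disks mapping by degree $0$, exactly because $L_i$ hits each page $k_i$ times and is positively transverse to the pages. So I can arrange $\pi(\theta,r,\phi)=k_i\theta$ near $L_i$ by a diffeomorphism supported near $L_i$, which I absorb into the open book (this only moves $(B,\pi)$ in a small neighbourhood of $L$, not $B$ itself). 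It remains to bring the contact form into the model $f(d\theta+r^2d\phi)$ near $L_i$. Here I would invoke the standard neighbourhood theorem for transverse knots: any transverse knot in a contact $3$-manifold has a tubular neighbourhood contactomorphic to a neighbourhood of $S^1\times\{0\}$ in $(S^1\times D^2,\ker(d\theta+r^2d\phi))$, and by rescaling $r$ one can make this neighbourhood as desired; multiplying the model $1$-form by a positive function $f$ (the Gray-type scaling factor coming from the contactomorphism) gives $\beta=\alpha'=f(d\theta+r^2d\phi)$ with $\ker\beta=\ker\alpha$. The one subtlety is compatibility: I must choose the contactomorphism of the transverse-knot neighbourhood theorem so that it also respects (or can be isotoped to respect) the already-standardized fibration $\pi=k_i\theta$; since the standard model $\ker(d\theta+r^2d\phi)$ is itself compatible with the open book page projection $\theta\mapsto\theta$ on its own core (rescaled by $k_i$), and any two such normalizations near a transverse knot differ by a diffeomorphism isotopic to the identity through neighbourhood-preserving maps, this can be arranged by a further small isotopy, applied to $(B,\pi)$ near $L$ only.

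\textbf{Bookkeeping and the main obstacle.} Finally I would record that $\beta$ is supported by $(B,\pi)$: $\beta$ equals a positive rescaling of $\alpha'$, which defines $\ker\alpha$, and $d\beta$ is positive on the pages because this was true for $\alpha'$ and the modifications were either conformal rescalings or supported near $L$ where the model $d\beta=df\wedge(d\theta+r^2d\phi)+2fr\,dr\wedge d\phi$ is visibly positive on $\{\theta=\text{const}\}$ for suitable $f$ (constant in $r,\phi$ near $L_i$ suffices, as in the coordinate expressions appearing in Section~\ref{sec:braiding}). The constant $K$ is the one from Theorem~\ref{thm:braiding} together with, if needed, the maximum of the initial page-intersection numbers, exactly as in the last paragraph of the proof of that theorem. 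I expect the main obstacle to be the compatibility issue in the previous paragraph: reconciling the \emph{contact} normalization near $L_i$ (transverse-knot neighbourhood theorem) with the \emph{open-book} normalization ($\pi=k_i\theta$) simultaneously, and checking that one can interpolate between the given contact form and the model through contact forms that keep $L$ positively transverse to the pages and the open book fixed away from $L$ — this is precisely the kind of statement Lemmas~\ref{lm:cutoff}, \ref{lem:obd-cont} and \ref{lem:interpol2} are designed to handle, so the argument should run by applying those interpolation lemmas in a neighbourhood of $L$, with the small function estimates guaranteeing that $d\theta\wedge d\beta>0$ (equivalently, positivity on pages) is preserved throughout.
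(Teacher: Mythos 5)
Your overall plan matches the paper's: apply Theorem~\ref{thm:braiding} to get a supporting open book and a braided link with the right intersection numbers, then standardize $\beta$ and $\pi$ in a tubular neighbourhood of each $L_i$. You also correctly identify the crux — that the contact normalization (transverse-knot neighbourhood theorem) and the open-book normalization ($\pi=k_i\theta$) have to be achieved \emph{simultaneously} — but the proposal stops short of resolving it, and the remedy you sketch does not go through as stated.

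There are two genuine gaps. First, you propose to standardize $\pi$ by a diffeomorphism of the neighbourhood, absorb it into $(B,\pi)$, then standardize the contact form by a second diffeomorphism from the transverse-knot neighbourhood theorem. But this second contactomorphism will in general destroy the first normalization: after it, $\pi$ is no longer $k_i\theta$, and the further ``small isotopy'' you invoke to restore $\pi$ has no reason to preserve either $\ker\alpha$ standardness or the positivity condition $d\pi\wedge d\alpha>0$. You cannot decouple the two standardizations by alternating ambient diffeomorphisms; the paper instead modifies $\pi$ and $\alpha$ directly (not via a global diffeomorphism), by a cutoff interpolation that simultaneously carries $\pi$ to $n\theta$ and $\alpha$ to $\alpha_\st$ near the core while preserving $d\pi\wedge d\alpha>0$ throughout — this is Lemma~\ref{lem:knot-openbook-structure}, proved in four steps with explicit estimates controlling the mixed term $f\,d\rho\wedge d\alpha$. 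Second, the lemmas you cite (Lemmas~\ref{lm:cutoff}, \ref{lem:obd-cont}, \ref{lem:interpol2}) are designed for the \emph{binding} model $T(r^2d\theta+(1-r^2)d\phi)$, where $\theta$ is the page direction transverse to the core; near a braided transverse knot the local model is $d\theta+r^2d\phi$ with $\theta$ going \emph{along} the core, so those lemmas do not directly apply. The paper addresses this by proving new transverse-knot analogues, Lemmas~\ref{lem:knot-openbook-form} and~\ref{lem:knot-openbook-structure}, which reuse only the cutoff Lemma~\ref{lm:cutoff} as a black box. So while your intuition points at the right techniques, the ``main obstacle'' you name is precisely the content of those two lemmas, and the argument you offer in their place is not a proof.
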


\begin{proof}
By Theorem~\ref{thm:braiding} we find a contact form $\beta$ and an
open book $(B,\pi)$ having all the properties in the corollary except
the last one, i.e.~$\beta$ need not agree with $\alpha$ near
$L$ and $\pi$ need not be standard as above. 
Consider a component $L_i$ of $L$. It has a tubular neighbourhood
$V\cong S^1\times D^2_\delta$ with coordinates $\theta\in S^1$ and polar
coordinates $(r,\phi)$ on the disk $D^2_\delta$ in which the contact
structure $\xi=\ker\beta$ is given by 
$$
   \xi_\st = \ker\alpha_\st, \qquad  
   \alpha_\st = d\theta+r^2d\phi.
$$
By Lemma~\ref{lem:knot-openbook-structure} below there exist 
\begin{itemize}
\item an open book $(B,\pi')$ on $M$ which agrees with $(B,\pi)$
  outside $V$ such that $\pi'(\theta,r,\phi)=k_i\theta$ near $L_i$, and 
\item a contact form $\beta'$, supported by $(B,\pi')$ and defining
  $\xi$, which coincides with $\beta$ outside $V$ and with
  $\alpha_\st$ near $L_i$. 
\end{itemize}
By Lemma~\ref{lem:knot-openbook-form} below, $\beta'$ can be modified 
near $L_i$ to a contact form $\beta''$, still supported by $(B,\pi')$ and
defining $\xi$, such that $\beta''=\alpha$ near $L_i$. Then $\beta''$
and $(B,\pi')$ have the desired properties.
\end{proof}

It remains to prove the two lemmas used in the proof of
Corollary~\ref{cor:braiding}. 
We consider a tubular neighbourhood $V=S^1\times D^2_\delta$ of the
knot $K=\{r=0\}$ with coordinates $(\theta,r,\phi)$ and the contact
structure $\xi_\st=\ker\alpha_\st$ as above. Note that an open book
without binding on $V$ is simply a submersion $\pi:V\to
S^1$, and it supports a contact form $\alpha$ iff $d\pi\wedge
d\alpha>0$. 

\begin{lemma}\label{lem:knot-openbook-structure}
Let $\alpha$ be a contact form on $V$ defining $\xi_\st$ and
$\pi:V\to S^1$ be a submersion such that $d\pi\wedge
d\alpha>0$ and $\pi|_K:K\to S^1$ is a covering of degree
$n\in\N$. Then there exist 
\begin{itemize}
\item a submersion $\pi':V\to S^1$ which agrees with $\pi$ 
  near $\p V$ such that $\pi'(\theta,r,\phi)=n\theta$ near $K$, and  
\item a contact form $\alpha'$ on $V$, defining $\xi_\st$ and
  satisfying $d\pi'\wedge d\alpha'>0$, which coincides with $\alpha$
  near $\p V$ and with $\alpha_\st$ near $K$.
\end{itemize}
\end{lemma}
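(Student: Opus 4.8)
The plan is to construct $\pi'$ and $\alpha'$ in two stages: first fix the open book, then adjust the contact form to respect it. Since $\pi|_K$ is an $n$-fold covering of $S^1$, after a diffeomorphism of $V$ fixing $\partial V$ and preserving $K$ (straightening the fibers of $\pi$ along $K$, which is possible because $\pi$ is a submersion transverse to $K$ up to the covering) we may assume $\pi(\theta,0,\phi)=n\theta$. Near $K$ write $\pi(\theta,r,\phi)=n\theta+O(r)$; I would like to interpolate to the linear model $n\theta$. The obstruction is that the interpolated submersion must stay a submersion and must remain positively compatible with \emph{some} contact form. The cleanest route: first deform $\pi$ (rel $\partial V$) through submersions so that near $K$ it equals $n\theta$ exactly, using a cutoff $\rho(r)$ as in Lemma~\ref{lm:cutoff} to write $\pi_s = n\theta + (1-s\rho(r))\,\bigl(\pi(\theta,r,\phi)-n\theta\bigr)$; the bound $|r\rho'(r)|<\varepsilon$ guarantees $d\pi_s$ stays close to $d(n\theta)+ (\text{small})$ and hence is nonvanishing for $\varepsilon$ small, while $d\pi_s\wedge d\alpha$ can be kept positive by simultaneously shrinking the neighbourhood. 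This produces $\pi'$ with $\pi'=n\theta$ near $K$ and $\pi'=\pi$ near $\partial V$, together with a contact form (still called $\alpha$, after a Gray-type adjustment keeping $\xi_\st$ fixed) supported by $(B,\pi')$.

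The second stage is to arrange $\alpha=\alpha_\st$ near $K$ while keeping $d(n\theta)\wedge d\alpha>0$. Since $\ker\alpha=\xi_\st=\ker(d\theta+r^2d\phi)$, we have $\alpha=g\,\alpha_\st$ for a positive function $g$ on $V$; rescaling the contact form we may assume $g(\theta,0,\phi)\equiv 1$ after a homotopy, but we also need the derivatives of $g$ in the $r$-direction controlled. The key computation is $d(n\theta)\wedge d\alpha_\st = d(n\theta)\wedge(2r\,dr\wedge d\phi)= 2nr\,d\theta\wedge dr\wedge d\phi>0$ (for the correct orientation), so the linear model is automatically positively supported by $\pi'=n\theta$ away from $r=0$, and at $r=0$ one checks directly that $d(n\theta)\wedge d\alpha_\st$ is still a positive volume form (using the $r^2 d\phi$ term's derivative). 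Then I would interpolate $\alpha_s = (1-s\sigma(r))\,\alpha + s\sigma(r)\,\alpha_\st$ with $\sigma$ a cutoff from Lemma~\ref{lm:cutoff}; this is exactly the situation of Lemma~\ref{lem:interpol2} with $d\theta$ replaced by $dn\theta$ (which only changes things by the constant $n$), provided the two contact forms agree to order $O(r^2)$ at $K$. To get the $O(r^2)$ matching I first multiply $\alpha$ by an explicit positive function (a first-order correction in $r$) to cancel the linear-in-$r$ discrepancy between $\alpha/g$ and $\alpha_\st$; since both define the same plane field $\xi_\st$, the $O(r^2)$ condition can be met after this normalization.

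The main obstacle I expect is the \emph{simultaneous} preservation of two conditions throughout the interpolations — namely that each intermediate 1-form is contact \emph{and} that it is positively compatible with the (already fixed) open book $\pi'$. Lemma~\ref{lem:interpol2} is tailored to handle exactly this pair of conditions for convex combinations when the forms match to second order, so the real work is the bookkeeping that reduces each step to its hypotheses: checking that the relevant $d\pi'\wedge d(\cdot)\ge\beta>0$ bound survives near $\partial V$ where $\pi'=\pi$ and the form is the original $\alpha$ (there the inequality holds by hypothesis and is preserved under small perturbations), and in the transition region where both the open book and the form are being modified, which is where one must shrink $\delta$ and the cutoff parameters $\varepsilon$ in tandem. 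Once those estimates are in place the statement follows by concatenating the homotopies and applying Gray stability to move everything back so that $\xi_\st$ is literally fixed and only $\pi$ and the contact form (within the class $\xi_\st$) have changed.
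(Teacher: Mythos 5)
Your two-stage plan (first make $\pi=n\theta$ near $K$, then make $\alpha=\alpha_\st$ near $K$) does not match the order in the paper, and the mismatch is not cosmetic: it creates a genuine gap in your Stage~1. To interpolate $\pi$ to $n\theta$ by $\pi_s=n\theta+(1-s\rho(r))(\pi-n\theta)$ and control the error term $\,(\pi-n\theta)\,\rho'(r)\,dr\wedge d\alpha$, you need $\pi-n\theta$ to vanish on $K$, so that $|\pi-n\theta|\le Cr$ can be paired with $|r\rho'(r)|\le\eps$. You try to arrange $\pi|_K=n\theta$ by a diffeomorphism $\Psi$ of $V$ fixing $\p V$. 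But such a $\Psi$ does not preserve $\xi_\st$ near $K$ (at $r=0$ one has $\Psi^*\alpha_\st=\psi'(\theta)\,d\theta$), so after pulling back you are no longer in the category of contact forms defining $\xi_\st$ that the lemma requires. The ``Gray-type adjustment'' you invoke to restore $\xi_\st$ is itself a diffeomorphism and would move $\pi$ again, so it is not clear you ever land in a configuration where $\pi|_K=n\theta$ \emph{and} the contact structure is literally $\xi_\st$. The paper sidesteps this entirely: it introduces a new circle coordinate $\Theta$ on $K$ (a passive relabelling, not a diffeomorphism of $V$), arranges $\pi=n\Theta$ and $\alpha=\alpha_\st$ in the $(\Theta,r,\phi)$-coordinates, and only at the very end converts $n\Theta$ to $n\theta$. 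That last conversion is cheap precisely because by then $d\alpha_\st=2r\,dr\wedge d\phi$, so the troublesome $\rho'(r)\,dr$ error term wedges to zero and no estimate on $\pi-n\theta$ along $K$ is needed — the argument is a pure convexity observation.

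Your Stage~2 also reaches for the wrong lemma. Lemma~\ref{lem:interpol2} is about contact forms near a \emph{binding} (there $\phi$ is along $S^1$ and $\theta$ is the disk angle, i.e.\ the fibration coordinate), and its $O(r^2)$ hypothesis is there to keep the convex combination contact when the two forms define \emph{different} contact structures. Here the situation is opposite: $\theta$ runs along $K$, $\pi'=n\theta$ has no binding, and $\alpha$ and $\alpha_\st$ define the \emph{same} contact structure $\xi_\st$. The right tool is Lemma~\ref{lem:knot-openbook-form} from the same section: writing $\alpha=f_0\alpha_\st$, $\alpha_\st=f_1\alpha_\st$ and interpolating the scalar coefficients gives a form $f\alpha_\st$ that is automatically contact (positive multiple of a contact form), and the support condition reduces to the one-line inequality $2f+r\,\p_rf>0$, for which the cutoff estimate $|r\rho'|\le\eps$ is exactly enough. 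No $O(r^2)$ matching, no first-order correction factor, and no coordinate-role swap is needed. If you insist on using Lemma~\ref{lem:interpol2} you must, at minimum, restate it in the correct coordinates and justify why the first-order correction you propose does not destroy positivity of $d\pi'\wedge d\alpha$; as written that step is asserted, not proved.
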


Note that for the submersion $\pi'(\theta,r,\phi)=n\theta$ the
condition $d\pi'\wedge d\alpha'>0$ is just $d\theta\wedge
d\alpha'>0$. 
 
\begin{lemma}\label{lem:knot-openbook-form}
Let $\alpha_0,\alpha_1$ be two contact forms on $V$ defining $\xi_\st$  
such that $d\theta\wedge d\alpha_i>0$ for $i=0,1$.
Then there exists a contact form $\alpha$ on $V$, defining $\xi_\st$
and satisfying $d\theta\wedge d\alpha>0$, which coincides with
$\alpha_0$ near $\p V$ and with $\alpha_1$ near $K$.
\end{lemma}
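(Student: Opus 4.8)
\textbf{Proof plan for Lemma~\ref{lem:knot-openbook-form}.}
The plan is to build $\alpha$ by interpolating from $\alpha_1$ (near $K$) to $\alpha_0$ (near $\p V$) using a radial cutoff, after first arranging that the two contact forms agree to high order along $K$. First I would observe that the constraint ``$\ker\alpha_i=\xi_\st$'' means $\alpha_i=g_i\,\alpha_\st$ for positive functions $g_i$ on $V$, so the problem is really about interpolating the conformal factors while keeping the two companion conditions: the contact condition $\alpha\wedge d\alpha>0$ (automatic for any positive multiple of $\alpha_\st$) and the fibration-compatibility $d\theta\wedge d\alpha>0$. The latter is the only real constraint, and it is an open convex condition on the 1-jet of $\alpha$ in the $r,\phi$ directions, so convex combinations of forms satisfying it again satisfy it; the danger is only in the region where the cutoff function has large derivative.

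The key steps, in order, are: (1) write $\alpha_i=g_i\alpha_\st$ with $g_i>0$; since both sides define $\xi_\st$ this is forced, and $d\theta\wedge d\alpha_i>0$ becomes a positivity condition on $dg_i\wedge d\theta\wedge(r^2 d\phi)+g_i\,d\theta\wedge d(r^2d\phi)$, i.e.\ roughly $g_i r + r^2 \p_r g_i>0$ after unwinding, in particular $g_i(0,0,\phi)$ is irrelevant to leading order and the condition is robust near $r=0$. (2) Precompose $\alpha_1$ with a radial rescaling $(\theta,r,\phi)\mapsto(\theta,\lambda r,\phi)$ for small $\lambda$ and pull back; this is a contactomorphism-type move that keeps $\ker=\xi_\st$ (up to a further conformal factor) and $d\theta\wedge d(\cdot)>0$, and it makes $g_1$ as $C^1$-close as we like, on a small disk, to its constant value $g_1(\cdot,0,\cdot)$ — and by a rotation/scaling in $\phi,\theta$ we may further normalize so that near $K$ the form $\alpha_1$ equals a positive function of $\theta$ only times $\alpha_\st$, hence is conformally $\alpha_\st$ itself near $K$ after one more harmless rescaling. (3) Now apply Lemma~\ref{lem:interpol2} (or directly Lemma~\ref{lm:cutoff}): with $\sigma=\rho(r)$ a cutoff that is $1$ near $r=0$ and $0$ near $r=\delta$, set $\alpha:=(1-\sigma(r))\alpha_0+\sigma(r)\alpha_1$. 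The difference $\alpha_1-\alpha_0=(g_1-g_0)\alpha_\st$ vanishes at $r=0$ but not necessarily to order $r^2$, so to invoke Lemma~\ref{lem:interpol2} verbatim I would first subtract off the linear-in-coordinates part: write $g_1-g_0 = c(\phi,\theta) + O(r)$ along $K$ and absorb the constant by the normalization in step (2), leaving $\alpha_1-\alpha_0=O(r)$, which after another small radial rescaling in step (2) can be upgraded to $O(r^2)$. Then Lemma~\ref{lem:interpol2} applies directly and yields that $\alpha$ is contact with $d\theta\wedge d\alpha>0$; by construction $\ker\alpha=\xi_\st$ since $\alpha$ is a positive combination of multiples of $\alpha_\st$, hence itself a positive multiple of $\alpha_\st$.

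The main obstacle I expect is step (2)/(3): making $\alpha_1-\alpha_0$ genuinely $O(r^2)$ near $K$ without destroying either $\ker=\xi_\st$ or $d\theta\wedge d(\cdot)>0$. The cleanest route is probably to not insist on $O(r^2)$ but instead to use a slightly different interpolation adapted to the form $\alpha=h\,\alpha_\st$: interpolate the \emph{conformal factors}, $h:=(1-\rho(r))g_0+\rho(r)g_1$, and then $\alpha:=h\,\alpha_\st$ automatically has kernel $\xi_\st$, is automatically contact, and the only check is $d\theta\wedge d(h\alpha_\st)>0$, which expands to $h\cdot(d\theta\wedge d(r^2 d\phi)) + \rho'(r)(g_1-g_0)\,dr\wedge d\theta\wedge(r^2 d\phi)$; the first term is a fixed positive quantity of size $\sim h r$, and the error term has size $\sim r^2|\rho'(r)|\,|g_1-g_0|$, so choosing $\rho$ via Lemma~\ref{lm:cutoff} with $|r\rho'(r)|$ small and supported in a region where $|g_1-g_0|$ is bounded makes the error term smaller than the main term, since $r^2|\rho'|\,|g_1-g_0| = r|r\rho'|\,|g_1-g_0| \lesssim r\cdot\varepsilon$. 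This avoids all jet-matching and is the version I would write up; it reduces the lemma to the elementary estimate above plus the cutoff from Lemma~\ref{lm:cutoff}.
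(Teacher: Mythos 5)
Your final paragraph is essentially the paper's proof: both write $\alpha_i=f_i\,\alpha_\st$ and interpolate the conformal factors $f:=(1-\rho(r))f_0+\rho(r)f_1$, then note that $d\theta\wedge d(f\alpha_\st)>0$ reduces to $2f+r\,\partial_rf>0$, and that the error term coming from $\rho'$ is controlled by $|r\rho'(r)|\le\eps$ via Lemma~\ref{lm:cutoff}. The detour in steps (2)--(3) (trying to force $\alpha_1-\alpha_0=O(r^2)$ so Lemma~\ref{lem:interpol2} applies verbatim) is unnecessary and you correctly abandon it; once you interpolate the scalar factors rather than the $1$-forms, the kernel is automatically $\xi_\st$, the contact condition is automatic, and no jet-matching along $K$ is needed. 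The only cosmetic difference is in how the positivity estimate is organized: the paper bounds $2f\ge c_0$ and $|r\,\partial_rf|\le c_1\delta+c_2\eps$ separately, whereas you observe that $(1-\rho)(2f_0+r\,\partial_rf_0)+\rho(2f_1+r\,\partial_rf_1)$ is already a convex combination of positive quantities, leaving only the $r\rho'(f_1-f_0)$ error to control. This is marginally cleaner (it removes the need for $\delta$ small), but the two arguments are the same in substance.
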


\begin{proof}[Proof of Lemma~\ref{lem:knot-openbook-form}]
We write $\alpha_i=f_i\alpha_\st$ for functions $f_i:V\to\R_+$ and
$\alpha=f\alpha_\st$, where
$$
   f := \bigl(1-\rho(r)\bigr)f_0 + \rho(r)f_1.
$$
Here $\rho:[0,\delta]\to[0,1]$ is a nonincreasing function as in
Lemma~\ref{lm:cutoff} which equals
$1$ near $0$ and $0$ near $\delta$ and satisfies
$|r\rho'(r)|\leq\eps$, for arbitrarily small constants
$\eps,\delta>0$ that will be chosen below. Now
$$
   d\theta\wedge d\alpha = d\theta\wedge\frac{\p f}{\p r}dr\wedge
   r^2d\phi + f\,d\theta\wedge 2r\,dr\wedge d\phi = \left(2f +
     r\frac{\p f}{\p r}\right) d\theta\wedge r\,dr\wedge d\phi
$$
is positive iff 
\begin{equation}\label{eq:f-pos}
   2f + r\frac{\p f}{\p r}>0.
\end{equation} 
To show~\eqref{eq:f-pos} we estimate with positive constants
$c_0,c_1,c_2$ depending only on $f_0,f_1$:
\begin{gather*}
   2f \geq 2\min\{\min_Vf_0,\min_Vf_1\} \geq c_0 > 0, \cr 
   \left|r\frac{\p f}{\p r}\right| 
   \leq r\left|(1-\rho)\frac{\p f_0}{\p r} + \rho\frac{\p f}{\p
       r}\right| + |r\rho'(r)|\,|f_1-f_0| 
   \leq c_1\delta + c_2\eps.
\end{gather*}
Thus for $\eps,\delta$ sufficiently small~\eqref{eq:f-pos} holds and
Lemma~\ref{lem:knot-openbook-form} follows. 
\end{proof}

\begin{proof}[Proof of Lemma~\ref{lem:knot-openbook-structure}]
{\bf Step 1. }
We make a coordinate change of $V$ of the form
$$
   (\theta,r,\phi)\mapsto \bigl(\Theta=\Theta(\theta),r,\phi\bigr), 
$$
where the diffeomorphism $\theta\mapsto \Theta(\theta)$ of $K\cong
S^1$ is chosen in such a way that in the new coordinates the covering
on $K$ is given by
\begin{equation}\label{eq:covering}
   \pi(\Theta,0,\phi)=n\Theta. 
\end{equation}
Note that $d\theta\wedge d\alpha$ is positive at $\{r=0\}$ by the
contact condition, so after shrinking $V$ we may assume it is
positive on the whole of $V$ and thus
$$
   d\Theta\wedge d\alpha = \Theta'(\theta)d\theta\wedge d\alpha > 0.    
$$
{\bf Step 2. }
Since the 1-forms $d\pi$ and $n\,d\Theta$ are cohomologous, we have
$$d\pi=n\,d\Theta+df$$ for some function 
$f:V\to\R$, hence (after adding a constant to $f$ if necessary)
$\pi=n\Theta+f$. Equation~\eqref{eq:covering} then implies that
$f|_K=0$, i.e.~we have $|f|\leq C_fr$ for some constant $C_f$
depending only on $f$. We take a nondecreasing function
$\rho:[0,\delta]\to[0,1]$ as provided by Lemma~\ref{lm:cutoff}
(replacing $\rho$ by $1-\rho$) which equals $0$ near $r=0$ and $1$
near $r=\delta$ and satisfies $r\rho'(r)\leq \eps$. Consider the map
$$
   \pi_1:=n\Theta+\rho(r)f:V\to S^1.
$$
{\bf Claim: }For $\eps$ sufficiently small we have
$d\pi_1\wedge d\alpha>0$. 

To prove this, we use $\alpha=h(d\theta+r^2d\phi)$ to write out 
\begin{align}\label{eq:main}
   d\theta_1\wedge d\alpha
   &= d(n\Theta+\rho f)\wedge d\alpha \cr
   &= (n\,d\Theta+\rho\,df)\wedge d\alpha + f\,d\rho\wedge
   d\Bigl(h(d\theta+r^2d\phi)\Bigr) \cr 
   &= (n\,d\Theta+\rho\,df)\wedge d\alpha +
   f\rho'(r)[h_\phi-r^2h_\theta]dr\wedge d\phi\wedge d\theta.
\end{align}
Since both expressions $$S_1:=n\,d\Theta\wedge d\alpha$$ and
$$S_2:=(nd\Theta+df)\wedge d\alpha=d\pi\wedge d\alpha$$
are positive, their convex combination 
$$(1-\rho)S_1+\rho S_2=
(n\,d\Theta+\rho\,df)\wedge d\alpha\geq min(S_1,S_2)$$
is bounded from below by a constant independent of $\rho$.
So it remains to estimate the last summand in~\eqref{eq:main}. 
Note that the $\phi$-derivative of any function vanishes at $\{r=0\}$
and thus we have an estimate $|h_\phi|\leq C_hr$ with a constant $C_h$
depending only on $h$. Using this as well as $|f|\leq C_fr$ and
$r\rho'(r)\leq \eps$ we estimate 
$$
   |f\rho'(r)[h_\phi-r^2h_\theta]|\leq C_fr\rho'(r)C_hr\leq C_fC_h\eps
   r. 
$$
Since $r\,dr\wedge d\phi\wedge d\theta$ is a smooth volume form, this
shows that the last summand in~\eqref{eq:main} becomes 
arbitrarily small for $\eps$ small and thus proves the claim. 

Thus $\pi_1:V\to S^1$ is a submersion, satisfying $d\pi_1\wedge
d\alpha>0$, which agrees with $\pi$ near $\p V$ and with $n\Theta$
near $K$. After shrinking $V$ and renaming $\pi_1$ back to $\pi$ we
may hence assume that $\pi=n\Theta$ on $V$.  

{\bf Step 3. }
Note that, after Step 2, $\alpha$ and $\alpha_\st$ are two contact
forms defining $\xi_\st$ with $d\Theta\wedge\alpha>0$ and
$d\Theta\wedge\alpha_\st>0$. So by Lemma~\ref{lem:knot-openbook-form}
(in coordinates $(\Theta,r,\phi)$) we find a contact form $\beta$ on
$V$, defining $\xi_\st$ and satisfying $d\theta\wedge d\beta>0$, which
coincides with $\alpha$ near $\p V$ and with $\alpha_\st$ near $K$.
After shrinking $V$ and renaming $\beta$ back to $\alpha$, we may
hence assume that $\alpha=\alpha_\st$ and $\pi=n\Theta$ on $V$. 

{\bf Step 4. }
It remains to modify $\pi$ such that it equals $n\theta$ near $K$. 
The argument is similar to Step 2 but simpler: The forms $d\Theta$ and
$d\theta$ are cohomologous, so we have
$n\,d\Theta=n\,d\theta+df$ for some function $f$. In other words
(after adding a constant to $f$ if necessary) $n\Theta=n\theta+f$. We
define 
$$
   \pi':=n\theta+\rho(r) f:V\to S^1
$$ 
with a cutoff function $\rho$ as in Step 2. Since
$d\alpha_\st=2r\,dr\wedge d\phi$ we have\\ $f\,d\rho\wedge
d\alpha_\st=0$ and thus
$$
   d\pi'\wedge d\alpha_\st = (n\,d\theta + \rho\,df)\wedge
   d\alpha_\st,
$$
which is a convex combination of the positive terms $n\,d\theta\wedge
d\alpha_\st$ and $n\,d\Theta\wedge d\alpha_\st$ and hence positive. 
This concludes the proof of Lemma~\ref{lem:knot-openbook-structure}. 
\end{proof}

\section{Contact open books with boundary}\label{sec:relGiroux}

Consider an oriented contact 3-manifold $(N,\alpha)$ whose boundary $\p
N=T_1\amalg\dots\amalg T_\ell$ is a union of 2-tori. We assume that
$\alpha$ is $T^2$-invariant near each boundary component $T_i$,
i.e.~there exists a collar neighbourhood $K_i\cong [l_i,R_i]\times
T^2$ of $T_i=\{r=l_i\}\cong T^2$ with oriented coordinates
$(r,\phi,\theta)$ in which $\alpha$ is given by
\begin{equation}\label{eq:alpha_h}
   \alpha_{h}=h_{1}(r)d\phi+h_{2}(r)d\theta
\end{equation}
for some immersion $h:[l_i,R_i]\to\C$ satisfying 
\begin{equation}\label{eq:h}
   h_1'h_2-h_2'h_1 > 0.  
\end{equation}
(The immersion $h$ of course depends on $i$, but we suppress this
dependence to keep the notation simple). 
We are interested in $\alpha$ that are supported by an open book
decomposition $(B,\pi)$ in the usual sense (i.e.~$\alpha>0$ on the
binding $B$ and $d\alpha>0$ on the interior of the pages), where
$B\subset\inn N$ and the projection $\pi:N\setminus B \to S^1$ is in
the above coordinates near each $T_i$ a linear projection
$\pi(r,\phi,\theta)=a_{1}\phi+a_{2}\theta$. Here $a\in\Z^2$ is
some integer vector (again depending on $i$) and positivity of
$d\alpha$ on the pages, $d\pi\wedge d\alpha>0$, is equivalent to 
$$
   h_1'a_2-h_2'a_1 > 0.  
$$
(Such ``relative contact open books'' were previously considered
in~\cite{VHM,BEV}).  
After a linear coordinate change on $T_i$ we may then assume that
$a=(0,n_i)$ for some $n_i\in\N$, so $\pi(r,\phi,\theta)=n_i\theta$ and
the positivity condition $d\theta\wedge d\alpha>0$ becomes
\begin{equation}\label{eq:h1}
   h_1'>0. 
\end{equation}
Note that the page $\Sigma$ of the open book has two kinds of boundary
components: those that get collapsed to binding components, and others
that give rise to the boundary tori $T_i$. 

Recall Giroux's result~\cite{Gir} that for any cooriented contact structure on a
closed oriented 3-manifold (without boundary) there exists a defining
contact form which is supported by an open book. The goal of this
section is to prove the following relative version. 

\begin{prop}\label{prop:relGiroux}
Let $(N,\alpha)$ be a compact oriented contact manifold with toric
boundary $\p N=T_1\amalg\dots\amalg T_\ell$ as above, i.e.~there exist
collar neighbourhood $K_i\cong [l_i,R_i]\times T^2$ 
of $T_i=\{r=l_i\}\cong T^2$ on which $\alpha$ is given
by~\eqref{eq:alpha_h}. Assume that on each $K_i$ we have 
$d\theta\wedge d\alpha>0$ (i.e.~$h_1'>0$). 

Then there exists a natural number $K$ such that for any collection
$\{n_i\}_{i=1,\dots,\ell}$ of natural numbers with $n_i\ge K$ 
there exist a homotopy of contact form $\alpha_t$, $t\in[0,1]$, and an
open book decomposition $(B,\pi)$ of $N$ with the following properties:
\begin{enumerate}
\item The contact form $\alpha_1$ is supported by the open book
  decomposition $(B,\pi)$. Moreover,
  $d\alpha_1=d\alpha$ near $\p N$. 
\item Near $T_i$ the fibration $\pi$ is given by
  $\pi(\theta,r,\phi)=n_i\theta$.
\item $\alpha_0=\alpha$, and $\alpha_t$
  is $T^2$-invariant near $\p N$ for all $t$. 
\item Writing $\alpha=h_1(r)d\phi+h_2(r)d\theta$ and
  $\alpha^t=h_1^t(r)d\phi+h_2^t(r)d\theta$ near $T_i$, we have
  $h_1^1(l_i)>0$, and if $h_1(l_i)>0$ then $h_1^1(l_i)<h_1(l_i)$.  
Moreover, if $h_1(l_i)\le 0$, then $(h^t)'(l_i)$ makes one full {\em
  negative} (i.e.~clockwise) turn in the $(h_2,h_1)$ plane as $t$ runs
from $0$ to $1$. If $h_1(l_i)>0$, then we have a choice: we can choose
$(h^t)'(l_i)$ to have rotation number $-1$ or $0$. 
\item If on some $T_i$ we have $h(r)=((r-l_i),1)$, then instead of
  (iv) the $T^2$-invariant homotopy $\alpha_t$ near $T_i$ can be
  chosen to be constant. 
\end{enumerate}
\end{prop}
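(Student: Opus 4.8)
The plan is to prove Proposition~\ref{prop:relGiroux} by reducing it to the closed case via a collapsing construction, using Corollary~\ref{cor:braiding} on the resulting closed contact manifold, and then undoing the collapse. First I would collapse, for each boundary torus $T_i$, the circle direction $\partial_\phi$ that is transverse to $\partial_\theta$: on the collar $K_i\cong[l_i,R_i]\times T^2$ with coordinates $(r,\phi,\theta)$, shrink the $\phi$-circle of the boundary torus $T_i=\{r=l_i\}$ to a point, so that $(r,\phi)$ become polar-type coordinates on a disk and the torus $T_i$ becomes a circle $L_i=\{r=l_i\}$ parametrized by $\theta$. Because $\alpha=h_1(r)d\phi+h_2(r)d\theta$ is $T^2$-invariant and $h_1'>0$, one checks that $\alpha$ descends to a smooth contact form $\bar\alpha$ on the closed manifold $\bar N$ obtained by capping off; the image circle $L=\bigcup_iL_i$ is positively transverse to $\ker\bar\alpha$ precisely because of the contact/positivity conditions, and $\pi(r,\phi,\theta)=n_i\theta$ is exactly $\pi=n_i$ times the $\theta$-covering of $L_i$. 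This is the content of the ``collapsing'' described in the sketch of proof in the introduction, and the construction must be arranged so that near $L_i$ the capped form $\bar\alpha$ is of the standard shape $f(d\theta+r^2d\phi)$ up to a coordinate change (this is where the hypothesis $h_1'>0$ enters decisively, letting us normalize $h$).

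Next I would apply Corollary~\ref{cor:braiding} to $(\bar N,\bar\alpha)$ and the transverse link $L$: for all $n_i\ge K$ (this is where the constant $K$ in the proposition comes from) we obtain a contact form $\beta$ with $\ker\beta=\ker\bar\alpha$ and an open book $(\bar B,\bar\pi)$ supporting $\beta$, such that $L$ is disjoint from $\bar B$, positively transverse to the pages with intersection number $n_i$ of $L_i$ with a page, and in coordinates $(\theta,r,\phi)$ near $L_i=S^1\times\{0\}$ we have $\beta=\bar\alpha=f(d\theta+r^2d\phi)$ and $\bar\pi=n_i\theta$. The Gray-stability homotopy relating $\bar\alpha$ to $\beta$ produces a homotopy of contact forms on $\bar N$; I would arrange (using that $\beta=\bar\alpha$ identically near $L$) that this homotopy is supported away from a neighborhood of $L$, hence descends to (the collapse of) a $T^2$-invariant-near-$\partial N$ homotopy once we cut back.

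Then I would undo the collapse: remove from $\bar N$ the interiors of small disk-fiber neighborhoods $\{r\le l_i\}$ of the $L_i$ and re-expand each collapsed circle $L_i$ back to a torus $T_i$. Because near $L_i$ everything is standard — $\beta=f(d\theta+r^2d\phi)$ and $\bar\pi=n_i\theta$ — re-inserting the collar $[l_i,R_i]\times T^2$ with the $T^2$-invariant form $h_1(r)d\phi+h_2(r)d\theta$ glues smoothly to $\beta$ provided we choose $h$ near $r=l_i$ to match the 1-jet of the standard model; in particular $d\alpha_1=d\alpha$ near $\partial N$ can be arranged since the 2-form is determined by $(h_1',h_2')$, giving property (i), and $\pi(\theta,r,\phi)=n_i\theta$ near $T_i$ gives (ii). The contact form $\alpha_1$ so produced is supported by the resulting open book $(B,\pi)$ on $N$ (binding $\bar B$, which lies in $\inn N$). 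The homotopy $\alpha_t$ interpolating from $\alpha$ to $\alpha_1$ is built by concatenating: (a) a $T^2$-invariant homotopy of $h$ near each $T_i$ that brings $h$ into the normalized form needed for collapsing and records the prescribed rotation behavior of $(h^t)'(l_i)$ in the $(h_2,h_1)$-plane — this is where properties (iv) and (v) are designed in, by choosing whether $h'(l_i)$ makes a full clockwise turn (forced when $h_1(l_i)\le 0$, since we must pass the frame through $h_1>0$ and back, or not, when $h_1(l_i)>0$) — followed by (b) the descended Gray homotopy from step two, which is constant near $\partial N$.

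The main obstacle I expect is property (iv)/(v): controlling the winding of the derivative vector $(h^t)'(l_i)$ in the $(h_2,h_1)$-plane along the homotopy, and in particular the dichotomy ``$h_1(l_i)\le 0$ forces rotation number $-1$, while $h_1(l_i)>0$ allows a choice of $-1$ or $0$.'' This is a $1$-dimensional but genuinely subtle bookkeeping problem about homotopy classes of immersions of an interval with prescribed endpoint derivatives, constrained by $h_1'>0$ throughout (from~\eqref{eq:h1}) and by the requirement that the collapsed form be a genuine contact form near the capped circle; one must verify the claimed rotation number is both achievable and — in the $h_1(l_i)\le 0$ case — unavoidable given that the collapse demands $h_1>0$ near $r=l_i$ in the normalized picture. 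The rest of the argument is a careful but routine gluing/standardization bookkeeping, leaning on Lemmas~\ref{lm:cutoff}, \ref{lem:interpol2}, \ref{lem:knot-openbook-structure}, \ref{lem:knot-openbook-form} for the near-$L_i$ normalizations, exactly as in the proof of Corollary~\ref{cor:braiding}.
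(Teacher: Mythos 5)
Your proposal reproduces the paper's argument only in the special case covered by property (v), namely $h(r)=(r-l_i,\,1)$ near $T_i$. In that case $h_1(l_i)=0$, and after the coordinate change $r\mapsto r^{1/2}$ the form becomes $d\theta+r^2d\phi$, so the $\phi$-circle genuinely collapses to a smooth contact form; the paper does exactly what you describe there.

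The gap is in the general case. Your key claim --- that because $\alpha=h_1(r)d\phi+h_2(r)d\theta$ is $T^2$-invariant and $h_1'>0$, ``one checks that $\alpha$ descends to a smooth contact form $\bar\alpha$'' under the collapse of the $\phi$-circle at $r=l_i$ --- is false. For the form to extend smoothly over the collapsed circle, you need $h_1(l_i)=0$ (and appropriate higher-order behaviour, $h\sim(\rho^2,c)$ in the radial variable $\rho=r-l_i$). The hypothesis $h_1'>0$ only says $h_1$ is increasing; $h_1(l_i)$ can be negative, zero, or positive, and if it is nonzero the collapsed form has a singular $d\phi$ term. You acknowledge a ``normalization'' is needed, but $h_1'>0$ does not let you normalize $h(l_i)$ to the required form without a genuine homotopy of $h$ near $T_i$ --- and tracking that homotopy is precisely the content of (iv), which your sketch does not yet supply a mechanism for.

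What the paper actually does in the general case is structurally different. Rather than collapsing $T_i$, it \emph{glues in} a fresh solid torus $\{r\in[0,l_i]\}$ to each boundary component, choosing an extension of $h$ over $[0,l_i]$ that is $(r^2,1)$ near $r=0$ while keeping the contact condition and $h_1'>0$; the winding of $h'$ in this extension (Figures~\ref{figure:h1} and~\ref{figure:h2}) is exactly what later produces the rotation numbers in (iv). Corollary~\ref{cor:braiding} is then applied at the \emph{core circles} $L_i=\{r=0\}$ of the glued tori. The resulting form $\beta$ is standard only in a small neighbourhood $\{r\le w_i\}$ of $L_i$; on the collar $K_i=[l_i,R_i]\times T^2$ (which is interior to $\bar N$) it is not $T^2$-invariant and the binding $\bar B$ may even cross $K_i$. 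This is why the paper needs the auxiliary immersion $\tilde h$, the intermediate form $\beta'$, and above all the radial isotopy $\Phi_t$ that slides the good annulus $\{r\in[r_1,r_2]\}$ (near $L_i$, where everything is standard) out to $\{r\in[l_i,r_3]\}$ (a collar of $T_i$). Pulling back by $\Phi_1$ simultaneously transports the nice form and fibration to a neighbourhood of $T_i$ and records the required clockwise turn of $(h^t)'(l_i)$ through $h^t=\tilde h\circ\Phi_t$. None of this machinery appears in your sketch, and without it the ``undo the collapse'' step has nothing to glue to near $T_i$: the $\beta$ produced by the corollary is standard at $L_i$, not on a collar of $T_i$.
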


\begin{proof}
{\bf Special case. }
First we treat the special case that $h(r)=((r-l_i),1)$ near each
$T_i$. After a shift in the $r$ coordinate
by $l_i$ we may assume that $\alpha=d\theta+rd\phi$ and $r\in
[0,R_i]$. We remove the torus $\{0\}\times T^2$ and on the remaining
$(0,R_i]\times T^2$ we do a coordinate change $r_1:=r^{1/2}$ and then
rename $r_1$ back to $r$. Now the $1$-form $\alpha$ 
looks like $d\theta+r^2d\phi$. So it extends to the solid torus
obtained by collapsing the $\phi$-direction in the removed torus
$\{0\}\times T^2$. This gives rise to a closed contact manifold $(\bar
N,\bar\alpha)$ with a transverse link $K_1\amalg\dots\amalg K_\ell$
obtained from the $T_i$. We apply Corollary~\ref{cor:braiding} 
to obtain a contact form $\bar\beta$ on $\bar N$,
defining the same contact structure as $\bar\alpha$ and supported by an
open book $(\bar B,\bar\pi)$, such that $\bar\beta=\bar\alpha$ and
$\bar\pi=n_i\theta$ near $K_i$. Replacing $K_i$ back by $T_i$ and
changing coordinates back from $r^{1/2}$ to $r$ yields a contact
homotopy $\alpha_t$ (by linear interpolation from $\bar\alpha$ to
$\bar\beta$) and open book satisfying conditions (i-iii) and (v) in
the proposition.   
This was ``dream situation'' in which we did not have to worry how to
get back from $\bar N$ to $N$. Now we turn to the 
\smallskip
 
{\bf General case. }
Consider one neighbourhood $K_i=[l_i,R_i]\times T^2$. After a
shift we may assume that $l_i>0$. 
We consider the solid torus $S^1\times D^2$ with coordinates
$\theta\in S^1$ and polar coordinates $(r,\phi)$ on $D^2$, where $r\in
[0,l_i]$, so that we can write $S^1\times D^2=\{r\in [0,l_i]\}$.
We identify the boundary of this solid torus with $T_i$ via the
identity map. Gluing in these solid tori for all $i$ gives us a closed
manifold $\bar N$. Let $L_i$ denote the core circle $\{r=0\}$ of the
solid torus $\{r\in [0,l_i]\}$. We consider the union of this solid torus
with the collar neighbourhood of the respective boundary component
$$
   V_i:=K_i\cup \{r\in [0,l_i]\}=\{r\in [0,R_i]\}.
$$ 
We extend the contact form $\alpha$ from $N$ to $\bar N$ as
follows. Recall that on $K_i$ we have $\alpha=\alpha_h$ for an
immersion $h:[l_i,R_i]\to\C$ satisfying~\eqref{eq:h}
and~\eqref{eq:h1}. We extend $h$ from $[l_i,R_i]$
to $[0,R_i]$ so that the contact condition~\eqref{eq:h}
holds and $h(r)=(r^2,1)$ near $r=0$. Moreover, if $h_1(l_i)>0$ we
arrange that $h_1'>0$ on $(0,l_i]$ (see Figure \ref{figure:h1}), and if
$h_1(l_i)\leq 0$ we let $h$ rotate as in Figure \ref{figure:h2}.

\begin{figure}
\centering
\includegraphics[height=8cm]{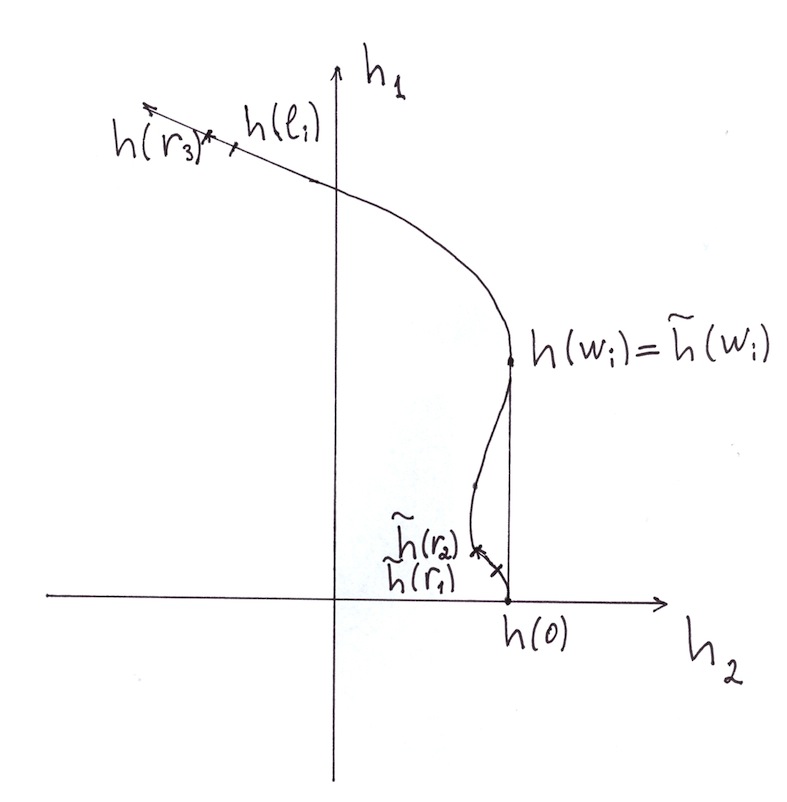}
\caption{The case $h_1(l_i)>0$}
\label{figure:h1}
\end{figure}

\begin{figure}
\centering
\includegraphics{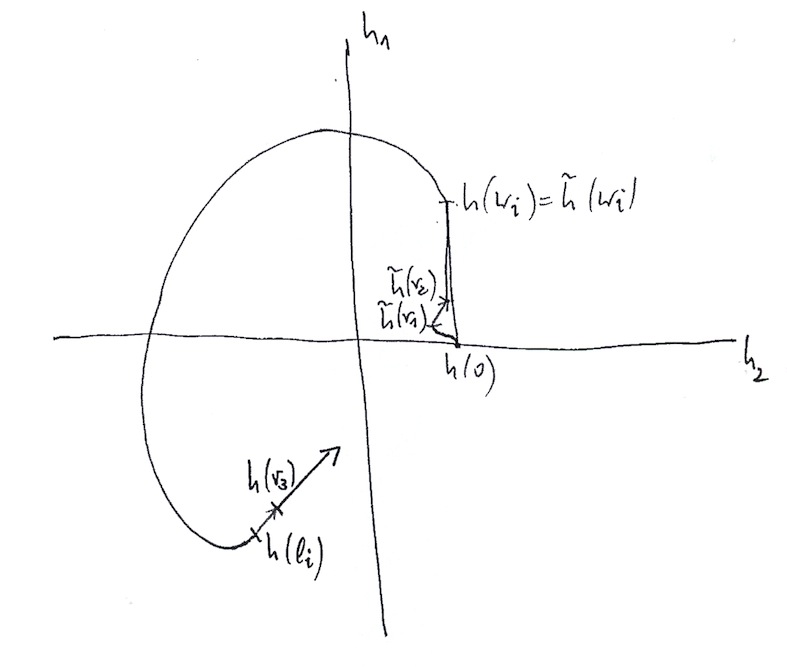}
\caption{The case $h_1(l_i)\leq 0$}
\label{figure:h2}
\end{figure}

The condition $h(r)=(r^2,1)$ near $r=0$ means that near $L_i$ we have 
$\alpha=\alpha_\st$ with 
$$
  \alpha_\st:=d\theta+r^2d\phi,
$$ 
which extends smoothly over $r=0$. 
This gives us the desired extension of $\alpha_h$ from $K_i$ to $V_i$
and thus the extension of $\alpha$ from $N$ to $\bar N$. To get back
to $N$ we just have to cut out the solid tori $\{r<l_i\}$.  

Let $\xi:=\ker\alpha$ denote the contact structure defined by $\alpha$
on $\bar N$. An application of Corollary~\ref{cor:braiding} to $(\bar
N,L:=\cup L_i,n_i,\alpha)$ gives us a new defining form $\beta$ for 
$\xi$ and an open book $(\bar B,\bar\pi)$ with the following properties. The
contact form $\beta$ is supported by the open book 
$(\bar B,\bar\pi)$, and both the open book projection 
and the contact form restrict to a neighbourhood 
$$
   W_i:=\{r\le w_i\}
$$ 
of $L_i$ in a standard way: $\bar\pi(r,\phi,\theta)=n_i\theta$ and
$\beta=\alpha=\alpha_\st$. 

Unfortunately, neither $\beta$ nor $\bar\pi$ is nice on $K_i$: It may
even happen that the binding $\bar B$ intersects $K_i$, and the form
$\beta$ need not be
$T^2$-invariant on $K_i$. In order to take care of this,
we introduce a new contact structure and new contact forms. We begin
by picking a subdivision 

\begin{figure}
\centering
\includegraphics{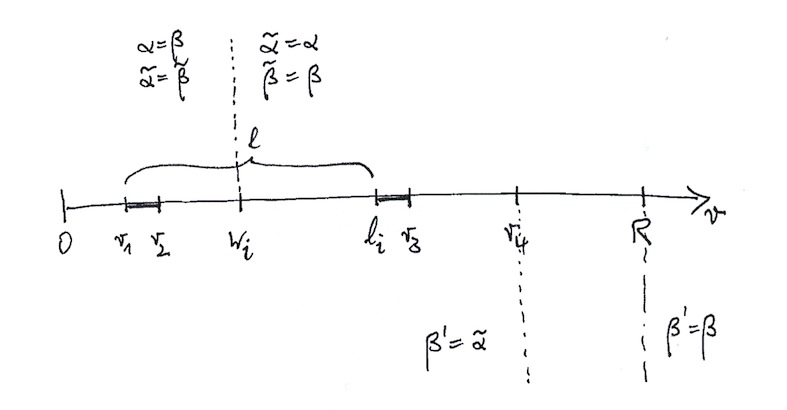}
\caption{The subdivision}
\label{figure:subdiv}
\end{figure}

\begin{equation}\label{eq:subdiv} 
   0<r_1<r_2<w_i<l_i<r_3<R_i.
\end{equation}
and an immersion $\tilde h:[0,w_i]\to\C$ with the following properties
(see Figures \ref{figure:h1}, \ref{figure:h2} and \ref{figure:subdiv}):
\begin{itemize}
\item $r_2-r_1=r_3-l_i$, or equivalently, $r_3-r_2=l_i-r_1=:l$; 
\item near $0$ and $w_i$ we have $\tilde h(r)=(r^2,1)$;
\item on $[r_1,r_2]$ we have $\tilde h(r)=h(r+l)+(A,B)$ for some
  constant $(A,B)\in \C$; 
\item the function $\tilde h$ satisfies conditions~\eqref{eq:h}
  and~\eqref{eq:h1}.
\end{itemize}
Of course $r_1,r_2,r_3,\tilde h$ depend on $i$, but we suppress this
from the notation. We extend $\tilde h$ as $h$ over the interval
$[w_i,R]$. This defines a contact form $\alpha_{\tilde h}$ on each $V_i$
and thus a contact form $\tilde\alpha$ on $\tilde N$ which differs
from $\alpha$ only on the neighbourhoods $W_i$. Recall that $\alpha$
and $\beta$ coincide on $W_i$, so we can define
a contact form $\tilde\beta$ on $\bar N$ as $\tilde\alpha$ on each
$W_i$ and as $\beta$ on the rest of $\bar N$. It is crucial to note
that the $\tilde h_1'>0$ implies that the contact form $\tilde\beta$
is supported by $(\bar B,\bar\pi)$.
Note also that the contact forms $\tilde\alpha$ and $\tilde\beta$
define the same contact structure that we denote by $\tilde\xi$. 

We introduce a third contact
form defining $\tilde\xi$. For this, we introduce another subdivision
point into subdivision~\eqref{eq:subdiv}, namely $r_4$: 
$$
   0<r_1<r_2<w_i<l_i<r_3<r_4<R.
$$
Now we pick a contact form $\beta'$ defining $\tilde\xi$ 
that coincides with $\tilde\alpha$ on $\{r\in [0,r_4]\}$ 
and with $\beta=\tilde\beta$ on a neighbourhood of $\bar N\setminus
(\cup V_i)$. 

We need one more ingredient: an isotopy of diffeomorphisms
$\{\Phi_t\}_{t\in [0,1]}$ of the interval $[0,R]$ with the following
properties (see Figure \ref{figure:Phi}):
\begin{itemize}
\item $\Phi_0=\id$;
\item $\Phi_t=\id$ near $0$ and $r_4$ for all $t\in [0,1]$; 
\item for $r\in [l_i,r_3]$ we have $\Phi_1(r)=r-l$.
\end{itemize}
\begin{figure}
\centering
\includegraphics{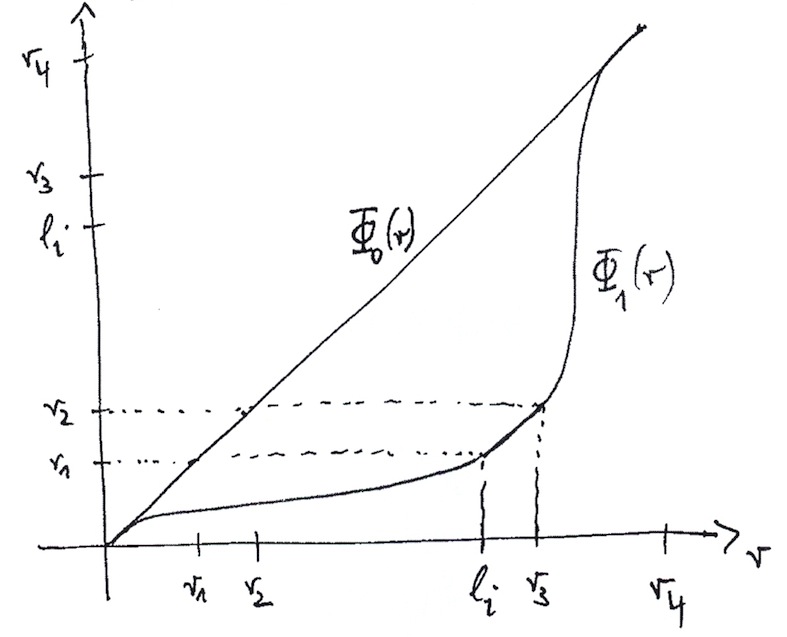}
\caption{The diffeomorphism $\Phi$}
\label{figure:Phi}
\end{figure}
For each $\Phi_t$ we also denote the induced radial diffeomorphism of
$V_i$ by $\Phi_t$. 
This diffeomorphism extends from $\cup V_i$ to the rest of $\bar N$ by
setting it to be the identity on the complement of $\cup V_i$. The
resulting diffeomorphism will be denoted as $\Phi_t$ again. This
finishes the necessary constructions and we now describe a homotopy of
contact forms on $\bar N$ in 3 steps. The desired homotopy $\alpha_t$
will be obtained by restricting this homotopy to $N$. 

\smallskip
 
{\bf Step 1. } 
We start with the contact form $\tilde\alpha$ and linearly homotop it
to $\beta'$ 
(they define the same contact structure $\tilde\xi$). Since $\beta'$
and $\tilde\alpha$ coincide on $\{r\le r_4\}$, the homotopy is fixed
there, in particular on $\{r\in [l_i,r_3]\}$. 

{\bf Step 2. } 
We deform $\beta'$ by setting $\beta_t':=\Phi_t^*\beta'$. On $\{r\in
[0,r_4]\}$ we have
$\beta_t'=\Phi_t^*\tilde\alpha=\Phi_t^*\alpha_{\tilde
  h}=\alpha_{\tilde h\circ\Phi_t}$. Since $\Phi_t=\id$ near $0$ and
$r_4$, the last equality is justified and we see that $\beta_t'$ is
$T^2$-invariant on $\{r\in [0,r_4]\}$ (so in particular on 
$\{r\in [l_i,r_3]\}$) for all $t$. 

{\bf Step 3. } 
Finally, we deform $\Phi_1^*\beta'$ by deforming $\beta'$ to
$\tilde\beta$. Namely, we consider the linear homotopy
$(1-t)\beta'+t\tilde\beta$ between the two forms $\beta',\tilde\beta$
defining the contact structure $\tilde\xi$ and set  
$$
   \beta_t:=\Phi_1^*\bigl((1-t)\beta'+t\tilde\beta\bigr).
$$ 
Note that on $\{r\in [0,w_i]\}$ we have 
$$
   (1-t)\beta'+t\tilde\beta=\alpha_{\tilde h}
$$ and thus on $\{r\in [l_i,r_3]\}$ we see that 
$$
   \beta_t = \Phi_1^*\alpha_{\tilde h} = \alpha_{\tilde h\circ\Phi_1}
   = \alpha_{\tilde  h(r-l)} = \alpha_{h(r)+(A,B)} =  
   \alpha_h+A\,d\phi+B\,d\theta = \alpha +A\,d\phi+B\,d\theta
$$ 
is fixed. 

We define the homotopy $\alpha_t$ by restricting the homotopy
constructed in Steps 1-3 above to $N$, thus $\alpha_0=\alpha$ and
$\alpha_1=\Phi_1^*\tilde\beta|_N$. We define the 
open book decomposition
$(B,\pi):=(\Phi_1^{-1}(\bar B)|_N,\bar\pi\circ\Phi_1)$ on $N$. Now
we check conditions (i-v) in the proposition with ``$\{r\in
[l_i,r_3]\}$'' in place of ``near $\p N$''. 
\smallskip

(i) The contact form $\Phi_1^*\tilde\beta$ is supported by the open book
$(\Phi_1^{-1}(\bar B),\bar\pi\circ\Phi_1)$ because $\tilde\beta$ is
supported by $(\bar B,\bar\pi)$. The last computation in Step 3 above
shows that on $\{r\in[l_i,r_3]\}$ we have
$d\alpha_1=d\alpha_h=d\alpha$. 
 
For (ii) note that $\bar\pi(\theta,r,\phi)=n_i\theta$ on $\{r\in
[r_1,r_2]\}$ implies that
$\bar\pi\circ\Phi_1(\theta,r,\phi)=n_i\theta$ on $\{r\in [l_i,r_3]\}$.

(iii) In the construction in Steps 1-3 above we always
checked the $T^2$-invariance of the contact forms on $\{r\in
[l_i,r_3]\}$. 

(iv) Recall that the homotopies in Steps 1 and 3 are constant near
$T_i=\{r=l_i\}$, so the homotopy $\alpha_t$ is given near $T_i$ by the
homotopy $\beta_t'=\alpha_{h^t}$ in Step 2, where
$h^t:=\tilde h\circ\Phi_t$. Thus for $t=1$ the first component
satisfies $h^1_1(l_i)=\tilde h_1(r_1)>0$, and if $h_1(l_i)>0$ then 
$h^1_1(l_i)=\tilde h_1(r_1)<\tilde h_1(l_i)=h_1(l_i)$ because we chose
$\tilde h_1$ strictly increasing on $(0,l_i]$ in this
case. Since $\Phi_t(l_i)$ decreases from $l_i$ to $r_1$ as
$t$ increases from $0$ to $1$, Figures \ref{figure:h1} and \ref{figure:h2}
show that the derivatives $(h^t)'(l_i) = \Phi_t'(l_i)\tilde
h'\bigl(\Phi_t(l_i)\bigr)$ have rotation number $0$ if $h_1(l_i)>0$
and $-1$ if $h_1(l_i)<0$. Moreover, by adding positive rotations in
the construction of $h$, we can decrease the rotation numbers by
arbitrary integers, in particular we can make it $-1$ in the case
$h_1(l_i)>0$. 

(v) can be arranged as in the special case discussed at the beginning
of this proof.  
\end{proof}

\section{The structure theorem}\label{sec:structure}

In this section we collect and refine some results from~\cite{CV} that
will be needed in the proof in Section~\ref{sec:proof}. Most
importantly, we will use the following structure theorem. 

\begin{theorem}\label{thm:structure}
Let $(\om,\lambda)$ be a SHS on a closed $3$-manifold $M$ and set
$f:=d\lambda/\om$. 
Then there exists a (possibly disconnected and possibly with boundary)
compact $3$-dimensional submanifold $N$ of $M$, invariant under the Reeb flow,
a (possibly empty) disjoint union
$U=U_1\cup\dots\cup U_k$ of compact integrable regions
and a stabilizing 1-form $\tilde\lambda$ for $\om$ with the following
properties: 
\begin{itemize}
\item $\inn U\cup \inn N=M$;
\item the proportionality coefficient 
$\tilde f:=d\tilde\lambda/\om$ is constant on each connected component of $N$; 
\item on each $U_i\cong [0,1]\times T^2$ the SHS $(\om,\lambda)$ is
  $T^2$-invariant and $f(r,z)=\alpha_ir+\beta_i$ for constants
  $\alpha_i>0$, $\beta_i\in\R$;
\item $\tilde\lambda$ is $C^1$-close to $\lambda$. 
\end{itemize}
Moreover, if $\om$ is exact we can arrange that $\tilde f$ attains
only nonzero values on $N$. 
\end{theorem}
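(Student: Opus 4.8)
The plan is to start from the structure theorem of \cite{CV} (recalled informally in Section~\ref{sec:intro}) and to make two elementary adjustments. Invoking \cite{CV}: after a $C^1$-small modification of $\lambda$, which I rename $\tilde\lambda$, the manifold decomposes as $M=\inn N\cup\inn U$ where $N=\bigcup_iN_i$ is compact and Reeb-invariant with $d\tilde\lambda=c_i\om$ on $N_i$ for constants $c_i\in\R$, and $U=\bigcup_jU_j$ is a finite union of compact $T^2$-invariant regions $U_j\cong[0,1]\times T^2$. This already gives the first, second and fourth bullet, so it only remains to put $f$ into affine form on the $U_j$ and, when $[\om]=0$, to arrange $c_i\neq0$ for all $i$.

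For the affine normal form, note that on a $T^2$-invariant region $f=d\tilde\lambda/\om$ is a function of the transverse coordinate alone. After a preliminary $C^1$-small $T^2$-equivariant perturbation I may assume this function has only nondegenerate critical points; subdividing each integrable region at these points and, near each local extremum, flattening $f$ to the constant equal to its extremal value --- a $C^1$-small change of $\tilde\lambda$ carried out as in \cite{CV}, which absorbs a thin slab into $N$ as a contact region with that constant --- I am left with integrable regions on which $f$ is strictly monotone. Orienting the transverse coordinate $r$ so that $f$ increases and then reparametrising $r$ by the affine function $r\mapsto\bigl(f(r)-f(0)\bigr)/\bigl(f(1)-f(0)\bigr)$, a $T^2$-equivariant diffeomorphism, puts $f$ into the form $\alpha_ir+\beta_i$ with $\alpha_i>0$. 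Since this last step changes only the coordinate description of $\tilde\lambda$, and not the form itself, the $C^1$-closeness to $\lambda$, the $T^2$-invariance, the constants $c_i$ on $N$ and the Reeb-invariance of $N$ are all retained.

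Finally I would treat the exact case $[\om]=0$. A component $N_i$ \emph{without boundary} cannot be flat: if $d\tilde\lambda|_{N_i}=0$ then $\tilde\lambda|_{N_i}$ is closed, so the positive number $\int_{N_i}\tilde\lambda\wedge\om$ depends only on $[\tilde\lambda|_{N_i}]\in H^1(N_i;\R)$ and $[\om|_{N_i}]\in H^2(N_i;\R)$ and in particular forces $[\om|_{N_i}]\neq0$; but $[\om|_{N_i}]$ is the image of $[\om]=0$ under the restriction map, a contradiction. To remove the remaining flat components (which may have boundary) all at once, write $\om=d\beta$ with $\beta$ chosen $T^2$-invariant on $U$ (average any primitive of $\om$ over the torus action on each $U_j$ and patch) and replace $\tilde\lambda$ by $\tilde\lambda+s\beta$ for a small $s\neq0$ with $|s|<\min\{\,|c_i| : c_i\neq0\,\}$. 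This alters neither $\om$ nor $[\om]$; it multiplies the Reeb field by a positive function, hence preserves the Reeb flow lines and the entire decomposition; it keeps the form stabilising and $C^1$-close to $\lambda$ for $s$ small; it preserves the $T^2$-invariance and the affine form of $f$ on $U$ (only each $\beta_i$ is shifted, by $s$); and on each $N_i$ the new proportionality constant equals $c_i+s\neq0$. This proves the addendum; note that if moreover $f\geq0$ (the confoliation situation of Example~\ref{ex:confol}) then all $c_i\geq0$, and taking $s>0$ gives $c_i>0$ for all $i$.

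The step I expect to be the main obstacle --- apart from quoting \cite{CV} correctly --- is the flattening and reparametrisation of the integrable pieces: one has to arrange that the modified $\tilde\lambda$ glues smoothly across the new interfaces with the contact regions and stays genuinely $T^2$-invariant on honest product collars, all while keeping the modification $C^1$-small. This is precisely the delicate part of the construction of \cite{CV}, which I would cite rather than reproduce.
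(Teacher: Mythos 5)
The paper does not prove Theorem~\ref{thm:structure} at all: Section~\ref{sec:structure} states explicitly that the theorem is imported from~\cite{CV}, so there is no in-paper argument to compare your proof against, and any honest proof here would have to reproduce the relevant work of~\cite{CV}. Your proposal cites~\cite{CV} both for the basic decomposition and for the flattening device (Proposition~\ref{prop:thick}), and by your own concluding remark defers the ``delicate part'' to~\cite{CV}; so it is less a proof than a reduction to that reference.

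Within the elementary post-processing you do offer there is a concrete gap. After smoothly flattening $f$ to a constant on slabs around its critical values, the derivative $\p_r f$ necessarily vanishes at the endpoints of each remaining monotone interval (smoothness of the flattened $f$ forces this). Your reparametrisation $r\mapsto\bigl(f(r)-f(0)\bigr)/\bigl(f(1)-f(0)\bigr)$ is then a homeomorphism of $[0,1]$ but \emph{not} a diffeomorphism: its inverse has infinite derivative at the endpoints, whereas the target normal form $f=\alpha_i r+\beta_i$ with $\alpha_i>0$ requires $\p_r f>0$ on the whole closed interval including its boundary. Repairing this --- shrinking $U_i$ into the interior of the monotone piece while the Reeb-invariant $N$ absorbs and further flattens the transition zone, keeping $\tilde f$ constant there and $\tilde\lambda$ $C^1$-close --- is precisely the part of the~\cite{CV} construction you proposed to cite rather than carry out, so the post-processing is not as elementary as presented. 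A second, smaller issue is a conflation of $\lambda$ and $\tilde\lambda$: because $\inn U\cup\inn N=M$ forces the interiors of $U_i$ and $N$ to overlap, the coefficient on that overlap cannot simultaneously be constant (second bullet) and affine with positive slope (third bullet); the third bullet therefore concerns the original (or a preliminarily modified) $\lambda$, while $\tilde\lambda$ is a genuinely different form, but your write-up calls both $\tilde\lambda$. On the positive side, your $\tilde\lambda\mapsto\tilde\lambda+s\beta$ argument for the exactness addendum --- with $\beta$ a global primitive of $\om$, $T^2$-averaged on $U$, and $0<|s|<\min\{|c_i|:c_i\neq 0\}$ --- is a clean and correct observation that does not appear in this paper.
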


We will also use the following result from~\cite{CV}. 

\begin{prop}\label{prop:thick}
Let $(\om,\lambda)$ be a SHS on a closed $3$-manifold $M$ and set
$f:=d\lambda/\om$. Let $Z\subset\R$ be any set of Lebesgue measure
zero containing a value $a\in Z\cap\im(f)$. 
Then there exists a stabilizing form $\tilde\lambda$ for $\om$ such
that $\tilde f:=d\tilde\lambda/\om$ can be written as 
$\tilde f=\sigma\circ f$ for a function $\sigma:\R\to\R$ 
which is locally constant on a open
neighbourhood of $Z$ (and thus $\tilde f$ is locally constant on an
open neighbourhood of  $f^{-1}(Z)$) and $\tilde f\equiv a$ on
$f^{-1}(a)$.  
Moreover, for every $s\in[0,1)$ we can achieve that $\tilde\lambda$ is
$C^{1+s}$-close to $\lambda$. 
\end{prop}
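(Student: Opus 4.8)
The plan is to obtain $\tilde\lambda$ as $\lambda$ plus a small \emph{exact} correction: I will choose a reparametrizing function $\sigma:\R\to\R$ with the required flatness, such that $\bigl(\sigma(f)-f\bigr)\om$ is exact, and take $\tilde\lambda:=\lambda+\mu$ for a small primitive $\mu$ of that $2$-form. The point that makes this work is a rigidity of $f$. Since $\lambda(R)=1$, $i_R\om=0$, and $\om$, $d\lambda$ are closed, one has $L_R\lambda=0$ and $L_R\om=0$, hence $L_Rf=0$, i.e.\ $df(R)=0$. Together with $i_R\om=0$ this gives $i_R(df\wedge\om)=0$, and a $3$-form on the $3$-manifold $M$ annihilated by the nowhere-zero field $R$ vanishes; thus $df\wedge\om=0$. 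Consequently, for \emph{every} $C^1$ function $\sigma$ the $2$-form $\bigl(\sigma(f)-f\bigr)\om$ is closed, its differential being $\bigl(\sigma'(f)-1\bigr)\,df\wedge\om+\bigl(\sigma(f)-f\bigr)\,d\om=0$.

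Next I would build $\sigma$. Using that $Z$ is Lebesgue-null it is elementary to produce a smooth "staircase" $\sigma_0$ which is locally constant on an open neighbourhood $U_Z\supset Z$ of small measure, identically $a$ on a neighbourhood of $a$ (legitimate since $a\in Z$ forces $\sigma_0$ constant near $a$, and we take that constant to be $a$), and as $C^0$-close to the identity as we wish. The remaining requirement — exactness of $\bigl(\sigma(f)-f\bigr)\om$ — is the heart of the matter and the step I expect to be the main obstacle. The class $\bigl[(\sigma(f)-f)\om\bigr]\in H^2(M;\R)$ is linear in $\sigma-\mathrm{id}$ and vanishes at $\sigma=\mathrm{id}$; for a $2$-cycle $S$ one has $\int_S(\sigma(f)-f)\om=\int_\R(\sigma-\mathrm{id})\,d\nu_S$ with $\nu_S:=f_*(\om|_S)$, and the first moment $\int_\R t\,d\nu_S=\langle[f\om],[S]\rangle=\langle[d\lambda],[S]\rangle=0$ is fixed; moreover $(\sigma_0(f)-f)\om$ vanishes off $f^{-1}(U_Z)$ and is $O(\|\sigma_0-\mathrm{id}\|_{C^0})$ there, so its class is as small as we like. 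One then checks that admissible corrections $\psi$ (smooth, locally constant near $Z$, vanishing near $a$, $C^0$-small) — which are $C^0$-dense among continuous functions vanishing near $a$ — realize, via $\psi\mapsto[\psi(f)\om]$, a full neighbourhood of $0$ in the relevant subspace of $H^2(M;\R)$, so that $\sigma:=\sigma_0+\psi$ can be arranged to make $\bigl(\sigma(f)-f\bigr)\om$ exactly exact while retaining all the flatness properties.

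With $\sigma$ fixed, set $g:=\sigma(f)-f=(\sigma-\mathrm{id})\circ f$. Although $\sigma-\mathrm{id}$ is \emph{not} $C^1$-small (its derivative jumps between $-1$ off the flat locus and $0$ on it), it is $C^0$-small and globally Lipschitz with an a priori bounded constant; interpolating these two bounds shows $\|g\|_{C^{0,s}}\to0$ as the steps of $\sigma$ shrink, for every $s\in[0,1)$ — and this interpolation is exactly why $s=1$ must be excluded. Since $g\om$ is exact with small $C^{0,s}$-norm, Hodge theory (apply $d^*$ to the Green operator, using Schauder estimates to gain one derivative) produces a smooth $1$-form $\mu$ with $d\mu=g\om$ and $\|\mu\|_{C^{1,s}}$ correspondingly small.

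Finally I put $\tilde\lambda:=\lambda+\mu$. For $\mu$ small the form $\mu\wedge\om$ is $C^0$-small while $\lambda\wedge\om$ is a volume form bounded below on the compact $M$, so $\tilde\lambda\wedge\om=\lambda\wedge\om+\mu\wedge\om>0$; and $d\tilde\lambda=d\lambda+d\mu=f\om+g\om=(\sigma\circ f)\,\om$, so $\tilde\lambda$ is a stabilizing form for $\om$ with $\tilde f=\sigma\circ f$, which is locally constant on a neighbourhood of $Z$ and equals $a$ on $f^{-1}(a)$ because $\sigma\equiv a$ near $a$. Since $\tilde\lambda-\lambda=\mu$ is small in $C^{1,s}=C^{1+s}$, this $\tilde\lambda$ is $C^{1+s}$-close to $\lambda$, which completes the proof.
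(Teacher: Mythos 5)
The paper does not prove this proposition --- it quotes it from~\cite{CV} --- so there is no in-text proof to compare against. Judged on its own, your strategy is sound and the key observation is exactly right: $L_R f=0$ together with $i_R\om=0$ forces $df\wedge\om=0$, so $(\sigma(f)-f)\,\om$ is closed for every smooth $\sigma$. The $C^0$--Lipschitz interpolation giving $\|(\sigma-\mathrm{id})\circ f\|_{C^{0,s}}\to 0$ for $s<1$ (and explaining why $s=1$ is excluded), the Hodge/Schauder production of a $C^{1,s}$-small primitive $\mu$, and the verification that $\lambda+\mu$ stabilizes $\om$ with $\tilde f=\sigma\circ f$ are all correct.

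The one genuine gap is exactly where you expect trouble: exactness of $(\sigma(f)-f)\om$. You assert that admissible corrections $\psi$ realize ``a full neighbourhood of $0$ in the relevant subspace of $H^2(M;\R)$'', but you neither identify that subspace nor prove surjectivity, and this is the only nontrivial point of the proof. To close it, set $V:=\{[\rho(f)\om]:\rho\in C^\infty(\R),\ \rho(a)=0\}\subset H^2(M;\R)$; this is a linear subspace which contains the obstruction $[(\sigma_0(f)-f)\om]$, since $(\sigma_0-\mathrm{id})(a)=0$. The map $\rho\mapsto[\rho(f)\om]$ is $C^0$-continuous, and admissible $\psi$ (smooth, locally constant near $Z$ for some neighbourhood, vanishing near $a$) are $C^0$-dense among continuous $\rho$ with $\rho(a)=0$: replace $\rho$ by constants on small intervals covering $Z\cap\im(f)$ and smooth. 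For a fixed neighbourhood of $Z$ the resulting classes form a subspace of $V$; shrinking the neighbourhood gives an increasing family whose union is dense in the finite-dimensional $V$ and therefore stabilizes at $V$ itself. A linear right inverse on a finite-dimensional complement then yields $\psi$ with the prescribed class and $\|\psi\|_{C^1}$ comparable to the (small) norm of that class; this also supplies the Lipschitz bound on $\psi$ that your interpolation step needs but tacitly assumes. Finally, note that your construction of a smooth staircase $\sigma_0$ (and the surjectivity argument just sketched) implicitly requires $Z\cap\im(f)$ not to be dense in $\im(f)$: a $C^1$ function that is locally constant on a dense open subset has identically vanishing derivative and is constant. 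In the paper's applications $Z$ is closed (the critical values of $f$ together with $\{0\}$, or a single point), so this is harmless, but it is worth flagging as an implicit hypothesis.
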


Using this, we now prove the following refinement of
Theorem~\ref{thm:structure}. 

\begin{corollary}\label{cor:structure2}
Every stable Hamiltonian structure on a closed 3-manifold $M$ is 
stably homotopic to a SHS $(\om,\lambda)$ for which 
there exists a (possibly disconnected and possibly with boundary)
compact $3$-dimensional submanifold $N=N^+\cup N^-\cup N^0$ of $M$,
invariant under the Reeb flow, and a (possibly empty) disjoint union
$U=U_1\cup\dots\cup U_k$ of compact integrable regions with the
following properties:  
\begin{itemize}
\item $\inn U\cup \inn N=M$;
\item the proportionality coefficient $f:=d\lambda/\om$ is constant
  positive resp.~negative on each connected component of $N^+$
  resp.~$N^-$;  
\item on each $U_i\cong [0,1]\times T^2$ the SHS $(\om,\lambda)$ is
  $T^2$-invariant and $f$ is nowhere zero;
\item on $N^0$ there exists a closed $1$-form $\bar\lambda$
  representing a primitive integer cohomology class $\bar\lambda\in
  H^1(N^0;\Z)$ such that $\bar\lambda\wedge\om>0$ and $\bar\lambda$ is
  $T^2$-invariant near $\p N^0$.
\end{itemize}
\end{corollary}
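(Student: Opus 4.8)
\textbf{Proof plan for Corollary~\ref{cor:structure2}.}
The plan is to start from the output of Theorem~\ref{thm:structure}, which already gives us a SHS $(\om,\lambda)$ (stably homotopic to the original one), a Reeb-invariant submanifold $N$ with boundary, integrable regions $U_i\cong[0,1]\times T^2$ on which $f$ is affine, hence (after shrinking) nowhere zero, and a proportionality coefficient $\tilde f=d\tilde\lambda/\om$ that is \emph{constant} on each component of $N$. I would rename $\tilde\lambda$ back to $\lambda$ and $\tilde f$ back to $f$. Then $N$ splits according to the sign of that constant: let $N^+$ resp.~$N^-$ be the union of components where $f>0$ resp.~$f<0$, and $N^0$ the union of components where $f\equiv 0$. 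The first three bullets are then immediate from Theorem~\ref{thm:structure}: $\inn U\cup\inn N=M$, $f$ is constant with the correct sign on $N^\pm$, and on each $U_i$ the SHS is $T^2$-invariant with $f$ nowhere zero. So the only real work is to produce, on $N^0$, a \emph{closed integral} $1$-form $\bar\lambda$ with $\bar\lambda\wedge\om>0$ that is $T^2$-invariant near $\p N^0$.

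On a component $N^0_j$ of $N^0$ we have $d\lambda=f\om=0$, so $\lambda$ itself is already a closed $1$-form with $\lambda\wedge\om>0$, and (by the collar structure coming from Theorem~\ref{thm:structure}, where $N$ meets the integrable regions $U_i$ along tori) $\lambda$ is $T^2$-invariant near $\p N^0_j$. The issue is that $[\lambda]\in H^1(N^0_j;\R)$ need not be rational. The fix is the standard open-ness/convexity argument: the condition $\mu\wedge\om>0$ is an open condition on the closed $1$-form $\mu$ (uniformly, by compactness of $N^0_j$), so it holds for all closed $\mu$ whose cohomology class lies in a small ball around $[\lambda]$; since rational (indeed primitive integral, after scaling by a large integer) classes are dense in $H^1(N^0_j;\R)$, we may pick such a class $\mathfrak c$ arbitrarily close to $[\lambda]$ and a closed representative $\bar\lambda$ of it with $\bar\lambda\wedge\om>0$. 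To keep $\bar\lambda$ $T^2$-invariant near $\p N^0_j$: on the collar $[0,1]\times T^2$ the form $\lambda$ is $T^2$-invariant, hence cohomologous in the collar to a constant-coefficient $1$-form on $T^2$; choose $\bar\lambda$ to agree with a nearby constant-coefficient (rational) $1$-form there, and interpolate in the interior. A convex-combination/cutoff argument as in Lemma~\ref{lem:interpol2} (or simply $\mu_s=(1-s)\lambda+s\bar\lambda$, all closed, all close to $\lambda$, hence all with $\mu_s\wedge\om>0$) patches the collar model to the global closed representative while preserving positivity.

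Two points need a little care. First, one must make sure the stable-homotopy bookkeeping is consistent: Theorem~\ref{thm:structure} already delivers $(\om,\lambda)$ as stably homotopic to the original SHS (with $[\om]$ fixed), and no further homotopy of $\om$ is performed here — we only \emph{replace $\lambda$ by a closed form $\bar\lambda$ on $N^0$ for the purposes of the fourth bullet}, which is a statement about the existence of $\bar\lambda$, not a modification of $\lambda$ itself, so no cohomology class moves. Second, one should choose the primitive integral class: starting from any rational class close to $[\lambda]$, divide by the gcd of its coefficients to make it primitive; this only rescales $\bar\lambda$ by a positive constant, preserving $\bar\lambda\wedge\om>0$. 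I expect the main (though still routine) obstacle to be the compatibility of the rational approximation with the $T^2$-invariance near $\p N^0$ — i.e.~arranging the global closed form and the collar normal form to match — which is handled by first fixing the (rational, constant-coefficient) collar model and then choosing the global representative to restrict to it, using that $H^1$ of the collar injects compatibly into $H^1(N^0_j)$.
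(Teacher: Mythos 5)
Your overall route coincides with the paper's — apply Theorem~\ref{thm:structure}, split $N$ by the sign of the constant, and on $N^0$ approximate the closed $1$-form by a rational one and straighten it out near the boundary — but there is one genuine gap. You dispose of the third bullet by saying that $f$ is affine on $U_i$ and "hence (after shrinking) nowhere zero." That is precisely where the difficulty lives. An affine $f(r)=\alpha_ir+\beta_i$ with $\alpha_i>0$ can vanish at an interior point $r_0\in(0,1)$, and you cannot shrink $U_i$ past the slice $\{r_0\}\times T^2$ without destroying the covering condition $\inn U\cup\inn N=M$. Worse, after the structure theorem the new proportionality coefficient $\tilde f$ vanishes identically on $N^0$, and since the adjacent $U_i$ overlap $N^0$, $\tilde f$ necessarily vanishes on part of those $U_i$; shrinking does not remove this. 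The paper handles this with a further step that your outline has no analogue of: after producing $\bar\lambda$ on $N^0$, it chooses $\delta>0$ with $\{|f|<\delta\}\subset N^0$, applies Theorem~\ref{thm:structure} a \emph{second} time to the original $(\om,\lambda)$ with $\|\tilde\lambda-\lambda\|_{C^1}$ so small that the new zero set $\{\tilde f=0\}$ lies inside $\{|f|<\delta\}\subset N^0$, and then assembles the decomposition from the \emph{old} $N^0$ and $\bar\lambda$ together with the intersections of the \emph{new} $\tilde N^\pm,\tilde U_i$ with a neighbourhood $W$ of $M\setminus\inn N^0$, on which $\tilde f$ is nowhere zero. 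Some device of this kind is needed to make the third bullet true; it is not automatic.

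A smaller point: on $N^0$ you propose to match the global rational closed form to a $T^2$-invariant collar model by "interpolating" or by a cutoff convex combination. A cutoff convex combination $(1-\rho(r))\mu_0+\rho(r)\mu_1$ of two closed $1$-forms is not closed — it picks up the term $\rho'(r)\,dr\wedge(\mu_1-\mu_0)$ — and Lemma~\ref{lem:interpol2} concerns contact forms, not closed ones, so it does not apply. The paper instead writes $\hat\lambda=\lambda_{\rm inv}+d\chi$ on the collar, with $\lambda_{\rm inv}$ the $T^2$-average and $\chi$ a $C^1$-small function, and sets $\bar\lambda=\lambda_{\rm inv}+d(\rho\chi)$; this is automatically closed, $T^2$-invariant near $\p N^0$, exact-cohomologous to $\hat\lambda$ (hence rational), and close enough to keep $\bar\lambda\wedge\om>0$. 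Your intuition is right, but the construction must take this specific form to remain in the class of closed forms.
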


\begin{proof}
Consider a SHS $(\om,\lambda)$ and apply the Structure
Theorem~\ref{thm:structure} to find a new stabilizing 1-form
$\tilde\lambda$. Denote by $N^+,N^0,N^-$ the union of
components of $N$ on which $\tilde f=d\tilde\lambda/\om$ is positive
(resp.~zero, negative). If the original proportionality coefficient
$f=d\lambda/\om$ is nowhere zero, then we may assume that the new
proportionality coefficient $\tilde f$ is nowhere zero and
$(\om,\tilde\lambda)$ has all the desired properties (with
$N^0=\emptyset$). So suppose that $f$ has nonempty zero set
$f^{-1}(0)$. The proof of Theorem~\ref{thm:structure} (using
Proposition~\ref{prop:thick} with $Z$ the set of critical values
together with the value $a=0$) allows us to arrange that $f^{-1}(0)$
is contained in the interior of the flat part $N^0$. 

The new stabilizing form 
$\tilde\lambda$ restricts as a closed form to $N^0$. We $C^1$-perturb 
$\tilde\lambda|_{N^0}$ to get a $1$-form $\hat\lambda$ on
$N^0$ representing a rational cohomology class. 
Let $V\cong [0,1]\times T^2\subset N^0$ be a part of an integrable
region sitting in $N^0$ as a collar neighbourhood of one of its
boundary components $\{1\}\times T^2$. Since the restriction 
$\tilde\lambda|_V$ is $T^2$-invariant and $\hat\lambda$ is $C^1$-close
to $\tilde\lambda$, the $T^2$ average $\lambda_{inv}$ of $\hat\lambda$
on $V$ is $C^1$-close to $\hat\lambda$. As $\lambda_{inv}$ and
$\hat\lambda$ represent the same cohomology class on $V$, we can write
$\hat\lambda=\lambda_{inv}+d\chi$ for a smooth function 
$\chi$ on $V$. Moreover, $C^1$-closeness of $\lambda_{inv}$ and
$\hat\lambda$ allows us to choose $\chi$ also $C^1$-small. Let $\rho$
be a cutoff function on $[0,1]$ which equals $1$ near $0$ and $0$ near
$1$. Set $\bar\lambda:=\lambda_{inv}+d(\rho\chi)$ on $V$ and extend this form
as $\hat\lambda$ inside $N^0$. 
The closed form $\bar\lambda$ is $T^2$-invariant near the boundary of
$N^0$. Note also that 
$$
   \bar\lambda-\hat\lambda = \lambda_{inv}+d(\rho\chi)-
   (\lambda_{inv}+d\chi) = d(\rho\chi)-d\chi = d(\chi(\rho-1))
$$
on $V$. Therefore, the difference $\bar\lambda-\hat\lambda$ is exact
on $N^0$ and thus $\bar\lambda$ represents a rational cohomology
class. Assume the procedure above has been performed near all boundary
components of $N^0$. Now $C^1$-smallness of $\chi$ implies
$C^1$-smallness of $\rho\chi$. This together with the computation
above and the $C^1$-smallness of the difference
$\hat\lambda-\tilde\lambda$ ensures that $\bar\lambda\wedge\om>0$ on
$N^0$. 
After multiplying $\bar\lambda$ with a rational
number we may assume that it represents a primitive integer cohomology
class in $H^1(N^0,\Z)$. 

The set $N^0$ and the $1$-form $\bar\lambda$ on it have the
desired properties. However, the new proportionality coefficient
$\tilde f$ may still vanish on some integrable region $U_i$. 
To remedy this, we choose $\delta>0$ so small that
$\{f<\delta\}\subset N^0$. Now we apply Theorem~\ref{thm:structure}
{\em again} to the original SHS $(\om,\lambda)$ to obtain new
$\tilde\lambda,\tilde f$ and new regions $\tilde N^\pm,\tilde
N^0,\tilde U_i$. Moreover, we can make 
$\|\tilde\lambda-\lambda\|_{C^1}$ so small that 
$\tilde N^0\subset\{\tilde f=0\}\subset\{f<\delta\}\subset N^0$.  
In particular, $\tilde f$ does not vanish on a neighbourhod $W$ of
$M\setminus \inn N^0$. Now the SHS $(\om,\tilde\lambda)$, the {\em
  old} set $N^0$ and 1-form $\bar\lambda$, and the intersections of
the {\em new} sets $\tilde U_i$ and $\tilde N^\pm$ with $W$ satisfy
all conditions in Corollary~\ref{cor:structure2}.  
\end{proof}

\begin{remark}\label{rem:positive}
If $f$ is non-negative, then so is the new proportionality coefficient
constructed in the proof of Corollary~\ref{cor:structure2} and hence
$N^-=\emptyset$. Then all binding components of the open book
constructed in the proof of Theorem~\ref{thm:Giroux} in the next
section occur in $N^+$ and thus have positive signs. 
\end{remark}

\section{Proof of the main theorem}\label{sec:proof}

In this section we prove Theorem~\ref{thm:Giroux}. 

We will need two more technical results. As before, we consider
$[a,b]\times T^2$ with cordinates $(r,\phi,\theta)$. 
For functions $h=(h_1,h_2):[a,b]\to \C$ and $g=(g_1,g_2):[a,b]\to \C$
we define 1-forms
\begin{equation*}
   \alpha_{h}=h_{1}(r)d\phi+h_{2}(r)d\theta,\qquad
   \lambda_{g}=g_{1}(r)d\phi+g_{2}(r)d\theta. 
\end{equation*}
Then $(d\alpha_h,\lambda_g)$ is a SHS iff
\begin{equation}\label{eq:g}
   \la g',ih'\ra=0,\qquad \la g,ih'\ra>0.
\end{equation}
The following result is proved in~\cite{CV}. 

\begin{proposition}\label{prop:stabhom}
Let $h_t:[0,1]\to \C$, $t\in [a,b]$ be a 
homotopy of immersions such that for each $t$ the slope $h_t'/|h_t'|$
restricted to $[\eps,1-\eps]$ is nonconstant. Let $\bar
g_t:[0,\eps]\cup [1-\eps,1]$, $t\in [a,b]$ be a homotopy such that
$(h_t,\bar g_t)$ satisfies~\eqref{eq:g} on $[0,\eps]\cup [1-\eps,1]$
for all $t\in [a,b]$. Then there exists a homotopy $g_t$ which agrees
with $\bar g_t$ to $[0,\eps]\cup [1-\eps,1]$ such that $(h_t,g_t)$
satisfies~\eqref{eq:g} on $[0,1]$ for all $t\in [a,b]$.
\end{proposition}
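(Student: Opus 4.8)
The plan is to pass to rotating coordinates in which the first equation of~\eqref{eq:g} becomes vacuous and the second becomes a sign condition, reducing the proposition to an elementary one-variable interpolation.

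\emph{Change of variables.} Since each $h_t$ is an immersion, $h_t'$ is nowhere zero, so I would pick a smooth family of smooth functions $\psi_t\colon[0,1]\to\R$ with $h_t'=|h_t'|e^{i\psi_t}$ (possible because $[a,b]\times[0,1]$ is contractible) and write a candidate partner form as $g_t=G_te^{i\psi_t}$, where $G_t=x_t+iy_t\colon[0,1]\to\C$. A one-line computation gives $g_t'=(G_t'+i\psi_t'G_t)e^{i\psi_t}$ and $\la g_t,ih_t'\ra=|h_t'|\,y_t$, so under $g_t\leftrightarrow G_t$ the conditions~\eqref{eq:g} turn into
$$
   y_t'=-\psi_t'\,x_t \qquad\text{and}\qquad y_t>0 .
$$
The first is equivalent to $G_t'+i\psi_t'G_t\in\R$; it puts no constraint on the free function $x_t$, and then the proportionality factor is recovered as $\mu_t=(x_t'-\psi_t'y_t)/|h_t'|$. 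Running the same computation over $[0,\eps]\cup[1-\eps,1]$ shows that the prescribed $\bar g_t$ corresponds there to functions $\bar x_t,\bar y_t$ with $\bar y_t'=-\psi_t'\bar x_t$ and $\bar y_t>0$; in particular $\bar y_t(\eps),\bar y_t(1-\eps)>0$. Finally, the hypothesis that $h_t'/|h_t'|$ is nonconstant on $[\eps,1-\eps]$ is exactly the statement $\psi_t'\not\equiv0$ there.

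\emph{The interpolation.} It now suffices to extend $x_t$ over $[\eps,1-\eps]$, agreeing with $\bar x_t$ near the two endpoints (so that the glued $g_t$ is smooth and coincides with $\bar g_t$ on $[0,\eps]\cup[1-\eps,1]$), so that $y_t(r):=\bar y_t(\eps)-\int_\eps^r\psi_t'x_t$ is positive on $[\eps,1-\eps]$ and equals $\bar y_t$ near $1-\eps$ — all smoothly in $t$. Because $x_t=\bar x_t$ near the endpoints forces $y_t'=\bar y_t'$ there, $y_t=\bar y_t$ near both endpoints as soon as the single scalar condition $\int_\eps^{1-\eps}\psi_t'x_t=\bar y_t(\eps)-\bar y_t(1-\eps)$ holds. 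Fix small buffers $[\eps,\eps_1]$ and $[\eps_2,1-\eps]$ (small enough that $\bar y_t>0$ on them, uniformly in $t\in[a,b]$) on which $x_t:=\bar x_t$; there $y_t=\bar y_t>0$ automatically, and a short computation shows that the remaining middle integral must equal $M_t:=\bar y_t(\eps_1)-\bar y_t(\eps_2)$. On $[\eps_1,\eps_2]$ I would put $x_t:=c_t\,\chi_t\,\psi_t'$ with $\chi_t\ge0$ a bump supported in the interior and $c_t:=M_t/\!\int\chi_t(\psi_t')^2$ (well defined since $\int\chi_t(\psi_t')^2>0$ because $\psi_t'\not\equiv0$). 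Then $\psi_t'x_t=c_t\chi_t(\psi_t')^2$ has constant sign, so $y_t$ is monotone across $[\eps_1,\eps_2]$, running between its positive boundary values $y_t(\eps_1)=\bar y_t(\eps_1)$ and $y_t(\eps_2)=\bar y_t(\eps_2)$; hence $y_t>0$ throughout $[\eps,1-\eps]$. Transporting back via $g_t=G_te^{i\psi_t}$ produces the required homotopy.

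\emph{Smooth dependence and the main point.} The only genuinely fiddly item is keeping all choices smooth in $t$: the lift $\psi_t$ (fine, as $h_t'\ne0$), the bump $\chi_t$ (built from finitely many fixed bumps by a partition of unity over the compact interval $[a,b]$, using continuity of $\psi_t'$ in $t$ and $\psi_t'\not\equiv0$ on $[\eps,1-\eps]$ to guarantee $\int\chi_t(\psi_t')^2>0$), and the cutoffs gluing $x_t$ across the buffers, which are of the type furnished by Lemma~\ref{lm:cutoff} and handled as in Lemma~\ref{lem:interpol2}. I expect the real obstacle to be one of recognition only: spotting the substitution $g_t\mapsto g_te^{-i\psi_t}$. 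Before it, the positivity condition $\la g_t,ih_t'\ra>0$ is a constraint against a half-plane whose normal rotates with $r$ while $g_t'$ is forced along that rotating line, and the available freedom is opaque; after it the problem is the elementary statement above, and the nonconstant-slope hypothesis is visibly the sharp one (if $\psi_t'\equiv0$ then $y_t$ is forced constant, so no interpolation is possible when $\bar y_t(\eps)\ne\bar y_t(1-\eps)$).
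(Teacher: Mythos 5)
The paper does not actually prove Proposition~\ref{prop:stabhom}; it cites \cite{CV} for it. So there is no in-paper proof to compare against, and I assess your argument on its own terms.

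Your change of variables $g_t=G_te^{i\psi_t}$ (with $h_t'=|h_t'|e^{i\psi_t}$) is correct and is exactly the right move: it linearizes both conditions in~\eqref{eq:g}, turning $\langle g_t',ih_t'\rangle=0$ into $y_t'=-\psi_t'x_t$ and $\langle g_t,ih_t'\rangle>0$ into $y_t>0$, so the extension problem becomes: choose a free smooth $x_t$ on $[\eps,1-\eps]$, matching $\bar x_t$ at the ends, so that $y_t$ stays positive and hits the correct value at $1-\eps$; the only global constraint is $\int_\eps^{1-\eps}\psi_t'x_t=\bar y_t(\eps)-\bar y_t(1-\eps)$, and the hypothesis $\psi_t'\not\equiv0$ is exactly what makes this solvable. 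The sign trick $x_t\propto\chi_t\psi_t'$, so that $y_t$ is monotone across the adjustment region and hence stays between its positive boundary values, is also sound. This is a correct reduction.

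There is, however, a real (if fixable) gap in the gluing. As written, $x_t$ equals $\bar x_t$ on the buffers $[\eps,\eps_1]\cup[\eps_2,1-\eps]$ and equals $c_t\chi_t\psi_t'$ on $[\eps_1,\eps_2]$ with $\chi_t$ compactly supported in the interior; since $\chi_t$ vanishes near $\eps_1,\eps_2$ but $\bar x_t(\eps_1),\bar x_t(\eps_2)$ need not, the two formulas do not match and $x_t$ is discontinuous. You gesture at ``cutoffs gluing $x_t$ across the buffers'' in the final paragraph, but the positivity argument you gave (monotonicity of $y_t$ between positive boundary values $\bar y_t(\eps_1),\bar y_t(\eps_2)$) quietly assumed the clean buffer picture. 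With an actual cutoff interpolating $\bar x_t$ to $0$ near $\eps_1$ (and symmetrically near $\eps_2$), $y_t$ on the cutoff zone no longer agrees with $\bar y_t$ and the formula $M_t=\bar y_t(\eps_1)-\bar y_t(\eps_2)$ changes, so you must re-estimate: the error in $y_t$ introduced on the cutoff zone is $O(\eps_1-\eps)$ uniformly in $t$ and hence can be absorbed by the uniform lower bound on $\bar y_t$, and the corrected $M_t$ is still smooth in $t$ with $c_t$ well-defined provided $\psi_t'\not\equiv0$ persists on $[\eps_1,\eps_2]$ (which a compactness argument in $t$ gives for $\eps_1-\eps$, $1-\eps-\eps_2$ small). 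A related unstated point is that $\bar x_t,\bar y_t$ are only given on $[0,\eps]\cup[1-\eps,1]$, so speaking of them on the buffers requires first extending $\bar g_t$ smoothly a little past $\eps$ and $1-\eps$; this is routine but should be said. None of this threatens the strategy, but in the form written the construction does not yet produce a smooth $x_t$, and the positivity estimate is incomplete once it is repaired.
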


Finally, we will need the following simple lemma on immersions. Recall
that $\alpha_h$ is a positive contact form iff
\begin{equation}\label{eq:hh}
   h_1'h_2-h_2'h_1>0. 
\end{equation}

\begin{lemma}\label{lem:adj}
Let $h=(h_1,h_2):[-\delta,\delta]\to\C$ be an immersion satisfying the
contact condition~\eqref{eq:hh} and $h_1'>0$. Then:

(a) For any constant $B\geq 0$ the immersion $h+(0,B)$
satisfies~\eqref{eq:hh}. 

(b) If in addition $h_2(0)>0$, then for every $\eps>0$ there exists a
constant $A\in \R$ such that for all $t\in [0,1]$ the immersion 
$h+t(A,0)$ satisfies~\eqref{eq:hh} near $r=0$, and
$(h_1+A)(0)\in (0,\eps)$. 
\end{lemma}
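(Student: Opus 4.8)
The plan is to handle both parts by a direct computation of the contact condition~\eqref{eq:hh}: since adding a constant vector to $h$ leaves $h'$ unchanged (so the modified map is again an immersion with positive first-component derivative), the only thing to verify in each case is the inequality $h_1'h_2-h_2'h_1>0$ for the modified immersion.

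For (a) I would just expand~\eqref{eq:hh} for $h+(0,B)=(h_1,h_2+B)$:
$$
   h_1'(h_2+B)-h_2'h_1=(h_1'h_2-h_2'h_1)+Bh_1'.
$$
The first summand is positive by hypothesis and the second is nonnegative since $B\geq 0$ and $h_1'>0$, so the sum is positive on all of $[-\delta,\delta]$.

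For (b) I would set $A:=-h_1(0)+\eps'$ with a constant $\eps'\in(0,\eps)$ still to be fixed; this already gives $(h_1+A)(0)=\eps'\in(0,\eps)$ as required. It then remains to see that $F(r,t):=h_1'(r)h_2(r)-h_2'(r)\bigl(h_1(r)+tA\bigr)$ is positive for $r$ near $0$, uniformly in $t\in[0,1]$. The key point is that at $r=0$ one has $h_1(0)+tA=(1-t)h_1(0)+t\eps'$, so $t\mapsto F(0,t)$ is affine and therefore attains its minimum over $[0,1]$ at an endpoint. At $t=0$, $F(0,0)=h_1'(0)h_2(0)-h_2'(0)h_1(0)>0$, which is exactly the contact condition for $h$ at $r=0$; at $t=1$, $F(0,1)=h_1'(0)h_2(0)-h_2'(0)\eps'$, which is positive once $\eps'$ is taken small enough (this is where $h_2(0)>0$, hence $h_1'(0)h_2(0)>0$, is used; no smallness of $\eps'$ is required when $h_2'(0)\leq 0$). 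With $\eps'$ so fixed, $F(0,\cdot)>0$ on the compact set $[0,1]$, and the tube lemma produces a $\delta'>0$ with $F>0$ on $[-\delta',\delta']\times[0,1]$, i.e.~$h+t(A,0)$ satisfies~\eqref{eq:hh} near $r=0$ for every $t\in[0,1]$.

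The single subtlety worth flagging is that in (b) the constant $A$ is essentially pinned down by the normalization $(h_1+A)(0)\in(0,\eps)$, so it cannot be taken small and a naive perturbation argument is unavailable; what makes the verification painless anyway is the affine dependence of $F(0,t)$ on $t$, which reduces everything to the two endpoints $t=0$ and $t=1$.
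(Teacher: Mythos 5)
Your proposal is correct and follows essentially the same route as the paper: part (a) is the identical one-line computation, and part (b) is the same choice of $A$ pinning $(h_1+A)(0)$ to a small positive value, followed by the observation that the contact expression is affine in $t$ so that positivity at the endpoints $t=0,1$ suffices, and continuity/compactness to pass from $r=0$ to a neighbourhood. The paper phrases this as ``linear interpolation'' and is terser, but the content is the same.
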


\begin{proof}
(a) Condition~\eqref{eq:hh}, $h_1'>0$ and $B\geq 0$ imply
$$
   h_1'(h_2+B)-h_2'h_1 = (h_1'h_2-h_2'h_1) + h_1'B > 0.
$$
(b) At $r>0$ we have $h_1'h_2>0$ by assumption, so for sufficiently
small $\delta\in(0,\eps)$
$$
   0 < h_1'h_2 - h_2'\delta = h_1'h_2-h_2'(h_1+A)
$$
with $A:=\delta-h_1(0)$. Linear interpolation from the right hand side
to \\$h_1'h_2-h_2'h_1>0$ yields $h_1'h_2-h_2'(h_1+tA)>0$ for all
$t\in[0,1]$. 
\end{proof}

After these preparations, we now turn to the 

\begin{proof}[Proof of Theorem~\ref{thm:Giroux}] 
Let $(\om,\lambda)$ be a SHS obtained after application of
Corollary~\ref{cor:structure2}. We will use the following terminology:
$N^\pm$ and $N^0$ are called positive/negative contact parts and the
flat parts; connected components of $N^\pm$ and $N^0$ will be called
regions. We will construct the stable homotopy and the supporting open
book successively on the different types of regions. 

{\bf Flat regions. }
Consider a flat region $N$ with the primitive integer $1$-form
$\bar\lambda$ provided by Corollary~\ref{cor:structure2}. 
Integration of $\bar\lambda$ over paths from a fixed base point yields
a fibration
\begin{equation}\label{eq:fibflat}
   \pi:N\to S^1 
\end{equation}
such that $d\pi=\bar\lambda|_N$. Let $k_i\gamma_i$ denote the
restriction of the cohomology class $[\bar\lambda|_N]$ to the boundary
component $T_i$ of $N$, where $\gamma_i\in H^1(T_i;\Z)$ is a primitive
integer cohomology class and $k_i\in\N$ is the multiplicity. Since
$\bar\lambda$ is $T^2$-invariant near $T_i$, there exist coordinates
near $T_i$ in which the projection is given by
$\pi(r,\phi,\theta)=k_i\theta$.  

{\bf Contact regions. }
Now let $N$ be a contact region, so $\om=c\,d\lambda$ on $N$ for some
constant $c\neq 0$. After possibly switching the orientation of $N$,
we may assume that $\lambda$ is a {\em positive} contact form (but $c$
may still be negative). 
Let $[-\delta,\delta]\times T^2$ be a tubular neighbourhood of
a boundary component $T_i=\{0\}\times T^2$ of $N$ which is contained
in an integrable region (such that $[0,\delta]\times T^2\subset N$). 
On this neighbourhood the SHS $(\om,\lambda)$ is given by
$(c\,d\alpha_h,\alpha_h)$ for some immersion $h:[-\delta,\delta]\to\C$ 
satisfying the contact condition~\eqref{eq:h}. 
After a perturbation of $h$ supported near $r=0$ we may assume that
the cohomology class $[\alpha_h|_{T_i}]\in H^1(T_i;\R)$ is rational. Let
$\gamma_i$ be the primitive integer cohomology class in $H^1(T_i;\Z)$
positively proportional to $[\alpha_h|_{T_i}]$. We choose linear
coordinates $(\phi,\theta)$ on $T^2$ in which
$\gamma_i=[d\theta]$. Then the positive contact condition yields  
$d\theta\wedge d\alpha_h>0$ near $r=0$. Since
$\alpha_h=h_1(r)d\phi+h_2(r)d\theta$, the immersion
$h=(h_1,h_2)$ satisfies $h_1(0)=0$, $h_1'(0)>0$, and $h_2(0)>0$. So
after a perturbation of $h$ supported near $r=0$ we may assume that
$h=(r,a)$ near $r=0$ with a constant $a>0$. By a further
deformation (keeping the contact condition) supported near
$r=0$ we can achieve that $h=(r,1)$ near $r=0$. After performing these
deformations near all boundary components of $N$, we can apply the easy
case of Proposition~\ref{prop:relGiroux} in which condition (v) (with
$l_i=0$) holds near all boundary components. It yields a homotopy of
positive contact forms $\lambda_t$ on $N$ such that
$\lambda_t=\lambda$ near $\p N$, and $\lambda_1$ is supported by an
open book $(B,\pi)$ with $\pi(r,\phi,\theta)=n_i\theta$ near each
boundary component $T_i$. We obtain a corresponding stable homotopy
$(\om_t,\lambda_t)$ by setting $\om_t:=c\,d\lambda_t$ with the
constant $c\neq 0$ from above.  

{\bf Introducing small contact regions. }
It remains to consider an integrable region $[a,b]\times T^2$. The
open book projections on the adjacent contact/flat regions constructed
above provide primitive integer cohomology classes $\gamma_a\in
H^1(T^2,\Z)$ near $a$ and $\gamma_b\in H^1(T^2,\Z)$ near $b$. Note
that in general we will have $\gamma_a\neq\gamma_b$, in which case the
open book projections $\pi$ given near the boundary of $[a,b]\times
T^2$ do not extend over $[a,b]\times T^2$. To deal with this, we
choose a subdivision  
\begin{equation}\label{eq:subdivv}
   a=r_0<r_1<\dots<r_n=b
\end{equation}
and a sequence $\{\gamma_k\}_{k=1,\dots,n}$ of primitive integer
cohomology classes in $H^1(T^2,\Z)$ with the following properties:  
\begin{itemize}
\item $\gamma_1=\gamma_a$ and $\gamma_{n}=\gamma_b$; 
\item on the interval $[r_{k-1},r_{k}]$ ($k=1,\dots,n$) we have 
$\bar\gamma_k\wedge \om>0$, where $\bar\gamma_k=p_kd\phi+q_kd\theta$
is the $T^2$-invariant representative of $\gamma_k$.  
\end{itemize}
Recall that, according to Corollary~\ref{cor:structure2}, the
proportionality factor $f=d\lambda/\om$ is nowhere zero on
$[a,b]\times T^2$. We apply Proposition~\ref{prop:thick} (with
$Z=\{a\}=\{a_k\}$ and followed by averaging) to the level sets
$a_k=f(r_k)$, $k=1,...,n-1$, to find a new 
$T^2$-invariant stabilizing 1-form $\tilde\lambda$ for $\om$ such
that $\tilde f:=d\tilde\lambda/\om$ satisfies $\tilde f(r)\equiv
a_k\neq 0$ on some intervals $[r_k-\delta_k,r_k+\delta_k]$. We rename
$\tilde\lambda,\tilde f$ back to $\lambda,f$. We will refer to the
regions $\{r\in[r_k-\delta_k,r_k+\delta_k]$ as {\em small contact
  regions} in order to distinguish them from the original ({\em
  large})  
contact or flat regions constructed above. It is important to note
that adjacent small contact regions have the same sign, i.e.~the
contact structures are either both positive or both negative. 

We apply Proposition~\ref{prop:relGiroux} to construct contact
homotopies and suporting open books on the small contact regions. 
To extend them over the integrable regions $[r_{k-1},r_k]\times
T^2$, we distinguish two cases: integrable regions $[a,r_1]\times T^2$
and $[r_{n-1},b]\times T^2$ connect a large contact/flat region to a
small contact region, and regions $[r_{k-1},r_{k}]\times T^2$,
$k=2,...,n-1$, connecting two small contact regions. Recall that on
each such region we have $\bar\gamma_k\wedge\om>0$ for some
$T^2$-invariant 1-form on $T^2$ representing a primitive integer
cohomology class. After a linear change of coordinates we may assume
that $\bar\gamma_k=d\theta$ and thus 
$$
   d\theta\wedge\om>0. 
$$

{\bf Integrable regions I. }
Consider an integrable region $[a,r_1]\times T^2$ connecting a large
contact/flat region $N$ with a small contact region
$[r_1-\delta,r_1+\delta]\times T^2$. (The region
$[r_{n-1},b]\times T^2$ can be treated analogously). 

In both the contact and flat case, the stable homotopy on $N$
constructed above was constant near $\{r=a\}$. So in order to
extend the homotopy over $[a,r_1]$ we need to arrange rotation number
zero at $r=r_1-\delta$ in the application of 
Proposition~\ref{prop:relGiroux} to the small contact region. 
To achieve this, we prepare the stabilizing 1-form $\lambda$ before
applying Proposition~\ref{prop:relGiroux}. 

We write $\om=d\alpha_h$ and $\lambda=\lambda_g$
for an functions $h=(h_1,h_2):[a,b]\to \C$ and 
$g=(g_1,g_2):[a,b]\to \C$ satisfying~\eqref{eq:g}. 
We may assume that $\lambda$ is a positive contact form on
$[r_1-\delta,r_1+\delta]\times T^2$. (Otherwise we make the
orientation reversing coordinate change $\Psi:\phi\mapsto -\phi$ and
replace $(\om,\lambda)$ by $(-\Psi^*\om,\Psi^*\lambda)$).  
So we have $h(r)-cg(r)\equiv\const\in\C$ on $[r_1-\delta,r_1+\delta]$  
for some $c>0$. 

Fix some $\eps>0$ that will be specified later. 
We choose $B\ge 0$ such that $g_2(0)+B>0$. By Lemma~\ref{lem:adj} (a)
the homotopy $\{g+t(0,B)\}_{t\in [0,1]}$ satisfies the contact
condition on $[r_1-\delta,r_1+\delta]$. By Lemma~\ref{lem:adj} (b) we
find $A\in \R$ and a homotopy $\{g+(0,B)+t(A,0)\}_{t\in [0,1]}$ of
contact immersions on $[r_1-\delta,r_1+\delta]$ (for a possibly smaller
$\delta$) such that $(g_1+A)(r)\in(0,\eps)$ for all $r$. Denote by $g^t$,
$t\in[0,1]$, the concatenation of these two contact homotopies on
$[r_1-\delta,r_1+\delta]$ and note that $c(g^t)'=h'$ for all $t$, so the
$g^t$ satisfy~\eqref{eq:g} for all $t$. We use
Proposition~\ref{prop:stabhom} to extend this homotopy to $[a,b]$ such
that it satisfies~\eqref{eq:g} and coincides with $g$ outside a
neighbourhood of $[r_1-\delta,r_1+\delta]$. We rename the new
stabilizing function $g^1$ back to $g$, so we have achieved that $g$
is a contact immersion on $[r_1-\delta,r_1+\delta]$ with first
component $g_1\in(0,\eps)$. Moreover, we still have
$h(r)-cg(r)\equiv\const\in\C$ on $[r_1-\delta,r_1+\delta]$ with $c>0$
as above. (Note that we may have destroyed positivity of
$f=d\lambda/\om$, but we will not need this any more). 

Now we apply Proposition~\ref{prop:relGiroux} to the small contact
region $(N,\alpha)=([r_1-\delta,r_1+\delta]\times T^2,\lambda_g)$ to
find an open book decomposition $(B,\pi)$ and a contact homotopy
$\lambda_t$. Write $\lambda_t=\lambda_{g^t}$ near the boundary torus
$T_i=\{r_1-\delta\}\times T^2$. Proposition~\ref{prop:relGiroux} (iv)
(with $h^t=g^t$ and $l_i=r_1-\delta$) and $g_1\in(0,\eps)$ implies
that the first component of $g^1$ satisfies $g^1_1\in(0,\eps)$. 
Moreover, we can choose $(g^t)'(r_1-\delta)$, $t\in[0,1]$, to have
rotation number zero. 

We extend $\lambda_t$ to a stable homotopy $(\om_t,\lambda_t)$ on
$[r_1-\delta,r_1+\delta]\times T^2$ by $\om_t:=cd\lambda_t$. Near
$r=r_1-\delta$ we have $\om_t=\om_{h^t}$ with immersions $h^t$,
$t\in[0,1]$, defined by $h^t(r)-cg^t(r)\equiv\const\in\C$. Thus the
first components satisfy $|h_1^1-h_1| = c|g_1^1-g_1| <
c\eps$ near $r=r_1-\delta$. Since $h_1'>0$ on $[a,r_1]$, for
sufficiently small $\eps$ we can extend $h^1$ from a neighbourhood of
$r_1-\delta$ to an immersion on $[a,r_1-\delta]$ which coincides with
$h$ near $r=a$ and whose first component satisfies $(h_1^1)'>0$
everywhere. Moreover, since $(h^t)'(r_1-\delta)$, $t\in[0,1]$, has
rotation number zero, we can extend the $h^t$ from a neighbourhood of
$r_1-\delta$ to immersions on $[a,r_1-\delta]$ which coincide with
$h$ near $r=a$ and satisfy $h^0=h$. Now we use
Proposition~\ref{prop:stabhom} to extend the $g^t$ from a
neighbourhood of  $r_1-\delta$ to functions on $[a,r_1-\delta]$ which
coincide with $g$ near $r=a$ such that $g^0=g$ and $(h^t,g^t)$
satisfies~\eqref{eq:g} for all $t\in[0,1]$. Thus
$(\om_t,\lambda_t)=(d\alpha_{h^t},\lambda_{g^t})$ is a stable homotopy
on $[a,r_1-\delta]\times T^2$, which coincides with the previously
constructed homotopies near the boundary, from
$(\om_0,\lambda_0)=(\om,\lambda)$ to $(\om_1,\lambda_1)$ satisfying
$d\theta\wedge \om_1>0$.  

{\bf Integrable regions II. }
It remains to consider an integrable region $[r_{k-1},r_k]\times T^2$
connecting two small contact regions. Recall that the two contact
regions have the same sign, so we may assume that they are both
positive. (Otherwise we make the orientation reversing 
coordinate change $\Psi:\phi\mapsto -\phi$ and replace $(\om,\lambda)$
by $(-\Psi^*\om,\Psi^*\lambda)$). 

Let us ignore for the moment the cohomology classes of $\om_t$, which
we will discuss below. Then we conclude the argument as follows. 
We apply Proposition~\ref{prop:relGiroux} to both small contact
regions, choosing the option ``rotation number $-1$'' in
Proposition~\ref{prop:relGiroux} (iv) at both boundary tori
$T_{k-1}=\{r_{k-1}+\delta\}\times T^2$ and $T_k=\{r_k-\delta\}\times
T^2$. Note that the chosen coordinates $(r,\phi,\theta)$ on
$[r_{k-1},r_k]\times T^2$ are related to the coordinates in
Proposition~\ref{prop:relGiroux} by the identity map near $T_{k-1}$,
and (for example) by the map $(r,\phi,\theta)\mapsto
(-r,-\phi,\theta)$ near $T_k$. Thus, in the integrable region 
coordinates $(r,\phi,\theta)$ the rotation number equals $-1$ at
$T_{k-1}$ and $+1$ at $T_k$. (Here it is crucial that no positive and
negative contact regions are connected by an integrable region!). 
Thus, the homotopy of Hamiltonian structures $\om_t$ given on the
small contact regions extends over the region $[r_{k-1},r_k]\times
T^2$ (by extending the corresponding immersions $h^t$) such that
$\om_0=\om_1=\om$. We use Proposition~\ref{prop:stabhom} extend the
$\lambda_t$ from the contact regions to stabilizing 1-forms over
$[r_{k-1},r_k]\times T^2$. Note that
$d\theta\wedge\om_1=d\theta\wedge\om_0>0$ on $[r_{k-1},r_k]\times
T^2$.  

{\bf Defining the open book. }
The preceding step finishes the construction of the homotopy of SHS 
$(\om_t,\lambda_t)$, $t\in[0,1]$, on $M$. It remains to define the
open book $(B,\pi)$ supporting $(\om_1,\lambda_1)$. For this, recall
that in Proposition~\ref{prop:relGiroux} (ii) we have the freedom to
prescribe the multiplicities $n_i\geq K$ of the open book projection
near each boundary component $T_i$ of a contact region, for some
constant $K$ depending on this region. Let $K_0$ be the maximum of
these constants $K$ over all (large and small) contact regions.  
We set the multiplicities at all boundary tori of contact regions
equal to $K_0$, except for those adjacent to flat regions. 
For the latter the choice can be made as follows. Let
$[a,b]\times T^2$ be an integrable region with $T_a=\{a\}\times T^2$ 
contained in a flat region $N$ and $\{b\}\times T^2$ in a (small)
contact region $N^c$ (the opposite case is analogous). By the
discussion following equation~\eqref{eq:fibflat}, there is a primitive
cohomology class $\gamma_a\in H^1(T_a;\Z)$ and a multiplicity
$k_a\in\N$ such that $k_al_a=[d\pi|_{T_a}]$ for the projection
$\pi:N\to S^1$. We choose the multiplicity for
$T_b$ in Proposition~\ref{prop:relGiroux} to be the product $k_aK_0$.
Then the fibration $K_0\pi:N\to S^1=\R/\Z$ extends over the integrable
region and coincides with the one produced by
Proposition~\ref{prop:relGiroux} near $T_b$. With these choices, the
open book projections on the contact regions and the projections on
the flat regions extend over all the integrable regions to an open
book structure on $M$ supporting $(\om_1,\lambda_1)$. 

{\bf Preserving the cohomology class. }
The discussion so far finishes the proof of
Theorem~\ref{thm:Giroux} except for vanishing of the cohomology
classes $[\om_t-\om]\in H^2(M;\R)$. Let us analyze how we can ensure 
exactness in the previous constructions. 
On a flat region we have $\om_t=\om$. On a (large or small) contact
region $N$ we have $\om=c\,d\lambda$ for some constant $c\neq 0$. We
change $\lambda$ by a contact homotopy $\lambda_t$ and define $\om_t$
by $\om_t=c\,d\lambda_t$. 
Next consider an integrable region $U=[a,b]\times T^2$ connecting two
contact/flat regions $N_0,N_\ell$ on which $\om=c_id\lambda$
for constants $c_0,c_\ell\neq 0$. Moreover,
suppose that we have several small integrable regions
$N_i=[r_i-\delta,r_i+\delta]\times T^2$ in $U$ on which $\om=c_id\lambda$
for constants $c_i\neq 0$, $i=1,\dots,\ell-1$. We change $\lambda$ by
a contact homotopy $\lambda_t$ and define $\om_t$ by
$\om_t=c_id\lambda_t$ on each $N_i$, $i=0,\dots,\ell$. Denote by
$\p^\pm N_i=\{r_i\pm\delta\}\times T^2$ the left resp.~right boundary
components for $i=1,\dots,\ell-1$ and set $\p^+N_0=\{a\}\times T^2$,
$\p^-N_\ell=\{b\}\times T^2$. On $U$ we can write 
$\lambda=\lambda_{g}$ and $\om=d\alpha_{h}$ for functions
$g,h:[a,b]\to\C$ satisfying~\eqref{eq:g}. So near each boundary
component $\p^\pm N_i$ we have a relation
\begin{equation}\label{eq:const}
   h-c_i g = k_i^\pm
\end{equation}
for some constants $k_i^\pm\in\C$. Moreover, near $\p^\pm N_i$ we can
write $\lambda_t=\lambda_{g^t}$ for locally defined functions $g^t$. 
Now suppose that we find a family of immersions $h^t:[a,b]\to\C$ and a
family of constants $k^t\in\C$ such that $h^0=h$, $k^0=0$ and 
\begin{equation}\label{eq:exact}
   h^t = 
   \begin{cases}
      c_ig^t + k_i^\pm & \text{ near }r=r_i\pm\delta \\
      c_0g^t + k_0^+ + k^t & \text{ near }r=a \\
      c_\ell g^t + k_\ell^- + k^t & \text{ near }r=b. 
   \end{cases}
\end{equation}
Then we obtain an {\em exact} homotopy of HS $\om_t=\om+d\beta_t$ on
$U$, extending the given one on $\cup_iN_i$ and $T^2$-invariant on
$\cup_iN_i$, by setting 
\begin{equation*}
   \beta_t := 
   \begin{cases}
      c_0(\lambda_t-\lambda) & \text{ on }N_0 \\
      c_\ell(\lambda_t-\lambda) & \text{ on }N_\ell \\
      c_i(\lambda_t-\lambda) - \alpha_{k^t} & \text{ on }N_i,\
      i=1,\dots,\ell-1 \\ 
      \alpha_{h^t} - \alpha_h - \alpha_{k^t} & \text{ on
      }U\setminus\cup_iN_i. 
   \end{cases}
\end{equation*}
Finally, Proposition~\ref{prop:stabhom} provides a homotopy of
stabilizing 1-forms $\lambda_t$ for $\om_t$ on $U$, extending the
given one on $\cup_iN_i$ and $T^2$-invariant on $\cup_iN_i$. 
Note that in ``Integrable regions I'' above we arranged
condition~\eqref{eq:exact} with constant $k^t=0$, but in ``Integrable
regions II'' this will in general fail. 

{\bf The $\theta$ -- $\phi$ -- $\theta$ trick. }
Consider again an integrable region $[a,b]\times T^2$ connecting two
(small) positive contact regions $N_a,N_b$ with a SHS
$(\om,\lambda)=(d\alpha_h,\lambda_g)$ satisfying $d\theta\wedge\om>0$,
i.e.~$h_1'>0$. Near $r=a,b$ we are given a contact homotopy
$\lambda_t$ and define $\om_t$ by $\om_t=c_ad\lambda_t$
resp.~$\om_t=c_bd\lambda_t$, i.e.~  
$$
      h(r)-c_a g(r) = k_a \text{ resp. }h(r)-c_b g(r) = k_b
$$
for constant $c_a,c_b>0$ and $k_a,k_b\in\C$. We face the following
problem: In order to ensure exactness we wish to define the homotopy
$h^t:[a,b]\to\C$ satisfying~\eqref{eq:exact} near $r=a,b$, but then
the difference of the first components of 
$h^1(b)-h^1(a)=c_bg^1(b)+k_b-c_ag^1(a)-k_a$ may be negative and we
cannot achieve $(h^1_1)'>0$. To overcome this problem, we introduce
two new small contact regions $N_x=[x-\delta,x+\delta]\times T^2$ and
$N_y=[x-\delta,y+\delta]\times T^2$ around new subdivision points $x$ and $y$: 

$$
   a<x<y<b.
$$ 
Now we repeat the construction using the following open book
projections: 
\begin{equation}\label{eq:proj}
   \pi(r,\phi,\theta) := 
   \begin{cases}
      \theta & \text{ on }[a,x-\delta] \\
      \phi & \text{ on }[x+\delta,y-\delta] \\
      \theta & \text{ on }[y+\delta,b].
   \end{cases}
\end{equation}
It will turn out that, with this trick, we can use the freedom in
Proposition~\ref{prop:relGiroux} and Lemma~\ref{lem:adj} to achieve
exactness of $[\om_t-\om]$ as well as positivity of
$d\theta\wedge\om_1$ resp.~$d\phi\wedge\om_1$ on the respective
regions. This will be carried out in the remaining two steps. 

{\bf Preparation for the exact homotopy. }
We first homotop the immersion $h:[a,b]\to \C$ through immersions rel
$\p[a,b]$ to one (still denoted by $h$) which satisfies
$$
   h(r) = e^{-i(r-x+\pi/4)}
$$
on some small subinterval $[x-\delta,y+\delta]\subset(a,b)$, see
Figure \ref{figure:nose}. 

\begin{figure}
\centering
\includegraphics{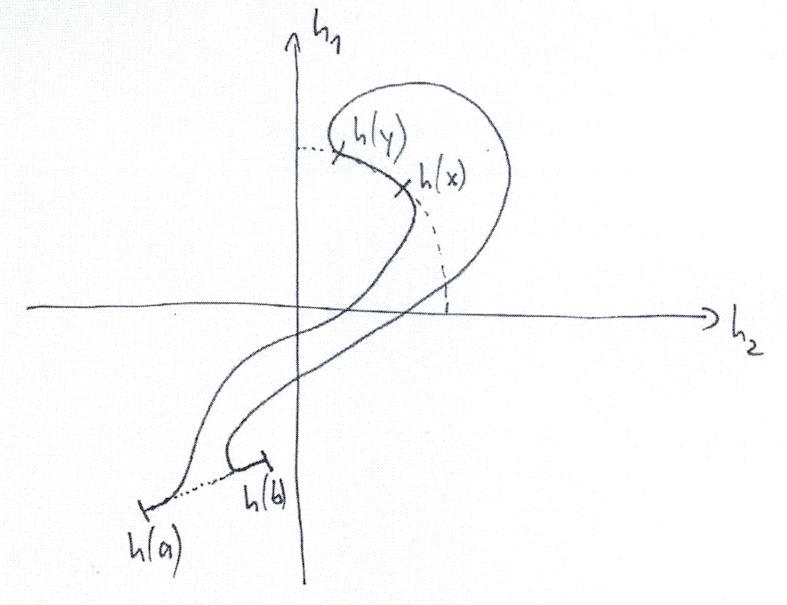}
\caption{The deformation of $h$}
\label{figure:nose}
\end{figure}

Hence $h|_{[x-\delta,y+\delta]}$ is contact and satisfies the conditions 
$$
   \hat h_1'>0,\qquad \hat h_2'<0,\qquad ih'=h. 
$$
Note that we may have lost the condition $(h_1)'>0$ outside the
interval \\$[x-\delta,y+\delta]$. The 1-form
$h_1(r)d\phi+h_2(r)d\theta$ stabilizes $d\alpha_h$ on
$[x-\delta,y+\delta]$. So by  
Proposition~\ref{prop:stabhom} we find a stabilizing form $\lambda_g$
for $\om=d\alpha_h$ on $[a,b]\times T^2$ which agrees with the
original $\lambda$ near the boundary and such that $g=h$ on
$[x-\delta,y+\delta]$.  

Next we use Lemma~\ref{lem:adj} and Proposition~\ref{prop:stabhom} to
modify the stabilizing 1-form $\lambda_g$ on the small contact regions
$N_x=[x-\delta,x+\delta]\times T^2$ and $N_y=[y-\delta,y+\delta]\times
T^2$ (possibly shrinking $\delta$ in the process).  
For this, we make the following orientation preserving coordinate
change on $[x-\delta,y+\delta]\times T^2$:
$$
   \hat\theta:=\phi,\quad \hat\phi:=-\theta.
$$
We rewrite $\lambda_g$ in the new coordinates,
$$
   \lambda_g = g_1d\phi+g_2d\theta = \hat g_1d\hat\phi+\hat
   g_2d\hat\theta,\qquad \hat g_1=-g_2,\quad \hat g_2=g_1
$$
and note that the contact condition $\la ig',g\ra>0$ is preserved by
this coordinate change. Moreover, by construction we have $(\hat
g_1)'=-g_2'=-h_2'>0$. Thus, by Lemma~\ref{lem:adj} (applied in the
coordinates $(\hat\phi,\hat\theta)$ near $x$ and $y$) and
Proposition~\ref{prop:stabhom}, we can add
complex constants to $g$ near $x,y$ to obtain a new stabilizing form,
still denoted by $\lambda_g$, which coincides with the previous one
outside a neighbourhood of $N_x\cup N_y$ and satisfies
\begin{equation}\label{eq:prep}
\begin{cases}
   0<\hat g_1<\eps \text{ and }g_1=\hat g_2>0 &\text{ on
   }[x-\delta,x+\delta], \\
   0<\hat g_1<\eps \text{ and }g_1=\hat g_2\geq A>0 &\text{ on
   }[y-\delta,y+\delta]
\end{cases}
\end{equation}
for an arbitrarily small constant $\eps$ and an arbitrarily large
constant $A$ that will be specified later. 

{\bf Constructing the exact homotopy. }
Now we apply Proposition~\ref{prop:relGiroux} to the contact regions
$N_a$, $N_b$, $N_x$ and $N_y$, with the open book projections on the
connecting integrable regions given by $\theta$
resp.~$\hat\theta=\phi$ as in~\eqref{eq:proj}. We choose the option
``rotation number -1'' in Proposition~\ref{prop:relGiroux} (iv) at all
boundary tori.
Recall that near the boundary component $\p^\pm N_x,\p^\pm
N_y,N_a,N_b$ we have the respective relations
\begin{equation*}
   h - g = k_x^\pm,\quad 
   h - g = k_y^\pm,\quad 
   h - c_ag = k_a,\quad 
   h - c_bg = k_b 
\end{equation*}
for some constants $c_a,c_b>0$ and
$k_x^\pm,k_y^\pm,k_a,k_b\in\C$. Moreover, near each boundary torus the
homotopy of contact forms $\lambda_t$ from
Proposition~\ref{prop:relGiroux} can be written as $\lambda_{g^t}$ for
locally defined functions $g^t$. We define a family of immersions
$h^t$ near the boundary tori as in~\eqref{eq:exact} with $k^t=-t(K,0)$,
i.e.~  
\begin{equation}\label{eq:exact2}
   h^t = 
   \begin{cases}
      g^t + k_x^\pm & \text{ near }r=x\pm\delta \\
      g^t + k_y^\pm & \text{ near }r=y\pm\delta \\
      c_ag^t + k_a - t(K,0) & \text{ near }r=a \\
      c_bg^t + k_b - t(K,0) & \text{ near }r=b,
   \end{cases}
\end{equation}
where $K\geq 0$ is a real constant that will be chosen below. 
We need to extend $h^t$ to a family of immersions $[a,b]\to\C$ 
such that $h^0=h$ and $h^1$ is supported by the respective open books,
i.e.~the $d\phi$-component $h^1_1$ increases on
$[a,x-\delta]\cup[y+\delta,b]$ and the $d\hat\phi$-component $\hat
h^1_1=-h^1_2$ increases on $[x+\delta,y-\delta]$. Note that, once we
have constructed $h^1$ with these properties, the extension of the
homotopy $h^t$ follows as in ``Integrable regions II'' from our choice
of rotation numbers. 

First, we consider the interval $[x+\delta,y-\delta]$ in coordinates
$(\hat\phi,\hat\theta)$.  Near $x+\delta$ and $y-\delta$ we
have $\hat g_1\in(0,\eps)$ according to~\eqref{eq:prep}, hence $\hat
g^1_1\in(0,\eps)$ by Proposition~\ref{prop:relGiroux} and $\hat
h^1_1-\hat h_1\in(0,\eps)$ by~\eqref{eq:exact2}. Since $\hat h_1=-h_2$
strictly increases on $[x,y]$, it follows that $\hat
h^1_1(y-\delta)>\hat h^1_1(x+\delta)$, so we can extend the immersion
$h^1$ over $[x+\delta,y-\delta]$ so that $\hat h^1_1$ is strictly
increasing. 

Next, we consider the interval $[a,x-\delta]$ in coordinates
$(\phi,\theta)$. According to~\eqref{eq:exact2}, the $d\phi$-component
of $h^1$ changes over this interval by the amount
$$h^1_1(x-\delta)-h^1_1(a)=C+K\in\R,$$ where 
$$
   C := g^1_1(x-\delta)+(k_x^-)_1-c_ag^1_1(a)-(k_a)_1 \in \R 
$$
depends on the functions $h,g$ constructed above near $a$ and
$x-\delta$  (but not on the constant $A$ in~\eqref{eq:prep}!). We
choose the constant $K\geq 0$ in~\eqref{eq:exact2} large enough such
that $C+K>0$, so we can extend the immersion $h^1$ over
$[a,x-\delta]$ with $h^1_1$ is strictly increasing.  

Finally, we consider the interval $[y+\delta,b]$ in coordinates
$(\phi,\theta)$. Recall that near the boundary torus $\{r=y+\delta\}$
the coordinates $(r,\phi,\theta)$ on the integrable region $[a,b]\times
T^2$ are related to the coordinates of
Proposition~\ref{prop:relGiroux} by the coordinate change
$(r,\phi,\theta)\mapsto (-r,-\phi,\theta)$.    
In the coordinates of Proposition~\ref{prop:relGiroux} the $d\phi$-
component of the stabilizing form $\lambda$ at $y+\delta$ changes from
the (very negative) value $-g_1(y+\delta)\leq -A$ (here we
use~\eqref{eq:prep}!) to a positive value $-g^1_1(y+\delta)>0$, so the
value increases by at least $A$. Switching back to the integrable
region coordinates $(r,\phi,\theta)$, we see a decrease by at least
$A$:  
\begin{equation}\label{eq:y}
   g^1_1(y+\delta)-g_1(y+\delta) < -A.
\end{equation}
The $d\phi$-component of $h^1$ changes over the interval
$[y+\delta,b]$ by the amount 
$$
   h^1_1(b)-h^1_1(y+\delta) 
   = [h^1_1(b)-h_1(b)] + [h_1(b) - h_1(y+\delta)] + [h_1(y+\delta) -
   h^1_1(y+\delta)]. 
$$
By~\eqref{eq:exact2} the first term in $[\ ]$ on the right hand side
equals 
$$
   h^1_1(b)-h_1(b) = c_b\bigl(g^1_1(b)-g_1(b)\bigr) - K, 
$$
which depends on the functions $h,g$ constructed above near $b$ and
the constant $K$ chosen above, but not on the constant $A$. 
The second term depends on the function $h$ near $y+\delta$ and $b$,
but not on $g$ and hence not on the constant $A$. The third term is
estimated using~\eqref{eq:exact2} and~\eqref{eq:y} by
$$
   h_1(y+\delta) - h^1_1(y+\delta) = g_1(y+\delta) - g^1_1(y+\delta) >
   A. 
$$
Therefore, by choosing the constant $A\geq 0$ large enough we can
achieve that $h^1_1(b)-h^1_1(y+\delta)>0$, so we can extend the
immersion $h^1$ over $[y+\delta,b]$ with $h^1_1$ is strictly increasing. 
This concludes the proof of Theorem~\ref{thm:Giroux}.
\end{proof}


\end{document}